\newcommand*\patchAmsMathEnvironmentForLineno[1]{%
	\expandafter\let\csname old#1\expandafter\endcsname\csname #1\endcsname
	\expandafter\let\csname oldend#1\expandafter\endcsname\csname end#1\endcsname
	\renewenvironment{#1}%
	{\linenomath\csname old#1\endcsname}%
	{\csname oldend#1\endcsname\endlinenomath}}%
\newcommand*\patchBothAmsMathEnvironmentsForLineno[1]{%
	\patchAmsMathEnvironmentForLineno{#1}%
	\patchAmsMathEnvironmentForLineno{#1*}}%
\@date \else {\vskip3ex \centering\footnotesize\@date\par\vskip1ex}\fi
\else \@footnotetext{\@setdate}\fi}
\newtheorem{theorem}{Theorem}[section]
\newtheorem{lemma}[theorem]{Lemma}
\newtheorem{proposition}[theorem]{Proposition}
\newtheorem{corollary}[theorem]{Corollary}
\theoremstyle{definition}
\newtheorem{define}[theorem]{Definition}
\newtheorem{remark}[theorem]{Remark}
\newcommand\ba[1]{\begin{align}\label{#1}}
	\newcommand\ea{\end{align}}
\newcommand\bas{\begin{align*}}
	\newcommand\eas{\end{align*}}
\newcommand\ee{\end{equation}}
\newcommand\be{\begin{equation}}
\newcommand\ees{\end{equation*}}
\newcommand\bes{\begin{equation*}}
\mathchardef\emptyset="001F
\mathchardef\emptyset="001F
\newcommand{\e}{\varepsilon}
\newcommand{\Om}{\Omega}
\newcommand{\R}{{\mathbb R}}
\newcommand{\rn}{{{\R}^N}}
\newcommand{\wto}{\rightharpoonup}
\newcommand{\wtos}{\mathrel{\mathop{\rightharpoonup}\limits^*}}
\newcommand{\N}{{\mathbb{N}}}
\newcommand{\wtogs}{\mathrel{\mathop{\to}\limits^{\text {s-s}}}}
\newcommand{\wtogsp}{\mathrel{\mathop{\to}\limits^{\text{s-s }(p)}}}
\newcommand\norm[1]{\left\|#1\right\|}
\newcommand{\abs}[1]{\left\lvert#1\right\rvert} 
\newcommand{\fsp}[1]{\left(#1\right)} 
\newcommand{\fmp}[1]{\left[#1\right]}
\newcommand{\flp}[1]{\left\{#1\right\}}
\newcommand{\vp}{\varphi}
\newcommand{\limn}{\lim_{n\rightarrow\infty}}
\newcommand{\lime}{\lim_{\e\rightarrow0}}
\newcommand{\pd}{{\partial}}
\newcommand{\divg}{{\operatorname{div}}}
\newcommand{\seqn}[1]{\left\{#1\right\}}
\newcommand{\liminfn}{{\liminf_{n\to\infty}}}
\newcommand{\ir}{{\lfloor r\rfloor}}
\newcommand{\IM}{{\operatorname{IM}}}
\newcommand\M{\mathbb M}
\definecolor{CMUred}{RGB}{153,0,0}
\definecolor{CMUgreen}{RGB}{0,135,81}
\definecolor{CMUblue}{RGB}{0,51,127}
\definecolor{Pblue}{RGB}{87,158,208}
\newcommand{\argmin}{{\operatorname{arg\,min}}}
\newcommand{\hnmo}{{\mathcal H^{N-1}}}
\newcommand\mb{{\mathcal{M}_b}}
\newcommand{\magenta}{ \color{black} }
\def\argmin{\mathop{\rm arg\, min}}
\DeclareRobustCommand{\Om}{\Omega}
\numberwithin{equation}{section}
\newcommand{\normmm}[1]{{\left\vert\kern-0.25ex\left\vert\kern-0.25ex\left\vert #1 
		\right\vert\kern-0.25ex\right\vert\kern-0.25ex\right\vert}}
\newcommand{\BLK}{\color{black}}
\definecolor{CMUred}{RGB}{153,0,0}
\definecolor{CMUgreen}{RGB}{0,135,81}
\definecolor{CMUblue}{RGB}{0,51,127}
\definecolor{Pblue}{RGB}{87,158,208}
\newcommand{\ellp}{{\ell^p}}
\def\argmin{\mathop{\rm arg\, min}}
\DeclareRobustCommand{\Om}{\Omega}
\numberwithin{equation}{section}
\title{Real order total variation with applications to the loss functions in learning schemes}
\author[P. Liu] {Pan Liu}
\address[Pan Liu]{Medical Big Data Research Center, \\
	Chinese PLA General Hospital,\\
	Beijing 100853, China}
\email[P. Liu] {dragonrider.liupan@gmail.com}
\author[X.Y. Lu] {Xin Yang Lu}
\address[Xin Yang Lu]{Department of Mathematical Sciences,\\
	 Lakehead University,
	955 Oliver Road, Thunder Bay, ON, Canada}
\email[X. Lu] {xlu8@lakeheadu.ca}
\author[K. He]{Kunlun He}
\address[Kunlun He]{(corresponding author)\\
	Medical Big Data Research Center,\\
	Chinese PLA General Hospital,\\
	Beijing 100853, China}
\email[K. He]{kunlunhe@plagh.org}
\subjclass[2020]{26B30, 94A08, 	47J20}
\keywords{total variation, fractional derivative,  calculus of variations}
\date{}                                           
\begin{document}
	
\begin{abstract}
	 Loss function are an essential part in modern data-driven approach, such as bi-level training scheme and machine learnings.
	 In this paper we propose a loss function
	 consisting of a $r$-order (an)-isotropic total variation semi-norms $TV^r$,
	 $r\in \R^+$, defined via the Riemann-Liouville (R-L) fractional derivative. We focus on studying key theoretical properties, such as the lower semi-continuity and compactness with respect to both the function
	and the order of derivative $r$, of such loss functions.
\end{abstract}

\maketitle
\tableofcontents

\thispagestyle{empty}

\section{Introduction}\label{sec:intro}

Medical image processing is a crucial step in routine clinical diagnoses, which includes tasks such as image denoising,
image segmentation and reconstruction. Methods for such tasks can be roughly classified into two classes, based on the approach: one is the class of structure-driven
approaches such as variational method proposed in \cite{rudin1992nonlinear}; the other is the class of data-driven approaches such as bi-level learning scheme and deep learning method \cite{DBLP:conf/nips/KrizhevskySH12,DBLP:journals/corr/SimonyanZ14a,DBLP:conf/cvpr/SzegedyLJSRAEVR15,DBLP:conf/cvpr/HeZRS16,DBLP:conf/cvpr/HuangLMW17,DBLP:conf/nips/VaswaniSPUJGKP17} which becomes more popular in recent image processing industry.\\

We take image denoising task as example: let $u_\eta$ be a corrupted image, one of the classical structure-driven approach, the \emph{ROF} model
(see \cite{rudin1992nonlinear})
	shown that the task of computing
	a denoised, or reconstructed, image from a noisy image $u_\eta$ is equivalent to solving the problem
\be\label{min_rof}
u_\alpha := \argmin\flp{\norm{u-u_\eta}_{L^2(Q)}^2+\alpha TV(u),\,\, u\in BV(Q)},
\ee
where $Q:=(0,1)\times(0,1)$ denotes a squared image and $BV(Q):=\flp{u\in L^1(Q):\,\, TV(u)<+\infty}$, where 
\be
 TV(u):=\sup\flp{\int_Q u \text{ div} \varphi \,dx:\varphi\in C_c^\infty(Q)}
 \ee
 is the total variation semi-norm defined in \cite{evans2015measure}.\\\\
Then, the minimizer $u_{\alpha}$ in \eqref{min_rof} is the denoised image for a given $\alpha>0$. The advantage for such structure-driven method is it does not depends on image itself, robust so that for any given corrupted image $u_\eta$, we can certainly obtain a reasonable solution $u_\alpha$, however, the disadvantage is, without {\magenta any} additional supervision,
choosing $\alpha$ too large would often result in the loss of relevant image details, and choosing $\alpha$ too small would produce an image with too much noise. \\

The data-driven approach, on the other hand, could incorporate the given prior knowledge of the problem in terms of a training set. 
In image denoising,
a ``training set'', or a ``dataset'' $\mathcal D$, is a finite family of pair of images
\be\label{dataset}
\mathcal D = \flp{(u_c, u_\eta),\,\,\flp{\text{finitely many of pairs}}},
\ee
containing
a corrupted one $u_\eta$, and the corresponding clean data $u_c$.
A typical data-driven approach can be defined as follows:
\be\label{learning_scheme}
\tilde\omega \in \argmin\flp{\sum_{u_c\in\mathcal D}\mathcal{L}\fsp{u_\omega-u_c}:\,\,\omega\in W},
\ee
for 
\be\label{neural_network}
u_\omega = \mathcal N_\omega(u_\eta).
\ee
The functional $\mathcal{N}_\omega$ in \eqref{neural_network} can be choosing from a widely range of options. For instance,
$\mathcal{N}_\omega$ could be the functional from the equation \eqref{min_rof} where we set $\omega=\alpha\in\R^+=W$. Other possible choices for $\mathcal N_\omega$ can be, 
for example,
a convolutional neural network such as AlexNet \cite{DBLP:conf/nips/KrizhevskySH12}, VGG \cite{DBLP:journals/corr/SimonyanZ14a}, GoogleNet \cite{DBLP:conf/cvpr/SzegedyLJSRAEVR15}, ResNet \cite{DBLP:conf/cvpr/HeZRS16}, DenseNet \cite{DBLP:conf/cvpr/HuangLMW17}, and Transformers \cite{DBLP:conf/nips/VaswaniSPUJGKP17}, in which the $\omega$ is a set of matrix with real numbers. The purpose of this article is, however, not to study neural network, and hence we will not delve into the regularity issues of such functional $\mathcal{N}$ and leave the interested readers to the reference mentioned above.\\\\
The main focus of this article is the \emph{loss function} $\mathcal L$, another critical part in data-driven approach which measures the difference between the clean image $u_c$ and the reconstructed image $u_\omega$. The choice of loss function is a delicate and crucial one, as it decides how \eqref{learning_scheme} to optimizes $\omega$. Popular choice of loss functions are $L^2$ or $L^1$ norm (also called MSE and MAE, respectively). However, such loss functions usually resulting in images that have high peak signal-to-noise ratios, but often lacking high-frequency details, and are therefore error-prone with respect to fine-scale, possibly clinically relevant details. 
Moreover, for medical images, especially the computed tomography angiography (CTA) scans for vessel reconstruction, the images are 3d dimensional and rather large in size. Therefore, the region of interested (ROI) constitutes a very small portion of the whole area, and hence, a suitable loss function must be able to accurately focus on both local area information, as well as global area information.   \\\\
In \cite{10.1007/978-3-030-32248-9_13}, a loss function composed with real order derivative was first introduced and successfully applied in GAN \cite{DBLP:journals/corr/GoodfellowPMXWOCB14}, a generative adversarial network, and received improved fine-scale details in reconstruction images. The loss function is concisely defined as the $r$-order total variation $TV^r$ (see Definition \ref{isotropic_frac_variation}), with $r\in(0,1)$ fixed. However, due to the limited literature at that time, no theoretical statements, such as compactness and semi continuity results, could be provided. \\\\
In this paper, we shall provide a full analysis of such real order derivative loss functions, and study our new functional $TV^r$, i.e. the \emph{real order} total variation semi-norm (of order $r$).

\begin{define}[Definition \ref{isotropic_frac_variation}]\label{TV_r_intro}
	Let $\ir$ denote the integer part of $r\in\R^+$. We define the \emph{$r$-order total variation} $TV^r(u)$ on $u\in L^1(Q)$ as follows. 
	\begin{enumerate}[1.]
		\item
		For $r=s\in(0,1)$ (i.e. $\ir=0$), we define
		\be\label{RLFOD_intro}
		TV^s(u):=\sup\flp{\int_Qu\, \divg^s \vp \,dx:\,\,\vp\in C_c^\infty(Q;\rn)\text{ and }\abs{\vp}\leq 1},
		\ee  
		where 
		$\divg^s$ is the ``fractional divergence'' operator,
		defined in \eqref{scaled_divg} below.
		\item
		For $r=\ir+s$ where $\ir\geq1$, we define 
		\be\label{RLFODr_intro}
		TV^r(u):=\sup\flp{\int_Qu\, \divg^s[\divg^\ir \vp] \,dx:\,\,\vp\in C_c^\infty(Q;\M^{N\times (N^{k})})\text{ and }\abs{\vp}\leq 1}.
		\ee
	\end{enumerate}
\end{define}
 The $TV^r$ can be thus considered as an extension of the classic
total variation semi-norm, which corresponds to the case $r=1$, as well as the $L^1$ norm, which corresponds to the case $r=0$.\\\\
Similarly, we introduce the functional space $BV^r$, defined as the space of
$L^1$ functions with bounded $TV^r$ semi-norm:
\begin{define}\label{strict_conv_BVr_one}
	Let $r\in\R^+$ be given. 
	\begin{enumerate}[1.]
		
		\item
		We define the (standard) $BV^r(Q)$ space by 
		\be\label{BV_r_p_diff_one}
		BV^r(Q):=\flp{u\in L^1(Q):\,\, TV^r(u)<+\infty}.
		\ee
		
		\item
		Given a sequence $\seqn{u_n}\subseteq L^1(Q)$ such that $TV^r(u_n)<+\infty$ for each $n\in\N$, we say it is \emph{strictly} converging to $u\in L^1(Q)$ 
		with respect to the $TV^r$ semi-norm, and write $u_n\wtogs u$, if
		\be\label{def_eq_strict_covg_one}
		\limn\norm{u-u_n}_{L^1(Q)}+\abs{TV^r(u_n)-TV^r(u)}= 0.
		\ee
		That is, $u_n\wtogs u$ if $u_n\to u$ strongly in $L^1(Q)$ and $TV^r(u_n)\to TV^r(u)$.

	\end{enumerate}
\end{define}
Thus, the space $BV^r$ can be therefore considered as an extension of the space $BV$.
The aim of this article is twofold. First, in Sections
\ref{tvs_setting_prop} and \ref{main_body_section}, we study the basic properties of functions with bounded real order
total variation, such as the lower semi-continuity (\emph{l.s.c.}) with respect to the function, and the  compactness of the embedding $BV^r(Q)\hookrightarrow L^1(Q)$.   The main result is (in which the space $SV^s$ is defined in Definition \ref{strict_conv_BVr}):
\begin{theorem}[see Theorem \ref{ATV_real_embedding}]\label{real_embedding_intro}
 Given a non-integer order $r\in \R^+\setminus \mathbb{N}$, let $s:=r-\lfloor r\rfloor \in (0,1)$
		be the
		decimal part of $r$. Consider a  sequence $\seqn{u_n}\subset SV^s(Q)\cap BV(Q)$ satisfying
	\be
	\sup\flp{\norm{u_n}_{L^\infty(\partial Q)}+\norm{u_n}_{BV^s(Q)}:\,\, n\in\N}<+\infty.
	\ee
	Then, there exists $u\in BV^s(Q)$ such that, up to a sub-sequence
	(which we do not relabel), $u_n\to u$ strongly in $L^1(Q)$.
\end{theorem}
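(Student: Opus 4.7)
The plan is to establish $L^1$-compactness via the Fréchet--Kolmogorov criterion applied to $\seqn{u_n}$. This requires three ingredients: a uniform $L^1(Q)$-bound on $\seqn{u_n}$, equi-continuity of translations in $L^1$, and tightness (which is automatic on the bounded domain $Q$). Once these are in place, Fréchet--Kolmogorov immediately extracts a subsequence converging strongly in $L^1(Q)$ to some $u$, and the lower semi-continuity of $TV^s$ (available from Section \ref{tvs_setting_prop}) delivers $u\in BV^s(Q)$.

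\textbf{Step 1 (uniform $L^1$-bound).} I would first verify that $\sup_n \norm{u_n}_{L^1(Q)} <+\infty$ by means of a fractional Poincaré-type inequality. The dual characterization of $TV^s$ in \eqref{RLFOD_intro}, combined with the hypothesis $\sup_n\norm{u_n}_{L^\infty(\partial Q)}<+\infty$, plays the role that a controlled mean plays in the classical Poincaré inequality: testing $TV^s$ against suitably chosen smooth vector fields and integrating by parts to expose the boundary trace yields $\norm{u_n}_{L^1(Q)}\leq C(\norm{u_n}_{L^\infty(\partial Q)}+TV^s(u_n))$.

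\textbf{Step 2 (equi-continuity of translations).} The core estimate is to show that for every $h\in\R^N$ with $|h|$ small, $\sup_n\norm{u_n(\cdot+h)-u_n}_{L^1(Q_h)}\leq C|h|^s$, where $Q_h:=\flp{x\in Q:\,\,x+h\in Q}$. I would obtain this by exploiting the Riemann--Liouville kernel representation of $\divg^s$: for $\phi\in C_c^\infty(Q_h)$ with $\norm{\phi}_{L^\infty}\leq 1$, one rewrites $\int(u_n(x+h)-u_n(x))\phi(x)\,dx$ as the pairing of $u_n$ against a fractional divergence of an explicit vector field whose $L^\infty$-norm is controlled by $|h|^s$, using that the R--L kernel is $\sigma$-Hölder in the translation variable for every $\sigma\leq s$. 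Taking the supremum over such $\phi$ and using $\sup_n TV^s(u_n)<+\infty$ then produces the desired modulus of continuity.

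\textbf{Main obstacle.} The principal difficulty is the non-locality of the R--L fractional divergence near $\partial Q$. Since the fractional kernel sees the whole half-line from the boundary, a translation by $h$ can push the support of a test field $\varphi\in C_c^\infty(Q)$ outside $Q$, so the translated field may fail to lie in the admissible class in the dual definition of $TV^s$. The hypothesis $u_n\in SV^s(Q)\cap BV(Q)$ together with the bounded $L^\infty(\partial Q)$-trace is exactly what lets me extend each $u_n$ to a slightly larger domain $Q'\supset Q$ while preserving the uniform $TV^s$-bound (using the strict-convergence structure of $SV^s$ to approximate by smoother functions, and the classical $BV$-trace theory to handle the gluing across $\partial Q$). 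On $Q'$ the translation estimate of Step 2 is unobstructed, and restricting back to $Q$ combined with Fréchet--Kolmogorov produces the strongly-$L^1$ convergent subsequence.
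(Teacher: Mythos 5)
Your overall skeleton coincides with the paper's: both proofs reduce the statement to the Riesz--Kolmogorov criterion (the paper's Theorem \ref{lp_compactness}) via a uniform translation estimate of order $|h|^s$, with the extension outside $Q$ and the $L^\infty(\partial Q)$ bound absorbing the boundary contribution. (Your Step 1 is superfluous: the uniform $L^1$ bound is already contained in the hypothesis $\sup_n\norm{u_n}_{BV^s(Q)}<+\infty$.) The mechanism you propose for the translation estimate, however, differs from the paper's, and it is there that the gaps lie. The paper works on smooth representatives (this is where the $SV^s$ hypothesis enters, via Proposition \ref{approx_smooth}), proves the one-dimensional estimate $\norm{\tau_h\tilde w-\tilde w}_{L^1(\R)}\leq h^s\,C_s\fsp{TV^s(w)+\norm{w}_{L^\infty(\partial I)}}+O(|h|)$ using a \emph{fractional mean value formula} $w(x+h)=w(x)+h^s\,\hat d^s_Lw(x+\theta h)/\Gamma(1+s)$ for a boundary-corrected Riemann--Liouville derivative (Proposition \ref{one_d_cuoweixiangjian}), and then passes to dimension $N$ by slicing coordinate by coordinate, exploiting that $TV^s_{\ell^1}$ splits along axes (Remark \ref{int_iso_equ}). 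Your route is instead a duality argument: represent $\phi(\cdot-h)-\phi$ as $\divg^s\psi_h$ with $\norm{\psi_h}_{L^\infty}\lesssim|h|^s$ and pair against $u_n$.

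The duality route has two concrete problems you do not resolve. First, the natural candidate $\psi_h=\mathbb I^s\fmp{\phi(\cdot-h)-\phi}$ is \emph{not} compactly supported in $Q$ (fractional integrals of compactly supported functions have tails reaching the endpoint of the interval), so it does not belong to the admissible class $C_c^\infty(Q;\rn)$ in the supremum \eqref{RLFOD_intro}; without enlarging that class, or proving a density statement, the pairing cannot be bounded by $TV^s(u_n)$. Second, and more seriously, your proposed fix for the boundary obstruction --- extending each $u_n$ to a larger domain $Q'\supset Q$ ``while preserving the uniform $TV^s$-bound'' --- does not make sense as stated: the Riemann--Liouville operators, and hence $TV^s$ itself, are anchored to the domain ($\mathbb I^s$ and $d^s$ integrate from the boundary of the interval), so $TV^s$ computed on $Q'$ is a \emph{different functional} from $TV^s$ on $Q$, and there is no evident comparison between $TV^s_{Q}(u_n)$ and $TV^s_{Q'}(\tilde u_n)$ for any extension $\tilde u_n$; classical $BV$ gluing across $\partial Q$ does not control this. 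The paper avoids the issue entirely: it extends by zero, never claims any fractional bound for the extension, and instead splits $\R$ into an interior region $I_h$ (where the mean value formula for the original function applies) and boundary strips $I^h\setminus I_h$ of measure $O(|h|)$, whose contribution is controlled by the trace bound. To repair your argument you would need either to adopt that splitting, or to prove a genuine extension theorem for domain-anchored fractional total variation, which is a nontrivial result in its own right.
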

Moreover, the recent research on neural architecture search (NAS) \cite{DBLP:conf/iclr/ZophL17, DBLP:journals/corr/abs-1806-09055} further speeds up the process of designing more powerful structures, and in certain cases, it involves to perform optimize regarding a set of loss functions, i.e., use model to determine the most suitable loss function from a set of functions. Hence, we will, in Section \ref{sec_functional_lsc}, investigate the functional properties of $\seqn{TV^{r_n}}$ with respect to a sequence of orders $\seqn{r_n}\subset \R^+$. The main theorem is:
\begin{theorem}[see Theorem \ref{compact_lsc_r_tv}]\label{compact_lsc_r_intro}
	Given sequences $\seqn{r_n}\subset \R^+$ and $\seqn{u_n}\subset L^1(Q)$ such that $r_n\to r\in\R^+\cup \flp{0}$, assume there exists $p\in(1,+\infty]$ such that 
	\be
	\sup\flp{\norm{u_n}_{L^p(Q)}+TV^{r_n}(u_n):\,\,n\in\N}<+\infty.
	\ee
	Then, the following statements hold.
	\begin{enumerate}[1.]
		\item
		There exists $u\in BV^r(Q)$, such that, up to a sub-sequence, $u_n\wto u$ weakly in $L^p(Q)$ and 
		\be
		\liminf_{n\to\infty} TV^{r_n}(u_n)\geq TV^r(u).
		\ee
		\item
		Assuming in addition that ${u_n}\in SV^{r_n}(Q)\cap BV(Q)$, $\norm{u_n}_{L^\infty(\partial Q)}$ is uniformly bounded, and $r_n\to r>0$. Then we have 
		\be
		u_n\to u\text{ strongly in }L^1(Q).
		\ee
	\end{enumerate}
\end{theorem}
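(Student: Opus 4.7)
The overall strategy is to first extract a weakly convergent subsequence in $L^p(Q)$ from the uniform bound, then pass to the limit in the dual formulation of $TV^{r_n}$ against smooth test fields to obtain the $\liminf$ inequality of part~1, and finally upgrade this convergence to strong $L^1$-convergence by invoking the compact embedding of Theorem~\ref{real_embedding_intro}.

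For part~1, since $\sup_n\|u_n\|_{L^p(Q)}<\infty$ with $p>1$, the Banach--Alaoglu theorem yields $u_n\wto u$ weakly in $L^p(Q)$ along a subsequence. To prove the $\liminf$ inequality, fix any admissible test field $\varphi\in C_c^\infty(Q;\M^{N\times N^{\lfloor r\rfloor}})$ with $|\varphi|\leq 1$, set $s_n := r_n-\lfloor r_n\rfloor$ and $s:=r-\lfloor r\rfloor$, and use Definition \ref{TV_r_intro} to write
\be
TV^{r_n}(u_n) \ \geq\ \int_Q u_n\,\divg^{s_n}\bigl[\divg^{\lfloor r_n\rfloor}\varphi\bigr]\,dx.
\ee
Continuity of the Riemann--Liouville fractional divergence in the order, applied to the smooth compactly supported field $\varphi$, should yield $\divg^{s_n}[\divg^{\lfloor r_n\rfloor}\varphi]\to\divg^{s}[\divg^{\lfloor r\rfloor}\varphi]$ strongly in $L^{p'}(Q)$; pairing this with the weak convergence and taking the supremum over $\varphi$ delivers $\liminf_n TV^{r_n}(u_n)\geq TV^r(u)$, which also certifies $u\in BV^r(Q)$.

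For part~2, since $r_n\to r>0$, we may fix $s_*\in(0,\min(r,1))$ with $s_*\leq r_n$ for $n$ large. Using a cross-order comparison of the form
\be
TV^{s_*}(u_n) \ \leq\ C\bigl(\|u_n\|_{L^1(Q)}+TV^{r_n}(u_n)\bigr),
\ee
which we expect to be available from the monotonicity/interpolation properties of the real-order total variation established in the preceding sections, we obtain a uniform $BV^{s_*}(Q)$-bound on $\{u_n\}$. Together with the uniform $BV(Q)$ bound, the uniform $L^\infty(\partial Q)$ bound, and (after verifying that the strict-convergence hypothesis $u_n\in SV^{r_n}$ transfers to $u_n\in SV^{s_*}$) the assumptions of Theorem \ref{real_embedding_intro} are met at the fixed non-integer order $s_*$. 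A subsequence then converges strongly in $L^1(Q)$, and uniqueness of the weak limit identifies the limit with the $u$ from part~1.

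The main obstacle is the continuity of the fractional divergence in the order parameter when the limit $r$ happens to be an integer. In that case $\lfloor r_n\rfloor$ oscillates between $r$ and $r-1$ along the sequence, the decimal parts $s_n$ accumulate at both $0$ and $1$, and the R--L kernel degenerates in those limits. This must be handled by splitting the subsequence according to whether $r_n$ approaches $r$ from above or below, and then exploiting that $\divg^{s}\varphi\to\varphi$ as $s\to 0^+$ (and $\divg^s\varphi\to\divg\varphi$ as $s\to 1^-$) strongly in $L^{p'}(Q)$ for smooth $\varphi$, via an explicit kernel representation and dominated convergence. The secondary delicate point is securing the cross-order comparison used in part~2, which we expect follows from a fractional integration-by-parts identity together with the boundary control $\|u_n\|_{L^\infty(\partial Q)}\leq C$.
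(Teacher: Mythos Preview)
Your approach to Part~1 is essentially the paper's: extract a weak $L^p$ limit, use continuity of $\divg^{s_n}[\divg^{\lfloor r_n\rfloor}\varphi]$ in the order (Lemma~\ref{linftyds}) to pass to the limit against fixed test fields, and treat the integer limit by splitting into approach from above and from below. This part is fine.

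For Part~2 there is a genuine gap. The only cross-order comparison proved in the paper (Proposition~\ref{zhendexiao_le}) is
\[
TV^{s_n}(u_n)\leq C\bigl(\|u_n\|_{L^1(Q)}+TV^{r_n}(u_n)\bigr),
\]
where $s_n=r_n-\lfloor r_n\rfloor$ is the \emph{decimal part} of $r_n$; the monotonicity of Proposition~\ref{bergounioux2017fractional} then lets you descend to a fixed $s_*$ only when $s_*\le s_n$. Your plan therefore works if $r$ is not an integer (then $s_n\to s>0$ and $s_*=s/2$ is admissible), but when $r\in\N$ and $r_n\searrow r$ one has $s_n\to 0$, so no fixed $s_*>0$ lies below all the $s_n$, and Theorem~\ref{real_embedding_intro} at a fixed order cannot be invoked. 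The paper handles precisely this case by a completely different mechanism (second half of the proof of Proposition~\ref{compact_lsc_r}): using the $AC^{r,1}$ representation of Theorem~\ref{thm_MR3144452} it shows $\|u_n-\nabla^{s_n}u_n\|_{L^1(Q)}\to 0$, while $\nabla^{s_n}u_n$ is uniformly bounded in \emph{standard} $BV(Q)$; ordinary $BV$-compactness then delivers the strong $L^1$ limit, which is identified with $u$ via the weak $L^p$ convergence from Part~1. You correctly flagged the integer limit as the main obstacle for Part~1, but it is an equally (indeed more) serious obstruction in Part~2, and it cannot be absorbed into your fixed-$s_*$ reduction.
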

Theorems \ref{real_embedding_intro} and \ref{compact_lsc_r_intro}
	provide the theoretical foundations for $TV^r$. These results
	provide a rigorous justification that not only $TV^r$ can be a loss function for any fixed order $r$, but also can be optimized regarding to its order in NAS, as more sophisticated approach in modern architecture design.\\

The paper is organized as follows: in Section \ref{tvs_setting_prop} we collect some notations and preliminary results on 
the fractional order derivative. In Section \ref{main_body_section} we analyze the main properties of the 
fractional $r$-order total variation, with 
{  fixed} $r\in\R^+\setminus \N$. The compact embedding, lower semi-continuity with respect to the order $r$, 
and {  the} relation between the fractional order total variation and its integer order counterpart will be 
the subjects of Section \ref{sec_functional_lsc}. 
\section{Preliminary results on fractional order derivatives}\label{tvs_setting_prop}
Through this article, $r\in \R^+$ denotes a (positive) constant, 
{  whose integer and fractional parts are denoted by $\ir$ and $s\in[0,1)$,
	respectively.} \\\\
We recall the definitions of fractional order derivative in dimension one.
\begin{define}[the fractional order derivative on unit interval]\label{frac_der_def}
	Let $I:=(0,1)$ and $x\in I$ be given.
	\begin{enumerate}[1.]
		
		\item
		The \emph{left (resp. right)-sided Riemann-Liouville} derivatives of order $r=\ir+s\in\R^+$ (see  \cite{MR1347689}) are defined by 
		\begin{align}
			\label{R_L_frac_1d_left}
			d^{r}_{L}w(x) &= \frac{1}{\Gamma(1-s)}\fsp{\frac d{dx}}^{\ir+1}\int_0^x \frac{w(t)}{(x-t)^{s}}dt,
			\qquad \Gamma(s):=\int_0^\infty e^{-t}t^{s-1}dt,
			\\
			d^{r}_{R}w(x) &= \frac{(-1)^{\ir+1}}{\Gamma(1-s)}\fsp{\frac d{dx}}^{\ir+1}\int_x^1 \frac{w(t)}{(t-x)^{s}}dt.
		\end{align}

		\item
		The \emph{left (resp. right)-Riemann-Liouville} fractional order integrals, of order $r\in\R^+$, are defined by 
		\be\label{r_int_frac_def}
		(\mathbb I_{L}^r w)(x):=\frac{1}{\Gamma(r)}\int_0^x \frac{w(t)}{(x-t)^{1-r}}dt
		\quad
		\text{ and }
		\quad
		({\mathbb I_R^r} w)(x):=\frac{1}{\Gamma(r)}\int_x^1 \frac{w(t)}{(x-t)^{1-r}}dt,
		\ee 
		{  respectively.}
		
		\item
		The {\em left (resp. right)-sided \emph{Caputo} derivative}, of order $r$,
		are defined by 
		\begin{align*}
			d^r_{L,c} w(x) &:=\frac{1}{\Gamma(1-s)}\int_0^x \frac{(d^{\ir+1})w(t)}{(x-t)^{s}}dt,\\
			d^{r}_{R,c}w(x) &:= \frac{(-1)^{\ir+1}}{\Gamma(1-s)}\int_x^1 \frac{(d^{\ir+1})w(t)}{(t-x)^{s}}dt.
		\end{align*}
	\end{enumerate}
\end{define}
We collect some immediate results regarding Definition \ref{frac_der_def} from
the literature.

\begin{remark} 
	Let $w\in C^\infty(\bar I)$ and $\phi\in C_c^\infty(I)$ be given.
	\begin{enumerate}[1.]
		\item
		The following integration by parts formulas hold (see, e.g.,\cite{1751-8121-40-24-003}):
		\begin{align*}
			\int_I w\,d_{R,c}^r \phi\,dx &=(-1)^{\ir+1}\int_I (d_L^rw)\, \phi\,dx,\\
			\int_I w\,d_{L,c}^r \phi\,dx&=(-1)^{\ir+1}\int_I (d_R^rw)\, { \phi}\,dx.
		\end{align*}
		
		\item
		The R-L and Caputo derivatives are equivalent for compactly supported functions (\cite[Theorem 2.2]{MR1347689}), i.e., for every $x\in I$, {  it} holds
		\be\label{caputo_eq_R-L}
		d^{r}_L\phi(x)= d^{r}_{L,c}\phi(x)\text{ and }d^{r}_R\phi(x)= d^{r}_{R,c}\phi(x).
		\ee
		Thus, the integration by parts formulas above can be rewritten as
		\begin{align*}
			\int_I w\,d_{R}^r \phi\,dx &=(-1)^{\ir+1}\int_I (d_L^rw)\, \phi\,dx ,\\
			\int_I w\,d_{L}^r \phi\,dx &=(-1)^{\ir+1}\int_I (d_R^rw)\, { \phi}\,dx.
		\end{align*}
		
		\item
		For {  given} functions $w_1$, $w_2\in C^\infty(\bar I)$, and
		{  parameters} $a$, $b\in\R$, {  the following linearity
			property holds:}
		\be\label{linearity_frac}
		d^r(a w_1(x)+b w_2(x))=a d^r w_1(x)+b d^r w_2(x).
		\ee
	\end{enumerate}

\end{remark}
\begin{remark}\label{left_use_whole}
	In the following, we shall only work with left-sided fractional order derivative/integrations, as the   
	arguments for the right-sided analogues
	are identical. 
	Thus, for brevity, we drop the underlying $L$ and $R$, 
	and write $d^r$ and $\mathbb I^r$, instead of $d^r_L$ and $\mathbb I^r_L$, unless 
	otherwise specified.
\end{remark}
We next recall the definition of representable functions.
\begin{define}[Representable functions]\label{frac_represent_I}
	We denote by $\mathbb I^r(L^1(I))$, $r>0$, the space of functions represented by the $r$-order derivative of a summable function. That is,
	\be
	\mathbb I^r(L^1(I)):=\flp{f\in L^1(I):\,\, f=\mathbb I^rw,\,\,w\in L^1(I)}.
	\ee
\end{define}
Next we recall several theorems, {  from \cite{MR1347689}}, 
on representable functions in dimension one.

\begin{theorem}\label{thm_MR1347689}
	For convenience, we unify our notation by writing $\mathbb I^r = d^{-r}$ for $r<0$.
	\begin{enumerate}[1.]
		
		\item\label{cite_represent_frac}
		{\cite[Theorem 2.3]{MR1347689}}
		{  In order for}
		$w(x)\in \mathbb I^r(L^1(I))$, $r>0$ 
		{  to hold, it is necessary and sufficient to have}
		\be
		\mathbb I^{\ir+1-r}[w](x)\in W^{\ir+1,1}(I),\qquad r=\ir+s \label{inte_frac_cond}
		\ee
		and
		\be
		(d^l \mathbb I^{\ir+1-r}[w])(0)=0, \qquad l=0,1,\cdots,\ir\label{bdy_frac_cond}
		\ee
		
		\item\label{MR1347689T2_5}
		{\cite[Theorem 2.5]{MR1347689}}
		The relation
		\be
		\mathbb I^{r_1} \mathbb I^{r_2} w=\mathbb I^{{r_1}+r_2}w
		\ee
		is valid if one of the following conditions holds:
		\begin{enumerate}[1.]
			
			\item
			$r_2>0$, ${r_1}+r_2>0$, provided that $w\in L^1(I)$,
			
			\item
			$r_2<0$, ${r_1}>0$, provided that $w\in \mathbb I^{-r_2}(L^1(I))$,
			
			\item
			${r_1}<0$, ${r_1}+r_2<0$, provided that $w\in \mathbb I^{-{r_1}-r_2}(L^1(I))$.
		\end{enumerate}
		
		\item\label{semigroup_frac_int}
		{\cite[Theorem 2.6]{MR1347689}}
		Let $r\in\R^+$ be given.
		\begin{enumerate}[1.]
			
			\item
			The fractional order integration operator $\mathbb I^r$ forms a semigroup in $L^p(I)$, $p\geq 1$, which is continuous in the uniform topology for all $r> 0$,
			and strongly continuous for all $r\geq 0$. 
			\item
			(\cite[(2.72)]{MR1347689}) It holds 
			\be\label{eq_semigroup_frac_int}
			\norm{\mathbb I^r w}_{L^{ p}(a,b)}\leq (b-a)^r\frac{1}{r\Gamma(r)}\norm{w}_{L^{ p}(a,b)},\qquad{ p
				\ge 1}.
			\ee
		\end{enumerate}
	\end{enumerate}
\end{theorem}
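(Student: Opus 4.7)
The theorem compiles three classical results; I would treat them in the logical order (2) $\to$ (3) $\to$ (1), since the composition/semigroup identity is the main engine that drives everything else. For Part (2), the key is the Beta integral: for positive $r_1, r_2$ and $w \in L^1(I)$, I would unfold $\mathbb{I}^{r_1}\mathbb{I}^{r_2}w(x)$ as a double iterated integral, justify Fubini by splitting $w = w_+ - w_-$, and evaluate the inner $t$-integral via the substitution $t = s + (x-s)u$ to obtain $B(r_1,r_2)(x-s)^{r_1+r_2-1}$; the identity $B(r_1,r_2) = \Gamma(r_1)\Gamma(r_2)/\Gamma(r_1+r_2)$ then collapses the whole expression to $\mathbb{I}^{r_1+r_2}w$. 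The mixed-sign cases reduce to this positive-order case by exploiting the representability hypothesis: if $w = \mathbb{I}^{-r_2}\tilde w$ with $\tilde w \in L^1(I)$, then $\mathbb{I}^{r_2}w = \tilde w$ and the remaining composition involves only one sign.

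For Part (3), the norm bound is immediate from Young's convolution inequality applied to the one-sided kernel $K(u) := u^{r-1}/\Gamma(r) \cdot \mathbf{1}_{(0,b-a)}(u)$, whose $L^1$ norm equals $(b-a)^r/(r\Gamma(r))$, once one observes that $\mathbb{I}^r w$ is the restriction of the convolution $K \ast (w\mathbf{1}_{(a,b)})$ to $(a,b)$. The semigroup law in $L^p$ is exactly Part (2)(i); uniform continuity for $r>0$ follows from $\norm{\mathbb{I}^r - \mathbb{I}^{r_0}}_{L^p \to L^p} \leq \norm{K_r - K_{r_0}}_{L^1}$, which vanishes by dominated convergence; and strong continuity at $r = 0^+$ is proved first on a dense subspace of smooth functions vanishing near the left endpoint---where $\mathbb{I}^r w(x) \to w(x)$ pointwise by integration by parts plus dominated convergence---and then propagated to all of $L^p$ by the uniform bound already obtained.

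For Part (1), necessity is a direct application of Part (2): if $w = \mathbb{I}^r \phi$ with $\phi \in L^1(I)$, then $\mathbb{I}^{\ir+1-r}w = \mathbb{I}^{\ir+1}\phi$ is the $(\ir+1)$-fold antiderivative of $\phi$ starting at $0$, hence lies in $W^{\ir+1,1}(I)$ with all derivatives through order $\ir$ vanishing at $x = 0$. Sufficiency is the delicate direction: I would define $\phi := d^{\ir+1}\mathbb{I}^{\ir+1-r}w$, which lies in $L^1(I)$ by \eqref{inte_frac_cond}; the vanishing conditions \eqref{bdy_frac_cond} are precisely what Taylor's formula with integral remainder needs in order to recover $\mathbb{I}^{\ir+1-r}w = \mathbb{I}^{\ir+1}\phi$, and a final invocation of the composition rule cancels $\mathbb{I}^{\ir+1-r}$ against its formal inverse to yield $w = \mathbb{I}^r\phi$.

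I expect the sufficiency half of Part (1) to be the main obstacle, because this is where one must rigorously legitimize the cancellation of $\mathbb{I}^{\ir+1-r}$ against the R-L derivative $d^{\ir+1-r}$; the boundary conditions \eqref{bdy_frac_cond} are precisely what render this manipulation legal rather than merely formal. The three parts are logically interlocked---the mixed-sign cases of Part (2) lean on Part (1), while Part (1) needs Part (2) in both directions---so the cycle must be broken by first proving the positive-order version of Part (2), then Part (1), and only then closing the loop to obtain the full Part (2) and Part (3).
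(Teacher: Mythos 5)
The paper does not prove this theorem at all: it is presented as a recollection of classical results, with each item cited directly from \cite{MR1347689} (Theorems 2.3, 2.5, 2.6 and formula (2.72) there). So there is no in-paper argument to compare against; what you have written is a from-scratch reconstruction, and it is essentially the standard textbook route of Samko--Kilbas--Marichev: the Dirichlet/Fubini computation with the Beta integral for the positive-order semigroup law, Young's inequality with the truncated kernel $K(u)=u^{r-1}\mathbf{1}_{(0,b-a)}(u)/\Gamma(r)$ for the norm bound \eqref{eq_semigroup_frac_int} (whose $L^1$ norm is exactly $(b-a)^r/(r\Gamma(r))$), density plus the uniform bound $\|\mathbb I^r\|\le (b-a)^r/\Gamma(r+1)\to 1$ for strong continuity at $r=0$, and the Taylor-remainder characterization for Part (1). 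Your ordering observation (positive-order composition first, then Part (1), then the mixed-sign cases) is also how the circularity is broken in the reference. Two spots in your sketch are thinner than they should be. First, condition (a) of Part (2) permits $r_1<0$ with $r_1+r_2>0$, which your Beta-integral computation does not cover and which does not obviously reduce to a representability hypothesis as in cases (b)--(c); that sub-case needs its own reduction (write $\mathbb I^{r_2}w=\mathbb I^{-r_1}\mathbb I^{r_1+r_2}w$ by the positive case, then differentiate). Second, for the sufficiency half of Part (1) you correctly identify the cancellation of $\mathbb I^{\ir+1-r}$ as the crux but leave it formal; the clean closing move is to apply $\mathbb I^{s}$ to the identity $\mathbb I^{\ir+1-r}w=\mathbb I^{\ir+1-r}(\mathbb I^{r}\phi)$, which by the already-established positive-order law turns both sides into single antiderivatives $\mathbb I^{1}(\cdot)$ of $L^1$ functions, whence $w=\mathbb I^r\phi$ a.e.\ by differentiation. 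With those two points filled in, your outline is a correct proof of the cited results.
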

{ Next, we introduce }the notations for (partial) fractional order derivative in 
higher
dimension. %
\begin{define}[fractional order partial derivative]
	Given $x=(x_1,\ldots,x_N)\in Q=(0,1)^N\subset\rn$ and $u\in C^\infty(Q)$, we define the $r$-order partial derivative: 
	\begin{align*}
		\partial^r_1 u(x) := \frac {d^r}{ {  dt^r}} u(t,x_2,x_3,\ldots,x_N),
	\end{align*}
	and similarly for $\partial_i^ru(x)$, $i=2,\ldots, N$.
\end{define}
We next recall the integration by parts formula  {  for} fractional order
{  derivatives}
from \cite{chen2013fractional}: for $u\in C^\infty(Q)$, $v\in C_c^\infty(Q)$, it holds
\be
\int_Q u\,\partial^s v \,dx = - \int_Q \partial^s u\, v\,dx.
\ee
Then, by Theorem \ref{thm_MR1347689}, we have the following multi-index integration by parts formula:
\be
\int_Q u\,\partial^r v \,dx = (-1)^{\ir+1} \int_Q \partial^r u\, v\,dx.
\ee
We conclude this section by recalling the following technical lemma. 
\begin{lemma}[\cite{MR1347689}]\label{power_function_s}
	Let $k\in\N$ be given.
	The following assertions hold.
	\begin{enumerate}[1.]
		\item
		We have
		\be
		d^s_L x^k = \frac{\Gamma(k+1)}{\Gamma\fsp{k-s+1}}x^{k-s}
		,
		\quad
		{  d^s_R (1-x)^k = \frac{\Gamma(k+1)}{\Gamma\fsp{k-s+1}}(1-x)^{k-s}
			,\qquad \text{for all } x\in I,
			s\in(0,1) }.
		\ee
		\item
		If $k=0$, i.e., $w(x):=x^k=1$ {  is the function
			identically equal to 1, then $d^r_L w(x)=d^r_R w(x)=0$} if and only if $r\in\N$.
		
		\item
		For all $s\in(0,1)$, we have {  $d^s_L x^{s-1}=d^s_R (1-x)^{s-1}=0$}.
	\end{enumerate}
\end{lemma}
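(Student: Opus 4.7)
The plan is to establish all three items by direct computation, exploiting the Beta function identity $B(a,b)=\Gamma(a)\Gamma(b)/\Gamma(a+b)$.

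For item 1, I would work only with the left-sided version (the right-sided one follows by the change of variable $x\mapsto 1-x$, as flagged in Remark \ref{left_use_whole}). Since $s\in(0,1)$, one has $\lfloor s\rfloor=0$, so the definition reduces to
\[
d^s_L x^k = \frac{1}{\Gamma(1-s)}\,\frac{d}{dx}\int_0^x \frac{t^k}{(x-t)^s}\,dt.
\]
The substitution $t=x\tau$ turns the integral into $x^{k+1-s}\int_0^1 \tau^k(1-\tau)^{-s}d\tau = x^{k+1-s}\,B(k+1,1-s)$. Using $B(k+1,1-s)=\Gamma(k+1)\Gamma(1-s)/\Gamma(k+2-s)$ and differentiating once, the factor $(k+1-s)$ combines with $\Gamma(k+2-s)=(k+1-s)\Gamma(k+1-s)$ to give exactly the claimed formula.

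For item 2, I split into two cases. If $r\in\N$, then $d^r_L$ and $d^r_R$ coincide with the classical $r$-th derivative (this is the $s=0$ instance of Definition \ref{frac_der_def}), and the classical derivative of the constant $1$ vanishes. If $r\notin\N$, write $r=\lfloor r\rfloor+s$ with $s\in(0,1)$, and compute
\[
\int_0^x (x-t)^{-s}dt = \frac{x^{1-s}}{1-s},
\]
so that $d^r_L 1 = \frac{1}{(1-s)\Gamma(1-s)}\bigl(\tfrac{d}{dx}\bigr)^{\lfloor r\rfloor+1} x^{1-s}$. Since $1-s\notin\N_0$, the iterated derivative of $x^{1-s}$ is a nonzero constant times $x^{-s-\lfloor r\rfloor}$, which is not identically zero on $I$; an analogous argument handles $d^r_R$. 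This gives the ``if and only if''.

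For item 3, I would perform the same substitution $t=x\tau$ in
\[
d^s_L x^{s-1} = \frac{1}{\Gamma(1-s)}\,\frac{d}{dx}\int_0^x \frac{t^{s-1}}{(x-t)^s}\,dt,
\]
observing that the resulting integral equals $x^{0}\cdot B(s,1-s)=\Gamma(s)\Gamma(1-s)$, i.e.\ a constant independent of $x$. Its derivative is therefore zero, and the right-sided identity follows by reflection. There is no real obstacle here; the only point requiring a small amount of care is the bookkeeping in item 2 when $\lfloor r\rfloor\ge 1$, to verify that the iterated classical derivative of $x^{1-s}$ truly fails to vanish for non-integer $r$.
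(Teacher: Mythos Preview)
Your proof is correct. The paper does not actually prove this lemma: it is stated as a recall from \cite{MR1347689} and used without proof, so there is no ``paper's own proof'' to compare against. Your direct Beta-function computation is precisely the standard argument one finds in the fractional-calculus literature, and all three items are handled cleanly; the only minor remark is that in item~2 you could note explicitly that the nonzero prefactor in the iterated derivative of $x^{1-s}$ is $\Gamma(2-s)/\Gamma(1-s-\lfloor r\rfloor)$, which is finite and nonzero because $1-s\notin\N_0$ keeps the denominator away from the poles of $\Gamma$.
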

\section{The space of functions with bounded fractional-order total variation}\label{main_body_section}
\subsection{Total variation with different underlying Euclidean norm}
We start by recalling the definition of Euclidean $\ell^p$-norm on $\rn$. Let $p\in[1,+\infty)$ and $x=(x_1,x_2,\ldots,x_N)\in \rn $ be given, we define 
\be
\abs{x}_{\ell^p}:=(|x_1|^p+|x_2|^p+\cdots |x_N|^p)^{1/p}.
\label{norm inequality}
\ee
Note that $\abs{\cdot}_{\ell^p}$ are equivalent norms on $\rn$, in the sense that,
{  for all $1\leq q<p\leq \infty$,}
\be\label{x_eu_equivalence}
\abs{x}_{\ell^p}\leq\abs{x}_{\ell^q}\leq N^{1/q-1/p}\abs{x}_{\ell^p}.
\ee

\begin{define}\label{TV_ATV_integer}
	Let $u\in L^1(Q)$ be given. We recall the following definition:
	  given $k\in\N$,
		the $k$-th order total variation, with underlying Euclidean $\ell^p$-norm, is defined as
		\be
		TV_{\ell^p}^k(u):=\sup\flp{\int_Qu\, \divg^k \vp \,dx:\,\,\vp\in C_c^\infty(Q;\M^{N\times N^{k-1}})\text{ and }\abs{\vp}_{\ell^p}^\ast\leq 1}.
		\ee
		Here $\abs{\cdot}_{\ell^p}^\ast$ denotes the dual norm associated with $\abs{\cdot}_{\ell^p}$,
		
			in the sense that
			\[\abs{\varphi}_{\ell^p}^* = \sup_{  i} \sup_x |\varphi_{  i} (x)|_{\ell^{p^*}},
			\qquad
			\frac{1}{p}+\frac{1}{p^*}=1,\] 
			where $\varphi_{  i}$, {  $i=1,\cdots ,N^{k}$,}
			are the individual components of $\varphi$.\\
		
		Of particular interest are the
		first and second order total variation, corresponding to the
		cases $k=1,2$:
		\begin{align*}
			TV_{\ell^p}(u)&:=\sup\flp{\int_Qu\, \divg \vp \,dx:\,\,\vp\in C_c^\infty(Q;\rn)\text{ and }\abs{\vp}_{\ell^p}^\ast\leq 1},\\
			TV_{\ell^p}^2(u)&:=\sup\flp{\int_Qu\, \divg^2 \vp \,dx:\,\,\vp\in C_c^\infty(Q;\M^{N\times N})\text{ and }\abs{\vp}_{\ell^p}^\ast\leq 1}.
		\end{align*}
\end{define}

As an example, when $N=2$, we have $\vp=[\vp_1,\vp_2;\vp_3,\vp_4]$, and 
\begin{align*}
	\divg^2 \vp = \divg(\divg(\vp_1,\vp_2),\divg(\vp_3,\vp_4))&=\divg(\partial_1 \vp_1+\partial_2 \vp_2, \partial_1\vp_3+\partial_2\vp_4)\\
	&=\partial_1\partial_1\vp_1+\partial_1\partial_2\vp_2+\partial_2\partial_1 \vp_3+\partial_2\partial_2\vp_4.
\end{align*}
{  We remark that it is possible to recover} the classical \emph{isotropic total variation} ($TV$) and \emph{an-isotropic total variation} ($ATV$) by letting $p=2$ and $p=1$, respectively. Moreover, in dimension one, all $TV_{\ell^p}$ are the same.

More in general, the following equivalence result holds.
\begin{lemma}\label{TVlpq_lemma}[Equivalence of the $TV_{\ell^p}^r$
	norms]
	Given $1\leq q<p\leq \infty$, we have
	\be
	N^{1/p-1/q}TV_{\ell^q}^r(u)\leq TV_{\ell^p}^r(u)\leq TV_{\ell^q}^r(u),
	\ee
	for all $r\in\R^+$ and $u\in BV^r(Q)$.
\end{lemma}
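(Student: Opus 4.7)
The plan is to view both sides of the claimed inequality as suprema over families of admissible test functions, and exploit the pointwise equivalence of Euclidean $\ell^p$-norms \eqref{x_eu_equivalence} to pass admissible test functions from one family to the other (up to a scaling factor). The key observation is that the only $p$-dependent ingredient in the definition of $TV_{\ell^p}^r$ is the constraint $\abs{\varphi}_{\ell^p}^\ast \le 1$, which, by definition, amounts to the componentwise bound $\sup_i\sup_x |\varphi_i(x)|_{\ell^{p^\ast}} \le 1$, where $1/p+1/p^\ast = 1$. Since $1\le q<p\le\infty$ implies $1\le p^\ast<q^\ast\le\infty$, applying \eqref{x_eu_equivalence} to dual exponents gives, for every $v\in\rn$,
\[
|v|_{\ell^{q^\ast}} \;\le\; |v|_{\ell^{p^\ast}} \;\le\; N^{1/p^\ast-1/q^\ast}|v|_{\ell^{q^\ast}} \;=\; N^{1/q-1/p}|v|_{\ell^{q^\ast}}.
\]

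For the upper bound $TV_{\ell^p}^r(u)\le TV_{\ell^q}^r(u)$, I would take any $\varphi$ admissible for $TV_{\ell^p}^r$ and note that the left half of the displayed inequality forces $|\varphi_i(x)|_{\ell^{q^\ast}}\le|\varphi_i(x)|_{\ell^{p^\ast}}\le1$, so $\varphi$ is also admissible for $TV_{\ell^q}^r$. The supremum defining $TV_{\ell^p}^r(u)$ is therefore taken over a subset of the test functions used for $TV_{\ell^q}^r(u)$, so the inequality follows immediately.

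For the lower bound $N^{1/p-1/q}TV_{\ell^q}^r(u)\le TV_{\ell^p}^r(u)$, I would rescale: given $\psi$ admissible for $TV_{\ell^q}^r$, the right half of the displayed inequality yields $|\psi_i(x)|_{\ell^{p^\ast}}\le N^{1/q-1/p}$, so $\varphi := N^{1/p-1/q}\psi$ is admissible for $TV_{\ell^p}^r$. By linearity of the fractional divergence operator (which is inherited from the linearity \eqref{linearity_frac} of the one-dimensional R--L derivatives, together with Theorem \ref{thm_MR1347689}\eqref{MR1347689T2_5} to combine $\divg^s$ and $\divg^{\ir}$ in the non-integer case),
\[
N^{1/p-1/q}\int_Q u\,\divg^r\psi\,dx \;=\; \int_Q u\,\divg^r\varphi\,dx \;\le\; TV_{\ell^p}^r(u).
\]
Taking the supremum over admissible $\psi$ completes the lower bound.

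The argument is uniform across both cases of Definition \ref{TV_r_intro}: whether $r$ is integer, purely fractional $r=s\in(0,1)$, or of the mixed form $r=\ir+s$, the admissibility condition on $\varphi$ is always a pointwise constraint on each component of $\varphi(x)\in\rn$, and the differential operator acting on $\varphi$ is linear. Consequently, there is no real obstacle beyond bookkeeping: the mild subtlety is keeping straight that the relevant exponents in \eqref{x_eu_equivalence} are the dual exponents $p^\ast, q^\ast$, and that $1/p^\ast-1/q^\ast = 1/q-1/p$, which is what converts the equivalence on $\rn$ into the factor $N^{1/p-1/q}$ appearing in the statement.
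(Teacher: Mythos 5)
Your argument is correct and follows essentially the same route as the paper: both proofs reduce the claim to the pointwise equivalence \eqref{x_eu_equivalence} applied to the dual exponents $p^\ast,q^\ast$, deduce the inclusion of the admissible test-function families to get $TV_{\ell^p}^r(u)\le TV_{\ell^q}^r(u)$, and obtain the reverse bound with the factor $N^{1/p-1/q}$ by the analogous rescaling argument (which the paper leaves implicit as ``completely analogous'' and you spell out explicitly).
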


\begin{proof}
	{ 	In the following, we will use $p^*$ to denote the dual exponent of $p$, i.e.
		$\frac{1}{p}+\frac{1}{p^*}=1$.
	}
	Let $1\leq q<p\leq+\infty$ be given. From Remark \ref{an_iso_equ}, we have 
	\be\label{lp_euclidean_use}
	N^{1/p-1/q}\abs{\vp(x)}_{\ell^{p^\ast}}\leq \abs{\vp(x)}_{\ell^{q^\ast}}\leq \abs{\vp(x)}_{\ell^{p^\ast}},
	\ee
	for all $\vp\in C_c^\infty(Q;\rn)$, and $x\in Q$. That is, for any $\vp\in C_c^\infty(Q;\rn)$ such that $\abs{\vp(x)}_{\ell^{p^\ast}}\leq 1$, 
	we have $\abs{\vp(x)}_{\ell^{q^\ast}}\leq 1$. 
	{ 
		Thus
		\[\left\{  \vp\in C_c^\infty(Q;\M^{N\times (N^{\ir})}):\abs{\vp}_{\ell^q}^\ast\leq 1  \right\}
		\supseteq \left\{  \vp\in C_c^\infty(Q;\M^{N\times (N^{\ir})}):\abs{\vp}_{\ell^p}^\ast\leq 1  \right\},\]
		which then gives
		$
		TV^r_{\ell^q}(u)\ge 	 TV^r_{\ell^p}(u)$. 
		The proof for $N^{1/p-1/q}TV_{\ell^q}^r(u)\leq TV_{\ell^p}^r(u)$ is completely analogous.
	}

\end{proof}

We recall the usual trace operator for function with bounded total variation.

\begin{theorem}[{\cite[Theorem 2, Page 181]{evans2015measure}}]\label{usual_trace}
	Let $u\in BV(Q)$ be given. Then, for $\hnmo$-a.e. $x_0\in\partial Q$,
	\be
	\lime\fint_{B(x_0,\e)\cap Q}\abs{u- T[u](x)}dx=0,
	\ee
	where $T[\cdot]$ denotes the standard trace operator. 
	{  That is,}
	\be
	T[u](x_0)=\lime \fint_{B(x_0,\e)\cap Q}u(x)\,dx.
	\ee
\end{theorem}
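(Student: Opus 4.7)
The plan is to reduce the problem to a local, flat-boundary situation, then combine a one-dimensional slicing argument for $BV$ functions with a boundary Poincaré inequality. Since $\partial Q$ is piecewise flat, I first localize: decompose $\partial Q$ into the relative interiors of its $2N$ faces together with the union $E$ of lower-dimensional edges and vertices, which satisfies $\hnmo(E)=0$ and can therefore be discarded. Fix $x_0$ in the interior of one face, say $F\subset\{x_N=0\}$; then for $\e$ sufficiently small, $B(x_0,\e)\cap Q$ coincides with the half-ball $B^+(x_0,\e):=B(x_0,\e)\cap\{x_N>0\}$, and the problem becomes one of boundary regularity of $BV(B^+(x_0,\e))$.

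Next I would define the trace through slicing. By the one-dimensional $BV$ slicing theorem applied in the $x_N$ direction, for $\hnmo$-a.e.\ $x'\in F$ the slice $t\mapsto u(x',t)$ belongs to $BV((0,1))$, and for such $x'$ the one-sided limit $u^+(x',0):=\lim_{t\to 0^+}u(x',t)$ exists and is finite. I define the candidate trace by $T[u](x_0):=u^+(x_0)$ at these points. Fubini together with $u\in BV(Q)$ ensures that $x'\mapsto u^+(x',0)$ lies in $L^1(F)$, and the slicewise estimate
\[
\int_0^{\e}\bigl| u(x',t)-u^+(x',0)\bigr|\,dt\;\le\;\e\,\bigl|\partial_N u(x',\cdot)\bigr|\bigl((0,\e)\bigr)
\]
holds for $\hnmo$-a.e.\ $x'$.

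Integrating this estimate over the projection of $B^+(x_0,\e)$ onto $F$, combined with a boundary Poincaré inequality of the form
\[
\int_{B^+(x_0,\e)}\bigl|u-(u)_{B^+(x_0,\e)}\bigr|\,dx\;\le\; C\,\e\,|Du|\bigl(B^+(x_0,\e)\bigr),
\]
and Lebesgue differentiation applied to the finite Radon measure $|Du|$ at $\hnmo$-a.e.\ $x_0\in F$, one obtains
\[
\lime\frac{1}{|B(x_0,\e)\cap Q|}\int_{B(x_0,\e)\cap Q}\bigl|u(x)-T[u](x_0)\bigr|\,dx\;=\;0.
\]
The convergence of the averages $\fint_{B(x_0,\e)\cap Q}u\,dx\to T[u](x_0)$ then follows immediately from the triangle inequality.

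The main obstacle is the rigorous transition from the pointwise one-dimensional trace (available $\hnmo$-a.e.\ on each face via slicing) to the $L^1$-averaged convergence on half-balls. This passage rests on a Besicovitch-type covering argument for the singular part of $|Du|$ near $\partial Q$, ensuring that the set of exceptional boundary points where the ball averages fail to converge is $\hnmo$-negligible. The edges and corners of $Q$, where the flat-boundary reduction is unavailable, are absorbed into the $\hnmo$-null set $E$ introduced at the outset, so no additional argument is needed there.
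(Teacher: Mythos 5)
The paper does not prove this statement at all: Theorem \ref{usual_trace} is quoted verbatim from \cite[Theorem 2, Page 181]{evans2015measure} and used as a black box, so there is no internal proof to compare against. Measured against the cited source, your argument is a correct but genuinely different route. Evans--Gariepy construct the trace by showing that the inner translates $u(\cdot+\e e_N)$ (after flattening the boundary) form a Cauchy family in $L^1$ of the boundary and defining $T[u]$ as their limit; you instead define $T[u](x')$ as the one-sided limit of the one-dimensional $BV$ slices $t\mapsto u(x',t)$, which exists for $\hnmo$-a.e.\ $x'$ on each face. For a cube this is cleaner, since the faces are already flat and no Lipschitz-graph localization is needed, and your slicewise bound $\int_0^{\e}|u(x',t)-u^+(x',0)|\,dt\le\e\,|\partial_N u(x',\cdot)|((0,\e))$ is exactly the right quantitative input. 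Both proofs ultimately hinge on the same covering lemma, which you correctly single out as the crux: for $\hnmo$-a.e.\ $x_0\in\partial Q$ one needs $|Du|(B(x_0,\e)\cap Q)=o(\e^{N-1})$, obtained by a Vitali/Besicovitch argument together with the fact that $|Du|$ is finite, so that $|Du|(\{x\in Q:\operatorname{dist}(x,\partial Q)<\delta\})\to0$ as $\delta\to0$. Two small tightenings: this lemma should be applied to all of $|Du|$ on collar neighborhoods, not only to its singular part; and the passage from $T[u](x')$ to $T[u](x_0)$ inside the half-ball average requires, in addition, that $x_0$ be an $\hnmo$-Lebesgue point of the face restriction $T[u]\in L^1(F)$ (which your Fubini bound $\int_F|u^+(x',0)|\,d\hnmo\le\|u\|_{L^1(Q)}+|D_Nu|(Q)$ makes available). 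Once that term is handled by Lebesgue differentiation on the face, the boundary Poincar\'e inequality you invoke is actually superfluous --- the slice estimate plus the two differentiation statements already yield
\be
\lime\fint_{B(x_0,\e)\cap Q}\abs{u(x)-T[u](x_0)}\,dx=0 ,
\ee
and the convergence of the plain averages follows by the triangle inequality, as you say.
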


\begin{define}\label{isotropic_frac_variation}
	We define the \emph{$r$-order total variation} $TV_{\ell^p}^r(u)$ on $u\in L^1(Q)$ as follows.
	\begin{enumerate}[1.]
		\item
		For $r=s\in(0,1)$ (i.e. $\ir=0$), we define
		\be\label{RLFOD}
		TV_{\ell^p}^s(u):=\sup\flp{\int_Qu\, \divg^s \vp \,dx:\,\,\vp\in C_c^\infty(Q;\rn)\text{ and }\abs{\vp}_{\ell^p}^\ast\leq 1},
		\ee  
		where 
		\be\label{scaled_divg}
		\divg^s u:=[(1-1/N)s+1/N]\sum_{i=1}^N\partial^s_{i,R}\vp_i; 
		\ee
		\item
		For $r=\ir+s$ where $\ir\geq1$, we define 
		\be\label{RLFODr}
		TV_{\ell^p}^r(u):=\sup\flp{\int_Qu\, \divg^s[\divg^\ir \vp] \,dx:\,\,\vp\in C_c^\infty(Q;\M^{N\times (N^{k})})\text{ and }\abs{\vp}_{\ell^p}^\ast\leq 1}.
		\ee
	\end{enumerate}
\end{define}
\begin{remark}
	In \eqref{scaled_divg} we applied the right-sided derivative on the test function $\vp$.
	{  In this way,} when $u$ is sufficient regular, {  e.g.} $u\in C^\infty(\bar Q)$, the integration by parts formula 
	\be
	\int_Qu\, \divg^s \vp \,dx =- \int_Q\nabla^s_Lu\,\vp\,dx
	\ee 
	{  holds.}
	{  Similarly,}
	if we choose to work primarily with the right-sided derivative, we shall use left-sided derivative on 
	the test function $\vp$ in \eqref{scaled_divg}.
\end{remark}
%
%
\begin{define}\label{strict_conv_BVr}
	Let $r\in\R^+$ and $p\in[1,+\infty]$ be given. 
	\begin{enumerate}[1.]
		\item
		Given a sequence $\seqn{u_n}\subseteq L^1(Q)$ such that $TV^r(u_n)<+\infty$ for each $n\in\N$, we say it is \emph{strictly} converging to $u\in L^1(Q)$ 
		with respect to the $TV^r_{\ell^p}$ semi-norm, and write $u_n\wtogsp u$, if
		\be\label{def_eq_strict_covg}
		\limn\norm{u-u_n}_{L^1(Q)}+\abs{TV^r_{\ell^p}(u_n)-TV_{\ell^p}^r(u)}= 0.
		\ee
		That is, $u_n\wtogsp u$ if $u_n\to u$ strongly in $L^1(Q)$ and $TV^r_{\ell^p}(u_n)\to TV_{\ell^p}^r(u)$.

		\item
		We define the space $SV^r(Q)$ by
		\be SV^r(Q):=\bigcap_{p\in[1,+\infty]} \overline{C^\infty(Q)}^{\text{  s-s }(p)} .
		\label{BV_r_p_diff_ss} \ee
		\item
		We define the (standard) $BV^r(Q)$ space by 
		\be\label{BV_r_p_diff}
		BV^r(Q):=\bigcap_{p\in[1,+\infty]}\flp{u\in L^1(Q):\,\, TV^r_{\ell^p}(u)<+\infty}.
		\ee
	\end{enumerate}
\end{define}
\begin{remark}[Equivalence between $TV_{\ell^p}^r$]\label{an_iso_equ}
	Definition \ref{strict_conv_BVr} has several consequences.
	\begin{enumerate}[1.]
		\item
		By \eqref{x_eu_equivalence} we have, for any $1\leq q<p\leq+\infty$,
		\be\label{equ_p_1_p}
		N^{1/p-1/q}TV_{\ell^p}^r(u)\leq TV_{\ell^q}^r(u)\leq TV_{\ell^p}^r(u).
		\ee
		That is, the set $\flp{u\in L^1(Q):\,\,TV^r_{\ell^p}(u)<+\infty}$ is 
		{  actually}
		independent of $p$. As a consequence,
		{  the functional space $BV^r(Q)$, defined in \eqref{BV_r_p_diff},
			satisfies}
		\be
		BV^r(Q)=\flp{u\in L^1(Q):\,\,TV^r_{\ell^2}(u)<+\infty},
		\ee
		without {  any} dependence on the underlying $\ell^p$-norm.
		\item 
		The space $SV^r(Q)$, defined in \eqref{BV_r_p_diff_ss}, enjoys the ``smooth approximation" property: for each $u\in SV^r(Q)$, there exists 
		a sequence $\seqn{u_n}\subseteq C^\infty(Q)\cap BV^r(Q)$ such that \eqref{def_eq_strict_covg} holds. 
		In the case of integer order, i.e. $r=k\in\N$, we do have $SV^k(Q)=BV^k(Q)$ (see for instance \cite{evans2015measure}). 
		However, due to the singularities at the boundary arising from the definition of fractional derivatives, we are unable to prove a smooth approximation result in this case.
		In particular, the construction from \cite{evans2015measure} would not work, unless additional conditions are imposed. 	\end{enumerate}
\end{remark}

\BLK
We conclude this subsection with some definitions and properties of fractional order Sobolev semi-norms.
\begin{define}
	{ 
		Given $q\in[1,+\infty]$, $r>0$}, we define the $r$-order fractional Sobolev space  
	\begin{align*}
		W^{r, {  q}}(Q):=\bigg\{u\in L^ {  q}(Q 
		):\,\, 
		&\text{there	exists } g\in L^1(Q{  ;{\mathbb{M}}^{N\times N^{\lfloor r\rfloor}} })
		\text{ such that }\int_Q u\, \divg_R^r \vp \,dx=\int_Q g
		{ \cdot}\vp\,dx\\
		&\text{ for all test functions } \varphi\in C_c^\infty(Q {  ; {\mathbb{M}}^{N\times N^{\lfloor r\rfloor}} } ) \bigg\}.
	\end{align*}
	Here $g\in L^{  q}(Q {  ;{\mathbb{M}}^{N\times N^{\lfloor r\rfloor}} } 
	)$ is the weak $r$-order fractional derivative of $u$. We equip it with norm
	\be
	\norm{u}_{W^{r,{ q}}_{\ell^p}(Q)}:=\norm{u}_{L^{ q}(Q)}
	+\int_Q\abs{g}_{\ell^p}dx.
	\ee
\end{define} 

\begin{define}
	Let $r=\ir+s$. By $AC^{r,1}(I)$ we denote the set of all functions $w:I\to\R$ admitting a representation
	of the form
	\be\label{repre_AC}
	w(t)=\sum_{i=0}^\ir \frac{c_i}{\Gamma(s+i)}t^{s-1+i}+\mathbb I^r \phi(t),\,\, t\in I\,\,a.e.,
	\ee
	where $c_0,\ldots,c_k\in\R$ and $\phi\in L^1(I)$.
\end{define}

We recall the following results form \cite{MR3144452}.
\begin{theorem}\label{thm_MR3144452}
	Let $r=\ir+s$ be given.
	\begin{enumerate}[1.]
		\item
		\cite[Theorem 7]{MR3144452} { A} function $w\in L^1(I)$ admits the $r$-order derivative if and only if $w\in AC^{r,1}(I)$. In this case, $w$ has the representation \eqref{repre_AC}, and
		\begin{align}\label{d_representable}
			d^{i+s} w(0)=c_i,\,\, i=0,\cdots, k-2,\text{ and }d^r w(t)=\phi(t),\,\, t\in I\,\,a.e..
		\end{align}
		\item
		\cite[Theorem 19]{MR3144452} We have
		\be\label{ac_sobolev_s}
		W^{r,1}(I)=AC^{r,1}(I)\cap L^1(I).
		\ee
	\end{enumerate}
\end{theorem}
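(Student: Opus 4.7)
The plan is to prove part~1 by combining the semigroup property of fractional integration from Theorem~\ref{thm_MR1347689}, the characterization of representable functions from the same theorem, and the kernel computation in Lemma~\ref{power_function_s}. Part~2 will then follow from part~1 by identifying the classical $r$-order derivative produced by the $AC^{r,1}$ representation with the weak derivative, and vice versa.

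For the ``if'' direction of part~1, given $w\in AC^{r,1}(I)$ with representation \eqref{repre_AC}, I would apply $d^r$ term by term. On the integral component, $d^r \mathbb I^{r}\phi = \phi$ by the semigroup property. On each power term $c_i t^{s-1+i}/\Gamma(s+i)$, I would use the Riemann-Liouville decomposition $d^r = (d/dt)^{\ir+1}\,\mathbb I^{1-s}$ together with the elementary Beta-function identity $\mathbb I^{1-s}\bigl(t^{s-1+i}/\Gamma(s+i)\bigr) = t^{i}/\Gamma(i+1)$; since $i\le \ir$, the $(\ir+1)$-st derivative of this polynomial vanishes. For the identification $d^{i+s}w(0)=c_i$, I would apply $d^{i+s}$ to \eqref{repre_AC}: lower-index power terms lie in the kernel of $d^{i+s}$ by Lemma~\ref{power_function_s}, higher-index ones vanish at $t=0$, the $i$-th term contributes exactly $c_i$, and $d^{i+s}\mathbb I^{r}\phi = \mathbb I^{r-i-s}\phi$ vanishes at the origin as an $L^1$ integral on a shrinking interval.

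For the converse direction of part~1, assuming $w\in L^1(I)$ admits an $r$-order derivative $\phi\in L^1(I)$, I would set $v := w - \mathbb I^{r}\phi$ and verify $d^r v = 0$ via the semigroup property. The main step is characterizing $\ker(d^r)\cap L^1(I)$: by the representable-function criterion (Theorem~\ref{thm_MR1347689}, conditions \eqref{inte_frac_cond}--\eqref{bdy_frac_cond}), $v\in \ker(d^r)$ forces $\mathbb I^{\ir+1-r}v$ to be a polynomial of degree at most $\ir$, and applying $\mathbb I^{r}$ to such a polynomial recovers exactly the basis $\{t^{s-1+i}/\Gamma(s+i):\, i=0,\ldots,\ir\}$. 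For part~2, the inclusion $AC^{r,1}(I)\cap L^1(I)\subseteq W^{r,1}(I)$ follows from part~1 together with the integration-by-parts formula for fractional derivatives recorded after Definition~\ref{frac_der_def}, which identifies the classical $\phi$ with the weak derivative; the reverse inclusion uses that any two candidates for the $r$-order derivative differ by an element of $\ker(d^r)$, which part~1 has already placed inside $AC^{r,1}(I)$.

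The principal obstacle I anticipate is the endpoint singularity of the basis $\{t^{s-1+i}\}$: for $i=0$ and $s<1$ the function is unbounded at $0$, so the pointwise evaluations $d^{i+s}w(0)=c_i$ and the integration-by-parts identities over $I$ must be justified by an approximation procedure (e.g.\ truncating $I$ to $[\varepsilon,1]$ and letting $\varepsilon\to 0$). The operator $\mathbb I^{\ir+1-r}$ is the key smoothing tool, sending these singular basis elements into $W^{\ir+1,1}(I)$ as in \eqref{inte_frac_cond}; verifying the boundary conditions \eqref{bdy_frac_cond} then aligns the coefficient identification with membership in the representable class $\mathbb I^{r}(L^1(I))$ of Definition~\ref{frac_represent_I}.
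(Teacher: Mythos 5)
First, a contextual point: the paper does not prove this statement at all --- it is recalled verbatim from \cite{MR3144452} (Theorems 7 and 19 there) and used as a black box, so there is no in-paper argument to compare yours against. Judged on its own, your sketch follows the standard route of that reference: term-by-term differentiation of the representation \eqref{repre_AC} for the ``if'' direction, a characterization of $\ker(d^r)$ for the converse, and an identification of weak with classical derivatives for part 2. The architecture is right, and the Beta-function computation $\mathbb I^{1-s}\bigl(t^{s-1+i}/\Gamma(s+i)\bigr)=t^i/\Gamma(i+1)$ together with $d^r\mathbb I^r\phi=\phi$ does establish $d^r w=\phi$ correctly.

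There are, however, two concrete problems. The coefficient identification is off by one order: applying $d^{i+s}=(d/dt)^{i+1}\mathbb I^{1-s}$ to the $i$-th basis term $c_i t^{s-1+i}/\Gamma(s+i)$ gives $(d/dt)^{i+1}\bigl(c_i t^i/i!\bigr)=0$, so that term is \emph{annihilated}, not mapped to $c_i$; meanwhile the $(i+1)$-th term yields the constant $c_{i+1}$, which does not vanish at the origin. The correct recovery is $c_i=(d^i\mathbb I^{1-s}w)(0)=d^{i+s-1}w(0)$ --- one derivative fewer --- and this is exactly how the paper itself later uses the statement (see $c_{0,n}=(\mathbb I^{1-s}w_n)(0)$ and $c_{1,n}=(d\,\mathbb I^{1-s}w_n)(0)$ in the proof of Lemma \ref{1d_approx_up}); the exponent in \eqref{d_representable} should be read accordingly, and your bookkeeping as written does not close. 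Relatedly, in the converse direction the operator that recovers $v$ from the polynomial $\mathbb I^{1-s}v$ is $d^{1-s}$ (via $d^{1-s}\mathbb I^{1-s}v=v$ and $d^{1-s}t^i=\Gamma(i+1)\,t^{i+s-1}/\Gamma(i+s)$), not $\mathbb I^{r}$. Finally, in part 2 the inclusion $W^{r,1}(I)\subseteq AC^{r,1}(I)$ is not a consequence of part 1: the weak derivative is defined by testing against $C_c^\infty(I)$, so you must show that a function whose \emph{weak} $r$-order derivative vanishes lies in the span of $\{t^{s-1+i}\}$ --- a fractional du Bois--Reymond lemma. Saying that ``any two candidates differ by an element of $\ker(d^r)$, which part 1 has placed inside $AC^{r,1}(I)$'' silently identifies the weak kernel with the classical one, which is precisely the point that needs proof.
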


We remark that, from \eqref{d_representable} and \eqref{ac_sobolev_s}, 
for a given $w\in W^{r,1}(I)$, the corresponding $\phi{  = d^rw}\in L^1(I)$ satisfies $\norm{\mathbb I^r\phi}_{L^1(I)}<+\infty$, 
and hence ${  w}\in \mathbb I^r(L^1(I))$ in view of Definition \ref{frac_represent_I}.
\begin{remark}\label{int_iso_equ}
	Let $p\in[1,+\infty]$ be given, and assume that $u\in C^\infty(Q)\cap BV^{s}(Q)$. Then, 
	{  by \cite[Proposition 3.5]{zhang2015total},}
	we have
	\be\label{TV_sobolevl_equiv}
	TV^s_{\ell^p}(u) =\int_Q \abs{\nabla^s u}_{\ell^p}dx=\int_Q \abs{(\partial_1^s u,\cdots,\partial^s_{ N} u)}_{\ell^p}dx.
	\ee
	\\\\

	Equation \eqref{TV_sobolevl_equiv} allows us to use the fact that the an-isotropic total variation $TV_{\ell^1}$ can be computed axis by axis. For example, 
	{ for} $N=2$, by \eqref{TV_sobolevl_equiv} we have
	\be
	TV^s_{\ell^1}(u) =\int_Q \abs{(\partial_1^s u,\partial_2^su)}_{\ell^1}dx= \int_Q \abs{\partial^s_1 u}dx+\int_Q \abs{\partial^s_2 u}dx.
	\ee
	That is, we are able to separate the integration in the two variables. This will be crucial in allowing us to study
	properties of real order total variation in the multi-dimensional setting, by using results from the (often easier) { one-dimensional} case. 
	
\end{remark}

\subsection{Basic properties of $TV^r$ with fixed order of derivative}\label{sec_smooth_approx}
For brevity, we will only write $TV^r$ without explicit reference to the underlying Euclidean $\ell^p$-norm. 
However, in several arguments it will be advantageous to use the $TV_{\ell^1}^r$ semi-norm (see Remark \ref{int_iso_equ}). 
In such instances, we will thus write $TV_{\ell^1}^r$ to clarify that we are relying on the underlying norm being the Euclidean $\ell^1$-norm.
We first prove a lower semi-continuity result with fixed order $r\in\R^+$.

\begin{theorem}  \label{weak_star_comp_s} 
	Given $u\in \mb(Q)$, { i.e. the space of finite Radon measures
		on $Q$,} and sequence $\seqn{u_n}\subseteq BV^r(Q)$ satisfying one of the following conditions:
	\begin{enumerate}[1.]
		\item
		$u\in L^1(Q)$ and $\seqn{u_n}$ is locally uniformly integrable and $u_n\to u$ a.e.,
		\item 
		$u_n\wtos u$ in $\mb(Q)$
		\item 
		$u_n\wto u$ in $L^p(Q)$, for some $p>1$,
	\end{enumerate}
	then we have
	\be\label{liminf_s_weak}
	\liminf_{n\to\infty} TV^r(u_n)\geq TV^r(u).
	\ee
\end{theorem}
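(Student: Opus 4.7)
The plan is to follow the duality-based approach familiar from the classical lower semi-continuity of $TV$: fix an admissible test field $\varphi\in C_c^\infty(Q;\M^{N\times N^{\ir}})$ with $\|\varphi\|_{\ell^p}^*\le 1$, set $\psi:=\divg^s[\divg^{\ir}\varphi]$, establish
\be \int_Q u_n\,\psi\,dx\longrightarrow\int_Q u\,\psi\,dx \ee
under each of the three hypotheses, and then conclude by taking the supremum over $\varphi$.

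The principal technical step, and the main obstacle, is to establish the regularity $\psi\in L^\infty(Q)\cap C(\bar Q)$. Since $\divg^{\ir}\varphi\in C_c^\infty(Q;\R^N)$ is a smooth compactly supported vector field, this reduces to the one-dimensional claim that for every $\phi\in C_c^\infty((0,1))$ the right-sided R--L derivative $d_R^s\phi$ extends to a bounded continuous function on $[0,1]$. Writing $\operatorname{supp}\phi\subset[\alpha,\beta]\subset(0,1)$, the change of variable $u=t-x$ in the defining integral, together with the vanishing of $\phi$ at $\beta$, yields
\be d_R^s\phi(x)=\frac{-1}{\Gamma(1-s)}\int_0^{\beta-x}\frac{\phi'(u+x)}{u^s}\,du \ee
for all $x\in[0,\beta]$, with $d_R^s\phi(x)=0$ for $x\ge\beta$. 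This expression is manifestly continuous in $x\in[0,1]$ and uniformly bounded by a constant multiple of $\|\phi'\|_\infty$. The nonlocal character of the R--L derivative means that $\psi$ itself is not compactly supported, so the ``test-against-$C_c^\infty$'' shortcut of the classical $TV$ argument is unavailable; the compact support of $\varphi$ away from $\partial Q$ is precisely what prevents boundary singularities of the type $x^{s-1}$ noted in Lemma \ref{power_function_s} from appearing in $\psi$.

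With $\psi\in L^\infty(Q)\cap C(\bar Q)$ in hand, the three limit passages are routine. Under hypothesis~(1), boundedness of $Q$ upgrades local uniform integrability to global uniform integrability, so Vitali's theorem gives $u_n\to u$ strongly in $L^1(Q)$, and the limit follows from $\psi\in L^\infty(Q)$. Under hypothesis~(3), $\psi\in L^\infty(Q)\subset L^{p^*}(Q)$ (since $Q$ is bounded), and the limit is immediate from weak convergence in $L^p(Q)$. Under hypothesis~(2), $\psi\in C(\bar Q)$ pairs with $\mb(Q)$ by duality; if the underlying convention is duality with $C_0(\bar Q)$, one approximates $\psi$ by $\chi_\delta\psi\in C_c(Q)$ using a smooth cutoff $\chi_\delta$ supported away from $\partial Q$, and controls the boundary error
\[\left|\int_Q (1-\chi_\delta)\psi\,u_n\,dx\right|\le \|(1-\chi_\delta)\psi\|_{L^\infty}\,|u_n|(Q)\]
via the uniform bound on $|u_n|(Q)$ (a tightness-type argument using that the $u_n$ and $u$ are finite measures).

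Combining the three cases, for every admissible $\varphi$ one obtains $\int_Q u\psi\,dx\le\liminf_{n\to\infty} TV^r(u_n)$, and taking the supremum over $\varphi$ yields \eqref{liminf_s_weak}.
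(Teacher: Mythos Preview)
Your proposal is correct and follows essentially the same approach as the paper: fix an admissible test field, show that $\psi=\divg^s[\divg^{\ir}\varphi]$ is bounded and continuous on $\bar Q$, pass to the limit under each hypothesis, then take the supremum. The paper packages the regularity of $\psi$ into Lemma~\ref{linftyds} (via the Caputo equivalence \eqref{caputo_eq_R-L}), whereas you derive it directly by the same integration-by-parts computation; your treatment of case~(2), with the explicit $C(\bar Q)$ versus $C_0(\bar Q)$ distinction and the cutoff argument, is slightly more careful than the paper's one-line appeal to weak-$\ast$ duality. One small caveat: the claim that boundedness of $Q$ upgrades local uniform integrability to global uniform integrability is not true in general (mass can concentrate at $\partial Q$), but the paper's invocation of ``dominated convergence'' in case~(1) glosses over the same point, so your argument is no less rigorous than the original.
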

Note that (3) is stronger than (2), but we stated it explicitly since it is a special case widely used in this paper.
To prove Theorem \ref{weak_star_comp_s}, a preliminary result is required. 

\begin{lemma}\label{linftyds}
	Let $\vp\in C_c^\infty(Q)$ be given. Then the following statements hold.
	\begin{enumerate}[1.]
		\item 
		For any fixed $T\in \N$, we have
		\be
		\sup\flp{\norm{d^r\vp}_{L^\infty(Q)}:\,\,{r\in(0,T)} }<+\infty.
		\ee
		\item
		For a.e. $x\in Q$, we have
		\be
		\divg^r\vp(x)\to \divg^\ir\vp(x)\text{ as }r\to\ir^+
		\ee
		and 
		\be
		\divg^r\vp(x)\to \divg^{\lceil r \rceil}\vp(x)\text{ as }r\to \lceil r \rceil^-.
		\ee
	\end{enumerate}
\end{lemma}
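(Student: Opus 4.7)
The plan is to reduce both assertions to the one-dimensional setting, exploiting the fact that, by \eqref{caputo_eq_R-L}, the left R--L derivative of $\vp\in C_c^\infty$ coincides with its Caputo derivative. In several variables, each operator $\partial^s_{i,R}$ appearing in $\divg^s$ acts on $\vp$ as a 1D fractional derivative along $x_i$ with the remaining variables frozen, so $\vp(\cdot,x_2,\ldots,x_N)\in C_c^\infty(I)$ for a.e.\ $(x_2,\ldots,x_N)$; the prefactor $c_s=(1-1/N)s+1/N$ in \eqref{scaled_divg} is continuous in $s$ and uniformly bounded. It therefore suffices to estimate and to take limits in the 1D expression
\be
d^r\vp(x)=\frac{1}{\Gamma(1-s)}\int_0^x\frac{\vp^{(\ir+1)}(t)}{(x-t)^s}\,dt,\qquad r=\ir+s,
\ee
for $\vp\in C_c^\infty(I)$ and $x\in I=(0,1)$.

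For Part 1, I would estimate
\be
|d^r\vp(x)|\le \frac{\|\vp^{(\ir+1)}\|_{L^\infty(I)}}{\Gamma(1-s)}\int_0^x(x-t)^{-s}\,dt=\frac{\|\vp^{(\ir+1)}\|_{L^\infty(I)}\,x^{1-s}}{\Gamma(2-s)}.
\ee
Since $r\in(0,T)$ forces $\ir\le T$, the quantity $\|\vp^{(\ir+1)}\|_{L^\infty}$ is controlled by $\max_{0\le j\le T+1}\|\vp^{(j)}\|_{L^\infty}$; the map $s\mapsto 1/\Gamma(2-s)$ is continuous, hence bounded, on $[0,1]$; and $x^{1-s}\le 1$. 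This yields the desired uniform $L^\infty$ bound.

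For Part 2, fix a non-negative integer $k$. For the right limit $r\to k^+$ one has $\ir=k$ and $s\to 0^+$; since $(x-t)^{-s}\le 1$ on $\{t<x\}\subset I$ when $s\in[0,\tfrac12]$, the integrand is dominated by $|\vp^{(k+1)}(t)|$, and dominated convergence together with $\Gamma(1-s)\to 1$ yields
\be
d^r\vp(x)\to \int_0^x\vp^{(k+1)}(t)\,dt=\vp^{(k)}(x)-\vp^{(k)}(0)=\vp^{(k)}(x)=d^k\vp(x),
\ee
using that $\vp\in C_c^\infty(I)$ forces $\vp^{(k)}(0)=0$. For the left limit $r\to k^-$ with $k\ge 1$ one has $\ir=k-1$ and $s\to 1^-$, so $1/\Gamma(1-s)\to 0$. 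The key is the splitting
\be
d^r\vp(x)=\frac{1}{\Gamma(1-s)}\int_0^x\frac{\vp^{(k)}(t)-\vp^{(k)}(x)}{(x-t)^s}\,dt+\frac{\vp^{(k)}(x)\,x^{1-s}}{\Gamma(2-s)},
\ee
whose second term tends to $\vp^{(k)}(x)=d^k\vp(x)$; the first term, using $|\vp^{(k)}(t)-\vp^{(k)}(x)|\le \|\vp^{(k+1)}\|_{L^\infty}|x-t|$, is dominated by $\|\vp^{(k+1)}\|_{L^\infty}\,x^{2-s}/((2-s)\Gamma(1-s))\to 0$.

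The main obstacle is the singular factor $1/\Gamma(1-s)$ as $s\to 1^-$ in the left-limit case; it is defused by the cancellation splitting above, which replaces the kernel $(x-t)^{-s}$ by one carrying an extra factor $(x-t)^{1-s}$ of vanishing mass, enough to beat the divergence of the Gamma factor. Once the scalar 1D statements are in hand, reassembly into $\divg^r\vp$ via the continuous, bounded factor $c_s$ and summation over the $N$ coordinate directions transfers the estimate and the one-sided limits to the multidimensional operator.
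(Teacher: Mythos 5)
Your Part 1 is essentially the paper's argument: bound the Caputo form of $d^r\vp$ by $\norm{\vp^{(\ir+1)}}_{L^\infty}/\Gamma(2-s)$, observe that $1/\Gamma(2-s)$ is bounded on $s\in[0,1]$, and take the maximum over the finitely many integer parts $\ir\le T$; the reduction to one dimension by freezing the other variables is also how the paper handles $N\ge 2$. Your Part 2, however, takes a genuinely different route. The paper passes to the Laplace transform, writes $\mathcal L\{d^s_c\vp\}(y)=y^s\mathcal L\{\vp\}(y)$, and lets $s\to 0^+$ or $s\to 1^-$ at the level of the transform; this is short but leaves implicit the step from convergence of Laplace transforms back to a.e.\ convergence of the functions themselves. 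Your argument stays entirely on the real-variable side: for $s\to 0^+$ you apply dominated convergence directly to the kernel $(x-t)^{-s}$, and for $s\to 1^-$ you defuse the blow-up of $1/\Gamma(1-s)$ by subtracting $\vp^{(k)}(x)$ inside the integral, gaining a factor $(x-t)^{1-s}$ whose integral is $O(1)$ while $1/\Gamma(1-s)\to 0$. That cancellation splitting is the right idea and is, if anything, more self-contained than the paper's transform argument.

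One slip to repair: in the $s\to 0^+$ case you claim $(x-t)^{-s}\le 1$ for $t<x$ in $I$ and $s\in[0,\tfrac12]$. Since $0<x-t<1$, the quantity $(x-t)^{-s}$ is in fact $\ge 1$ (e.g.\ $(0.01)^{-1/2}=10$), so this cannot serve as a pointwise bound. The dominated convergence still goes through: for $s\in[0,\tfrac12]$ one has $(x-t)^{-s}\le (x-t)^{-1/2}$, and $\abs{\vp^{(k+1)}(t)}(x-t)^{-1/2}$ is an integrable majorant on $(0,x)$, so the conclusion
\be
d^r\vp(x)\to \int_0^x\vp^{(k+1)}(t)\,dt=\vp^{(k)}(x)
\ee
is unaffected. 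With that correction, and keeping in mind that the prefactor $(1-1/N)s+1/N$ in \eqref{scaled_divg} converges to $1/N$ as $s\to 0^+$ (so the limiting object is the scaled sum the paper actually uses, e.g.\ in the $s\searrow 0$ step of Proposition \ref{bergounioux2017fractional}), your proof is complete and valid.
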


\begin{proof}

	We first consider the { one-dimensional} case, i.e. $Q=I=(0,1)$. 
	In view of \eqref{caputo_eq_R-L}, for any $r=\ir+s$, we have
	\begin{align*}
		\abs{d^r_L \varphi(x)}& = \abs{d^r_{L,c}(x)}\leq\frac{1}{\Gamma(1-s)}\int_0^x \frac{\abs{(d^{\ir+1}){ \varphi}(t)}}{(x-t)^{s}}dt\\
		&\leq \frac{\norm{{ \varphi}}_{W^{\ir+1,\infty}(I) } }{\Gamma(1-s)}\int_0^x \frac{1}{(x-t)^{s}}dt
		\leq \frac{\norm{{ \varphi}}_{W^{\ir+1,\infty}(I) }}{(s-1)\Gamma(1-s)}\fmp{(x-t)^{1-s}\bigg|^x_0}
		\leq 
		\frac{\norm{{ \varphi}}_{W^{\ir+1,\infty}(I) }}{(1-s)\Gamma(1-s)},
	\end{align*}
	which implies 
	\be
	\abs{d^r_L { \varphi}(x)} \leq\norm{d^{\ir+1}{ \varphi}}_{L^{\infty}(I)}\cdot\frac{1}{(1-s)\Gamma(1-s)}.
	\ee
	Note also that, by Euler's reflection formula,
	\be
	\sup_{s\in(0, 1)}\frac{1}{\Gamma(1-s)(1-s)}=\sup_{s\in(0,1)}\frac{1}{\Gamma(2-s)}\leq1,
	\ee
	hence, 
	\be
	\norm{d_L^r { \varphi}}_{L^\infty(I)}\leq \norm{d^{\ir+1}
		{ \varphi}}_{L^{\infty}(I)} <+\infty,
	\ee
	and
	\be
	\sup\flp{\norm{d^r_L\vp}_{L^\infty(I)}:\,\,{r\in(0,T)} }\leq \sum_{l=0}^{T}\norm{d^{l+1}\vp}_{L^{\infty}(I)}<+\infty.
	\ee
	The same arguments give the desired results for the right and central sided $R-L$ derivatives.\\\\
	We next prove Statement 2, for the case $0<r<1$. 
	The case $r\geq 1$ is proven using similar arguments. In view of \eqref{caputo_eq_R-L}, we have $d^s\vp=d^s_c \vp$, with
	$d^s_c $ denoting the \emph{Caputo} fractional derivative. Recall the Laplace transform gives
	\be
	\mathcal L\flp{d^s_c \vp}(y)=y^s\mathcal L\flp{\vp}(y)-y^{s-1}\vp(0)=y^s\mathcal L\flp{\vp}(y).
	\ee
	Therefore, we have 
	\be
	\lim_{s\nearrow 1}\mathcal L\flp{d^s_c \vp}(y) = y\,\mathcal L\flp{\vp}(y) 
	\qquad\text{ and }\qquad\lim_{s\searrow 0}\mathcal L\flp{d^s_c \vp}(y) = \mathcal L\flp{\vp}(y),
	\ee
	and hence we conclude  
	\be
	\lim_{s\nearrow 1}d^s_c(\vp)(x)= d\vp(x)\qquad\text{ and }\qquad\lim_{s\searrow 0}d^s_c(\vp)(x)= \vp(x),
	\ee
	as desired. \\\\
	The multi-dimensional case can be directly inferred from { the one-dimensional} case.
	We discuss the two-dimensional case (i.e. $N=2$) as an example. 
	{ 
		To show the first statement, i.e.
		\[ \sup\left\{ \|d^r \varphi\|_{L^\infty(Q)}: r\in(0,T) \right\}<+\infty,
		\qquad \text{for any } \varphi\in C_c^\infty(Q),
		\]
		it suffices to note that
		\begin{align*}
			| \partial_1^r \varphi(x_1,x_2) | & \le \frac{1}{\Gamma(1-s)} \int_0^{x_1} \frac{|\partial_1^{ \ir+1 } \varphi(t,x_2)  |}{(x_1-t
				)^s} d t 
			\qquad (r=\ir+s)\\
			&\le
			\|\nabla \varphi \|_{L^\infty(Q)} \frac{1}{\Gamma(1-s)} \int_0^{x_1} \frac{1}{(x_1-t)^s} d t\le \|\nabla \varphi \|_{L^\infty(Q)} \frac{1}{(1-s)\Gamma(1-s)}.
		\end{align*} 
		To show the second statement, recall that 
		\[\divg^s \vp(x_1,x_2)=\partial^s_1\vp(x_1,x_2)+\partial_2^s\vp(x_1,x_2),\] 
		thus we have $\partial^s_1\vp(x_1,x_2) = d^s\vp(\cdot,x_2)$ for fixed $x_2\in (0,1)$. 
		Then, by using the conclusion in the 1D case,
		we infer that, for $s\to 0^+$,
		\[ \partial_1^s \vp(x_1,x_2) \to \vp(x_1,x_2),\qquad \text{for a.e. $x_1$ and fixed $x_2$}, \]
		and similarly,
		\[ \partial_2^s \vp(x_1,x_2) \to  \vp(x_1,x_2),\qquad \text{for a.e. $x_2$ and fixed $x_1$}. \]
		The proof is thus complete.
	}
\end{proof}

\begin{proof}[Proof of Theorem \ref{weak_star_comp_s}]
	Let $r=\ir+s$ be given, and fix an arbitrary 
	$\vp\in C_c^\infty(Q;\M^{N\times N^{ { \ir} }})$, 
	{  $|\vp|_{\ell^2}^*\le 1$}. In view of \eqref{RLFOD} (or \eqref{RLFODr}) we have
	\be
	TV^r(u_n)\geq \int_Qu_n\, [\divg^s\divg^\ir \vp] \,dx.
	\ee
	{ 
		By Lemma \ref{linftyds}, 
		from
		$\varphi\in C_c^\infty(Q;\M^{N\times N^{ { \ir} }})  $, we get
		$\divg^\ir \varphi$ is also smooth and compactly { supported}, which 
		in turn gives $\divg^s\divg^\ir \vp\in L^\infty(Q)  $.
	}
	
	\medskip
	
	{ 
		Assume} condition (1) holds. Since $\seqn{u_n}$ is locally uniformly integrable and $u_n\to u$ a.e., 
	by the dominated convergence theorem 
	{ 
		\begin{align}
			\limn\, \int_Qu_n\, [\divg^s\divg^\ir \vp] \,dx =\int_Qu\, [\divg^s\divg^\ir \vp] \,dx.\label{new numbered eq 1}
		\end{align}
	}
	If we assume either condition (2), or the stronger (3), then note that
	since $\vp\in C_c^\infty(Q;\mathbb M^{N\times N^\ir})$, we have $\divg^s\divg^\ir\vp\in C(\bar Q)$. Hence, since $\seqn{u_n}\subset L^1(Q)$, in view of the $\text{weak}^\ast$ convergence in $\mb(Q)$ (see \cite[Page 116]{brezis2010functional}), we have again { \eqref{new numbered eq 1}}.

	Thus, in all cases, we have
	\be
	\liminfn\, TV_{\ell^1}^r(u_n)\geq \liminfn\, \int_Qu_n\, [\divg^s\divg^\ir \vp] \,dx =\int_Qu\, [\divg^s\divg^\ir \vp] \,dx.
	\ee
	Taking the supremum over all $\vp\in C_c^\infty(Q;\M^{N\times N^\ir})$ with {  $|\vp|_{\ell^2}^*\le 1$}, we conclude \eqref{liminf_s_weak}, as desired. 
\end{proof}

{ 
	Now we show that it is possible to approximate functions in $SV^r(Q)$ with smooth functions.
	
	\begin{proposition}[strict approximation with smooth functions]\label{approx_smooth}
		Let $r\in\R^+$ and $u\in SV^r(Q)$ be given. Then there exists a sequence $\seqn{u_n}\subset C^\infty(\bar Q)\cap BV^r(Q)$ such that 
		\be\label{strictly_ATV_approx}
		u_n\to u\text{ strongly in }L^1(Q)\text{ and }\limn {TV^r}(u_n)={TV^r}(u).
		\ee
	\end{proposition}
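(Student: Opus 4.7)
The plan is to combine the definitional approximation built into $SV^r(Q)$ with a boundary-moving dilation argument, and to conclude via a diagonal extraction. By the definition \eqref{BV_r_p_diff_ss}, there exists $\{v_m\}_{m\in\N}\subseteq C^\infty(Q)\cap BV^r(Q)$ with $v_m\to u$ strongly in $L^1(Q)$ and $TV^r(v_m)\to TV^r(u)$; thus it suffices to prove that every fixed $v\in C^\infty(Q)\cap BV^r(Q)$ admits a strict approximation by a sequence in $C^\infty(\bar Q)\cap BV^r(Q)$, and then extract diagonally.

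For such a fixed $v$, I would introduce the affine contractions
\[
T_\lambda(x):=\lambda x+\tfrac{1-\lambda}{2}(1,\dots,1),\qquad \lambda\in(0,1),
\]
which map $Q$ into the compactly contained subcube $T_\lambda(Q)\subset\subset Q$, and set $w_\lambda(x):=v(T_\lambda(x))$. Since $v$ is smooth on a neighborhood of $T_\lambda(\bar Q)$, the function $w_\lambda$ is smooth on a neighborhood of $\bar Q$, hence $w_\lambda\in C^\infty(\bar Q)$. The $L^1$ convergence $w_\lambda\to v$ as $\lambda\to 1^-$ is immediate from the continuity of dilation in $L^1$, and Theorem \ref{weak_star_comp_s} supplies the lower bound $\liminf_{\lambda\to 1^-}TV^r(w_\lambda)\ge TV^r(v)$.

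The matching upper bound $\limsup_{\lambda\to 1^-}TV^r(w_\lambda)\le TV^r(v)$ will be the main obstacle. My plan is to perform a change of variables $y=T_\lambda(x)$ in the dual formulation: for admissible $\varphi\in C_c^\infty(Q;\M^{N\times N^{\ir}})$ one rewrites
\[
\int_Q w_\lambda\,\divg^s\divg^{\ir}\varphi\,dx = \lambda^{-N}\int_{T_\lambda(Q)} v(y)\,(\divg^s\divg^{\ir}\varphi)(T_\lambda^{-1}(y))\,dy,
\]
and then identifies the inner kernel with $\divg^s\divg^{\ir}\widetilde\varphi_\lambda(y)$ for a suitably rescaled field $\widetilde\varphi_\lambda$. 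The delicate point is that the Riemann--Liouville kernels $(y_i-t)^{-s}$ are anchored at the fixed boundary $\{y_i=0\}$, so the shift $\tfrac{1-\lambda}{2}$ introduced by $T_\lambda$ produces a remainder term involving integration over the thin strips $(0,\tfrac{1-\lambda}{2})$ that does not appear when one differentiates $v$ directly on $Q$. This remainder has to be controlled via Lemma \ref{linftyds} and dominated convergence, using also the scaling identity $d^r[v(\lambda\,\cdot)]=\lambda^r(d^rv)(\lambda\,\cdot)$ valid for purely dilational substitutions; once this is done, the overall constant $\lambda^{r-N}$ tends to $1$ as $\lambda\to 1^-$, yielding the upper bound. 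A standard diagonal selection $u_n:=w_{m_n,\lambda_n}$ with $\lambda_n\nearrow 1$ chosen fast enough then completes the proof of \eqref{strictly_ATV_approx}.
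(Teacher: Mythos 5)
Your proposal follows essentially the same route as the paper's proof: approximate $u$ by smooth $v_m$ via the definition of $SV^r(Q)$, dilate toward the center of $Q$ (your $T_\lambda$ is exactly the paper's $x\mapsto (x-x_0)/(1+\e)+x_0$ with $\lambda=1/(1+\e)$), obtain the lower bound from Theorem \ref{weak_star_comp_s}, the upper bound from a change of variables in the dual formulation, and conclude by diagonal extraction. The only difference is that you explicitly flag the non-commutation of the Riemann--Liouville kernel with the shifted dilation as a remainder to be controlled, whereas the paper's proof of \eqref{limsup_simple_geometry} performs the substitution in one line without addressing that point; your version is, if anything, the more careful account of the same argument.
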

	
	To this aim, two preliminary results are required. First,
	we recall the \emph{Riesz} representation theorem. }

\begin{theorem}[{\cite[Theorem 1, Section 1.8]{evans2015measure}}]\label{riesz_theorem}
	Let $L:$ $C_c(\rn,\R^M)\to\R$ be a linear functional satisfying
	\be
	\sup\flp{L(\vp):\,\,\vp\in C_c(\rn;\R^M),\,\,{  |\vp|_{\ell^2}^*\le 1},\,\, {  \operatorname{support}} (\vp)\subset K}<+\infty
	\ee
	for some compact set $K\subset \rn$. Then there exists a Radon measure $\mu$ on $\rn$, and a $\mu$-measurable function $\sigma$: $\rn\to\R^M$ such that
	\begin{enumerate}[1.]
		\item
		$\abs{\sigma(x)}=1$ for $\mu$-a.e. $x$, and
		\item
		$L(\vp)=\int_\rn \vp\cdot\sigma d\mu$.
	\end{enumerate}
\end{theorem}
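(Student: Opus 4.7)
The plan is to follow the classical Evans--Gariepy strategy of first building a positive measure from $L$ that controls its variation, and then extracting the direction field $\sigma$ via a scalar Riesz duality.

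First I would construct the \emph{total variation measure} $\mu$. For each open $V\subseteq\rn$ define
\be
\mu(V):=\sup\flp{L(\vp):\,\,\vp\in C_c(V;\R^M),\,\,|\vp|_{\ell^2}^\ast\le 1},
\ee
and extend to arbitrary subsets by $\mu(A):=\inf\flp{\mu(V):A\subset V,\,V\text{ open}}$. The uniform bound assumed on compacta guarantees $\mu$ is locally finite. The nontrivial point is countable subadditivity on open sets: given $V=\bigcup_j V_j$ and $\vp\in C_c(V;\R^M)$ with $|\vp|_{\ell^2}^\ast\le 1$, pick a partition of unity $\seqn{\eta_j}$ subordinate to $\flp{V_j}$ with $\sum_j\eta_j=1$ on $\mathrm{spt}(\vp)$; then $L(\vp)=\sum_j L(\eta_j\vp)\le\sum_j\mu(V_j)$. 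Carath\'eodory's criterion then gives that $\mu$ is a Borel regular, locally finite, hence Radon, measure.

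Second I would construct $\sigma$. For each standard basis vector $e_k$ of $\R^M$ and scalar $f\in C_c(\rn)$, set $L_k(f):=L(fe_k)$. Since $|fe_k|_{\ell^2}=|f|$, the definition of $\mu$ yields
\be
|L_k(f)|\le \int_\rn |f|\,d\mu,
\ee
so each $L_k$ extends uniquely to a bounded linear functional on $L^1(\mu)$ of norm at most $1$. By the scalar $(L^1)^\ast=L^\infty$ duality there exist $\sigma_k\in L^\infty(\mu)$ with $\|\sigma_k\|_{L^\infty(\mu)}\le 1$ and
\be
L_k(f)=\int_\rn f\,\sigma_k\,d\mu,\qquad f\in C_c(\rn).
\ee
Setting $\sigma:=(\sigma_1,\dots,\sigma_M)$ and using linearity of $L$ in components, we obtain $L(\vp)=\sum_k L_k(\vp_k)=\int_\rn \vp\cdot\sigma\,d\mu$, which is statement (2).

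Finally I would verify $|\sigma(x)|_{\ell^2}=1$ for $\mu$-a.e.\ $x$. The bound $|\sigma|_{\ell^2}\le 1$ is immediate from the construction: if $|\sigma|_{\ell^2}>1+\e$ on a Borel set $E$ with $\mu(E)>0$, one approximates $\mathbf{1}_E\,\sigma/|\sigma|_{\ell^2}$ in $L^1(\mu;\R^M)$ by continuous compactly supported $\vp$ with $|\vp|_{\ell^2}^\ast\le 1$ and contradicts the norm bound on $L$ inherited from $\mu$. For $|\sigma|_{\ell^2}\ge 1$, suppose to the contrary that on some Borel $E$ of positive measure one has $|\sigma|_{\ell^2}\le 1-\e$, and pick an open $V\supset E$ with $\mu(V\setminus E)$ arbitrarily small compared to $\mu(E)$; selecting $\vp\in C_c(V;\R^M)$ with $|\vp|_{\ell^2}^\ast\le 1$ and $L(\vp)$ arbitrarily close to $\mu(V)$ gives
\be
\mu(V)\approx L(\vp)=\int_V\vp\cdot\sigma\,d\mu\le (1-\e)\mu(E)+\mu(V\setminus E),
\ee
contradicting the definition of $\mu(V)$ once the excess is made small.

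The main obstacle will be promoting the open-set functional $\mu(V)$ to a genuine Borel regular measure, in particular the partition-of-unity step for countable subadditivity together with Carath\'eodory measurability of Borel sets. The second delicate point is the sharp identity $|\sigma|_{\ell^2}=1$ $\mu$-a.e., which requires approximating the (merely measurable) direction field $\sigma/|\sigma|_{\ell^2}$ by admissible continuous test vector fields without enlarging their $\ell^2$-norm; this is handled by Lusin's theorem applied componentwise together with truncation to keep $|\vp|_{\ell^2}^\ast\le 1$.
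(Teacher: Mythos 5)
The paper gives no proof of this statement: it is quoted verbatim from Evans--Gariepy \cite[Theorem 1, Section 1.8]{evans2015measure} and used as a black box in the proof of Lemma \ref{jinleyidian}, so there is no internal argument to compare yours against. What you have written is precisely the standard proof from that reference --- the variation ``measure'' $\mu(V)=\sup\{L(\vp):\vp\in C_c(V;\R^M),\ |\vp|_{\ell^2}\le 1\}$ on open sets, partition of unity plus the Carath\'eodory criterion to upgrade it to a Radon measure, componentwise $L^1(\mu)$--$L^\infty(\mu)$ duality to produce $\sigma$, and a Lusin-type approximation of $\mathbf{1}_E\,\sigma/|\sigma|$ by admissible test fields to force $|\sigma|=1$ $\mu$-a.e. --- and the outline is sound. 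The one place where you declare something ``immediate'' that actually carries the technical weight is the inequality $|L_k(f)|\le\int_{\rn}|f|\,d\mu$: this does not follow from the definition of $\mu$ alone, but from the intermediate lemma that the functional $\lambda(f):=\sup\{L(\vp):|\vp|_{\ell^2}\le f\}$ is additive on nonnegative $f\in C_c(\rn)$ and satisfies $\lambda(f)=\int_{\rn}f\,d\mu$; proving that additivity (by splitting an admissible $\vp$ for $f_1+f_2$ into admissible pieces for $f_1$ and $f_2$) is the core of the Evans--Gariepy argument and should not be elided if this were written out in full. With that caveat, your proposal is correct and is exactly the proof the cited source supplies.
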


Next, { we show the following crucial result:} 
\begin{lemma}\label{jinleyidian}
	Given a function $u\in BV^r(Q)$, there exists a Radon measure 
	$\mu$ on $Q$ and a $\mu$-measurable function $\sigma: Q\to\rn$ such that 
	\begin{enumerate}[1.]
		\item
		$\abs{\sigma(x)}=1$ $\mu$-a.e., and
		\item
		$\int_Q u\,\divg^r \vp\,dx=-\int_Q \vp\cdot\sigma \,d\mu$ for all $\vp\in C_c^\infty(Q;\rn)$, {  $|\vp|_{\ell^2}^*\le 1$}.
	\end{enumerate}
\end{lemma}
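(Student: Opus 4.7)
The plan is to apply the Riesz representation theorem (Theorem \ref{riesz_theorem}) directly to a linear functional naturally induced by $u$. Concretely, define
\[ L: C_c^\infty(Q;\mathbb{R}^N) \to \mathbb{R}, \qquad L(\varphi) := -\int_Q u\, \divg^r \varphi\, dx. \]
Linearity is immediate from the linearity of fractional differentiation (see \eqref{linearity_frac}).

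The first key step is verifying the boundedness hypothesis of Theorem \ref{riesz_theorem}. Because $u \in BV^r(Q)$, the very definition of $TV^r(u)$ (Definition \ref{isotropic_frac_variation}) combined with the fact that one may replace $\varphi$ by $-\varphi$ gives
\[ |L(\varphi)| \le TV^r(u) < +\infty \]
for every $\varphi \in C_c^\infty(Q;\mathbb{R}^N)$ with $|\varphi|_{\ell^2}^* \le 1$. In particular, the bound in the Riesz hypothesis is satisfied uniformly across \emph{all} compact $K \subset Q$, not merely for each fixed $K$.

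The second step is to extend $L$ from $C_c^\infty(Q;\mathbb{R}^N)$ to $C_c(Q;\mathbb{R}^N)$. Since smooth compactly supported vector fields are dense in compactly supported continuous ones with respect to the uniform norm (equivalently $|\cdot|_{\ell^2}^*$), and the estimate in the previous step shows that $L$ is continuous in this norm, standard density arguments produce a unique continuous linear extension, still bounded by $TV^r(u)$. After extending by zero to $\mathbb{R}^N$, we may view $L$ as a functional on $C_c(\mathbb{R}^N;\mathbb{R}^N)$ in order to match the ambient setting of Theorem \ref{riesz_theorem}.

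Applying Theorem \ref{riesz_theorem} then yields a Radon measure $\mu$ on $Q$ and a $\mu$-measurable $\sigma: Q \to \mathbb{R}^N$ with $|\sigma(x)| = 1$ $\mu$-a.e., satisfying
\[ L(\varphi) = \int_Q \varphi \cdot \sigma\, d\mu \]
for every $\varphi \in C_c(Q;\mathbb{R}^N)$. Restricting back to $\varphi \in C_c^\infty(Q;\mathbb{R}^N)$ and unpacking the definition of $L$ gives the stated identity. The only genuine subtlety is the density extension in the second step (and making sure the compatibility between $|\cdot|_{\ell^2}^*$ and the sup norm on vector-valued test fields is properly handled); everything else is a direct translation of the $BV$ case to our fractional setting, now justified by the $BV^r$ assumption.
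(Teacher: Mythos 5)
Your proposal is correct and follows essentially the same route as the paper: define $L(\vp):=-\int_Q u\,\divg^r\vp\,dx$, bound it by $TV^r(u)\norm{\vp}_{L^\infty(Q)}$ using the definition of $BV^r(Q)$, extend by density (the paper does this explicitly via mollification, $\vp_\e:=\vp\ast\eta_\e\to\vp$ uniformly) to $C_c(Q;\rn)$, and invoke the Riesz representation theorem. The only cosmetic difference is your extra step of extending by zero to $\rn$ to match the ambient space in Theorem \ref{riesz_theorem}, which the paper glosses over.
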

\begin{proof}
	We first define the linear functional 
	\be
	L:\,\, C_c^\infty(Q;\mathbb M^{N\times N^\ir})\to\R,
	\qquad L(\vp):=-\int_Q u\,\divg^r\vp\,dx.
	\ee
	Since $TV^r(u)<+\infty$, we have, in view of \eqref{RLFODr},
	\be
	\sup\flp{\frac1{\norm{\vp}_{L^\infty(Q)}}\int_Qu\, \divg^r \vp \,dx:
		\vp\in C_c^\infty(Q;\M^{N\times N^\ir})}=TV^r(u)<+\infty.
	\ee
	Thus, 
	\be\label{smooth_define}
	\abs{L(\vp)}\leq TV^r(u)\norm{\vp}_{L^\infty(Q)}.
	\ee
	Now we extend by continuity the definition of $L$ to the entire space $C_c(Q;\rn)$:
	for an arbitrary $\vp\in C_c(Q;\rn)$, we consider the mollifications $\vp_\e:=\vp\ast\eta_\e$ 
	(for some {  uninfluential} mollifier $\eta_e$) and, by \cite[Theorem 1, item (ii), Section 4.2]{evans2015measure}, 
	\be\label{uniform_approach}
	\vp_\e\to\vp\text{ uniformly on }Q.
	\ee
	Therefore, by defining
	\be
	\bar L(\vp):=\lime L(\vp_\e)\text{ for }\vp\in C_c(Q;\rn),
	\ee
	in view of \eqref{smooth_define} and \eqref{uniform_approach}, we conclude that 
	\be
	\sup\flp{\bar L(\vp):\text{ for }\vp\in C_c(Q;\M^{N\times N^{k}})\text{ and }{  |\vp|_{\ell^2}^*\le 1}}<+\infty.
	\ee
	Thus, by Theorem \ref{riesz_theorem}, the proof is complete.
\end{proof}

\begin{proof}({ of Proposition \ref{approx_smooth}})
	Let $u\in SV^r(Q)$ be given. In view of Definition \ref{strict_conv_BVr}, there exists a sequence $\seqn{u_n}\subset C^\infty(Q)$ such that 
	\be\label{wtogs_un_u}
	u_n\wtogs u.
	\ee
	Next, let $x_0:=(1/2,\cdots,1/2)$ be the center of $Q$. For sufficiently small $\e>0$, we define
	\be\label{u_n_eps_construct}
	u_n^\e(x):=u_n\Big(\frac{x-x_0}{1+\e}+x_0 \Big)\text{ for }x\in Q.
	\ee
	Note that by construction, $u_n^\e$ will be a scaled version of the restriction of $u_{n}$ to $Q_\e:={Q}/\fsp{1+\e}$.
	Since $u_n\in C^\infty(\overline{Q_\e})$, we have $u_n^\e\in C^\infty(\bar Q)$ too. \\\\
	We next show that $TV^r(u^\e_n)\to TV^r(u_n)$. Let $\e>0$ be fixed, then {  by Lemma \ref{jinleyidian},}
	\be\label{smaller_set_tv}
	TV^r(u_n)\geq \sup\flp{\int_{Q_\e} u_n\divg^r\vp\,dx:\,\,\vp\in C_c^\infty(Q_\e), {  |\vp|_{\ell^2}^*\le 1}}.
	\ee
	On the other hand, for any $\vp\in C_c^\infty (Q)$, we have
	\begin{align*}
		\int_Q u_n^\e\divg^r\vp\,dx 
		&=\int_Q u_n\fsp{(x-x_0)/(1+\e)+x_0}\divg^r\vp(x)\,dx\\
		&\leq (1+\e)^{\ir+1} \int_{Q_\e} u_n(y)\,\divg^r\vp((y-x_0)(1+\e)+x_0) dy\\
		& \overset{\eqref{smaller_set_tv}}{\leq} (1+\e)^{\ir+1} TV^r(u_n).
	\end{align*}
	{  Therefore,}
	\be\label{limsup_simple_geometry}
	TV^r(u_n^\e)\leq (1+\e)^{\ir+1} TV^r(u_n).
	\ee
	On the other hand, in view of \eqref{u_n_eps_construct}, we have $u_n^\e\to u_n$ strongly in $L^1(Q)$. By {  Theorem }  \ref{weak_star_comp_s}, we obtain  
	\be
	\liminf_{\e\to 0}TV^r(u_n^\e)\geq TV^r(u_n).
	\ee
	Combined with \eqref{limsup_simple_geometry}, {  this implies} $u_n^\e\wtogs u_n$. 
	Combined with \eqref{wtogs_un_u}, we infer the existence of a sub-sequence 
	$\seqn{u_{\e_n}}\subset C^\infty(Q)$ such that $u_{\e_n}\wtogs u$, 
	{  hence \eqref{strictly_ATV_approx} is proven}.
\end{proof}

\begin{remark}\label{smooth_use_here}
	We again emphasize that, in view of Remark \ref{int_iso_equ} and \cite[Proposition 3.5]{zhang2015total}, {  for any} $u\in C^\infty(Q)\cap BV^r(Q)$,
	{  it holds}
	\be
	TV_{\ell^1}^{l+s}(u)=\int_Q \abs{\nabla^{l+s}u}dx=\sum_{\abs{\alpha_s}=l+1}\int_0^1\cdots\int_0^1\abs{\partial^{\alpha_s} u(x_1,\ldots,x_N)}dx_1\cdots dx_N.
	\ee
	{ This allows us to compute 
		$TV_{\ell^1}^{l+s}(u)$ 
		by computing 
		$TV^{l'+s}(u)$, $l'=0,1,\cdots, l$, along each coordinate axis.}
\end{remark}

\section{Analytic properties of the functional space $SV^r(Q)$}\label{sec_functional_lsc}

\begin{remark}
	As the goal of this article is to construct models for imaging applications, we assume the function $u\in L^1(Q)$ analyzed here represents an image.
	That is, we could restrict ourself to consider only functions in the space
	\be\label{image_function}
	\IM(Q):=\flp{u\in BV(Q):\,\,\norm{T[u]}_{L^\infty(\partial Q)}\leq 1}.
	\ee
	{  Here $T:Q\longrightarrow \pd Q$ denotes the trace operator.}
	We remark that most of the conclusions in this article are independent of \eqref{image_function}. However, certain results can be improved 
	with \eqref{image_function}. We will always make it explicit when we do assume \eqref{image_function}. We also note that assumption \eqref{image_function} is crucial in the numerical realization of fractional order derivatives. We refer the interested reader to \cite[Section 4]{chen2013fractional} and the references therein.
\end{remark}
\subsection{Compact embedding for $BV^s$ with fixed $s\in(0,1)$}\label{sec_compact_fixed_s}
We start by recalling the following theorem from \cite{brezis2010functional}.
\begin{theorem}[{\cite[Theorem 4.26]{brezis2010functional}}]\label{lp_compactness}
	Let $\mathcal F$ be a bounded set in $L^p(\rn)$ with $1\leq p<+\infty$. Assume that 
	\be
	\lim_{\abs{h}\to 0}\norm{\tau_h f-f}_{L^p(\rn)}=0, \qquad
	{  \tau_h f(\cdot):=f(\cdot+h)}
	\ee
	uniformly in $f\in\mathcal F$.
	Then, the closure of $\mathcal F\lfloor \Om$ in $L^p(\Om)$ is compact for any measurable set $\Om\subset \rn$ with finite measure.
\end{theorem}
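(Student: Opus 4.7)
The plan is to establish this classical Kolmogorov-Riesz-Fréchet compactness criterion by combining mollification with the Arzelà-Ascoli theorem. Let $\rho_\varepsilon$ be a standard smooth mollifier with $\operatorname{supp}(\rho_\varepsilon) \subset B(0,\varepsilon)$, and set $\mathcal F_\varepsilon := \{\rho_\varepsilon * f : f \in \mathcal F\}$. I would prove total boundedness of $\mathcal F|_\Om$ in $L^p(\Om)$ by approximating via $\mathcal F_\varepsilon$, showing that (i) $\mathcal F_\varepsilon|_\Om$ is relatively compact in $L^p(\Om)$ for each $\varepsilon$, and (ii) $\mathcal F_\varepsilon$ is close to $\mathcal F$ \emph{uniformly} in $f$ as $\varepsilon\to 0$.

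For (i), fix $\varepsilon>0$ and observe that Hölder's inequality gives
\begin{equation}
\norm{\rho_\varepsilon * f}_{L^\infty(\rn)} \leq \norm{\rho_\varepsilon}_{L^{p^*}(\rn)} \norm{f}_{L^p(\rn)} \leq C(\varepsilon),
\end{equation}
so $\mathcal F_\varepsilon$ is uniformly bounded. Classical equicontinuity follows from
\begin{equation}
|(\rho_\varepsilon * f)(x) - (\rho_\varepsilon * f)(y)| \leq \norm{\rho_\varepsilon(x-\cdot) - \rho_\varepsilon(y-\cdot)}_{L^{p^*}(\rn)} \sup_{f\in\mathcal F}\norm{f}_{L^p(\rn)},
\end{equation}
which tends to $0$ as $|x-y|\to 0$, uniformly in $f$, since $\rho_\varepsilon$ is uniformly continuous. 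Arzelà-Ascoli then yields relative compactness of $\mathcal F_\varepsilon|_{\bar\Om}$ in $C(\bar\Om)$, and since $|\Om|<+\infty$, also in $L^p(\Om)$.

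For (ii), by Minkowski's integral inequality,
\begin{equation}
\norm{\rho_\varepsilon * f - f}_{L^p(\rn)} \leq \int_{\rn} \rho_\varepsilon(y)\, \norm{\tau_{-y} f - f}_{L^p(\rn)}\, dy,
\end{equation}
and the right-hand side tends to $0$ as $\varepsilon\to 0$ \emph{uniformly} in $f\in\mathcal F$, by the translation-equicontinuity hypothesis. Combining: given $\eta>0$, choose $\varepsilon$ so small that $\sup_{f\in\mathcal F}\norm{\rho_\varepsilon * f - f}_{L^p(\Om)} < \eta/2$, then cover the precompact set $\mathcal F_\varepsilon|_\Om$ by finitely many $L^p(\Om)$-balls of radius $\eta/2$; enlarging each to radius $\eta$ gives a finite cover of $\mathcal F|_\Om$. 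Since $\eta$ was arbitrary, $\mathcal F|_\Om$ is totally bounded in $L^p(\Om)$, hence relatively compact by completeness.

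The main delicate point is the \emph{uniformity} in $f$ of the convergence $\rho_\varepsilon * f \to f$: without the equicontinuity-in-translation hypothesis, mollification would only provide pointwise (in $f$) convergence, which is insufficient for total boundedness. Everything else reduces to standard Arzelà-Ascoli once this uniform approximation is available; the finite measure of $\Om$ enters only to pass from uniform closeness on compact sets to $L^p$ closeness.
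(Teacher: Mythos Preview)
The paper does not prove this theorem: it is quoted verbatim from \cite[Theorem 4.26]{brezis2010functional} as a tool, with no proof supplied. So there is no ``paper's own proof'' to compare against; your proposal is in fact the standard Kolmogorov--Riesz--Fr\'echet argument that Brezis himself gives.

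That said, one step in your write-up is not quite right as stated. You invoke Arzel\`a--Ascoli on $\mathcal F_\varepsilon|_{\bar\Om}$ in $C(\bar\Om)$, but $\Om$ is only assumed to have finite measure, not to be bounded, so $\bar\Om$ need not be compact and Arzel\`a--Ascoli does not apply directly. The fix is easy and is implicit in the standard proof: for fixed $\varepsilon$ the family $\mathcal F_\varepsilon$ is uniformly bounded in $L^\infty$ by your estimate $\|\rho_\varepsilon * f\|_{L^\infty}\le C(\varepsilon)$, hence for any $R>0$
\[
\int_{\Om\setminus B_R} |\rho_\varepsilon * f|^p\,dx \;\le\; C(\varepsilon)^p\,|\Om\setminus B_R| \;\longrightarrow\; 0
\quad\text{as }R\to\infty,
\]
uniformly in $f\in\mathcal F$. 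Thus one first chooses $R$ large so that this tail is below $\eta/4$, applies Arzel\`a--Ascoli on the compact set $\overline{\Om\cap B_R}$ to get a finite $\eta/4$-net there, and combines. With this adjustment your argument is complete and matches the textbook proof.
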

The main result of Section \ref{sec_compact_fixed_s} reads as follows.
\begin{theorem}[compact embedding $BV^s(Q) \hookrightarrow L^1(Q)$]\label{ATV_real_embedding}
	Let $s\in (0,1)$ be given. Assume $\seqn{u_n}\subset \text{ss-}BV^s(Q)$ satisfy
	\be\label{image_condition_infty}
	\sup\flp{\norm{u_n}_{L^\infty(\partial Q)}+\norm{u_n}_{BV^s(Q)}:\,\, n\in\N}<+\infty.
	\ee
	Then, there exists $u\in BV^s(Q)$ such that, upon sub-sequence, $u_n\to u$ strongly in $L^1(Q)$.
\end{theorem}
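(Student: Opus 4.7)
The plan is to apply the Fr\'echet-Kolmogorov compactness criterion (Theorem \ref{lp_compactness}), which, since the hypothesis gives a uniform $L^1(Q)$-bound via $BV(Q)\hookrightarrow L^1(Q)$, reduces to verifying the uniform fractional equicontinuity estimate
\begin{equation*}
	\sup_{n\in\N}\;\|\tau_h u_n - u_n\|_{L^1(Q_h)} \;\le\; C\,|h|^{s},\qquad Q_h := Q\cap(Q-h),
\end{equation*}
for all sufficiently small $h$. Once this is secured, a diagonal extraction yields $u_n\to u$ strongly in $L^1(Q)$, and Theorem \ref{weak_star_comp_s} places the limit in $BV^s(Q)$.

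\textbf{Reduction to smooth functions and to coordinate axes.} Using the definition of $SV^s(Q)$ together with Proposition \ref{approx_smooth}, I strictly approximate each $u_n$ by smooth $\tilde u_n^k\in C^\infty(\bar Q)\cap BV^s(Q)$; a diagonal procedure, combined with a truncation argument in a boundary collar to preserve the $L^\infty(\partial Q)$ bound, lets me assume directly that $u_n\in C^\infty(\bar Q)$ with the uniform bounds on $TV^s(u_n)$ and $\|u_n\|_{L^\infty(\partial Q)}$ intact. By Remarks \ref{int_iso_equ} and \ref{smooth_use_here} and the norm equivalence of Lemma \ref{TVlpq_lemma},
\begin{equation*}
	\sum_{i=1}^N \int_Q |\partial_i^s u_n(x)|\,dx \;=\; TV_{\ell^1}^s(u_n) \;\le\; \sqrt{N}\,TV^s(u_n) \;\le\; C,
\end{equation*}
so it is enough to control translations along each coordinate axis and then invoke Fubini in the transverse variables.

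\textbf{One-dimensional fractional translation estimate.} Fix $h=h_1 e_1$ with $h_1>0$ small and $x'=(x_2,\ldots,x_N)$, and set $v(t):=u_n(t,x')$, a smooth function on $[0,1]$ with $\phi:=d^s v\in L^1(I)$. Theorem \ref{thm_MR3144452} (with $\lfloor s\rfloor=0$) yields the representation
\begin{equation*}
	v(t) \;=\; \frac{c_0(x')}{\Gamma(s)}\,t^{s-1} \,+\, \mathbb{I}^s(d^s v)(t).
\end{equation*}
The uniform $L^\infty(\partial Q)$ trace bound, which I have carried through the smoothing step, forces $c_0(x')=0$ for a.e.\ $x'$: the profile $t^{s-1}$ blowing up at $t=0$ is incompatible with $v$ being continuous on $[0,1]$. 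Writing then $v=\mathbb{I}^s\phi$ and splitting
\begin{equation*}
	\mathbb{I}^s\phi(t+h_1) - \mathbb{I}^s\phi(t) \;=\; \frac{1}{\Gamma(s)}\int_t^{t+h_1}\!\frac{\phi(y)}{(t+h_1-y)^{1-s}}\,dy \;+\; \frac{1}{\Gamma(s)}\int_0^t\!\phi(y)\bigl[(t+h_1-y)^{s-1}-(t-y)^{s-1}\bigr]dy,
\end{equation*}
applying Fubini to each piece, and using the elementary identity $\int_0^R[u^{s-1}-(u+h_1)^{s-1}]\,du = s^{-1}\bigl(R^s - (R+h_1)^s + h_1^s\bigr) = O(h_1^s)$ as $h_1\to 0$, I obtain the desired bound
\begin{equation*}
	\int_0^{1-h_1} |v(t+h_1) - v(t)|\,dt \;\le\; C\,h_1^{s}\,\|d^s v\|_{L^1(I)},
\end{equation*}
with $C$ depending only on $s$.

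\textbf{Conclusion and main obstacle.} Integration over $x'\in(0,1)^{N-1}$ gives $\|\tau_{h_1 e_1} u_n - u_n\|_{L^1(Q_{h_1 e_1})}\le C h_1^{s}\|\partial_1^s u_n\|_{L^1(Q)}\le C h_1^s$, with the identical argument for each coordinate direction; decomposing a general small $h$ along the axes yields the uniform $O(|h|^s)$ estimate, and Theorem \ref{lp_compactness} produces a strongly $L^1(Q)$-convergent subsequence, whose limit lies in $BV^s(Q)$ by Theorem \ref{weak_star_comp_s}. The main technical obstacle is the second piece of the splitting: the difference $(t+h_1-y)^{s-1}-(t-y)^{s-1}$ cannot be dominated pointwise by a crude $L^1$ argument, and extracting the sharp $h_1^s$ rate requires the exact cancellation identity above. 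A secondary but conceptually delicate point is showing that the strict smooth approximation of $u_n\in SV^s(Q)\cap BV(Q)$ can be arranged to preserve the $L^\infty(\partial Q)$ trace bound, so that the vanishing $c_0(x')=0$ in the Riemann-Liouville representation is genuinely available for the limiting argument.
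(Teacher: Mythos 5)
Your proposal is correct and follows the same global architecture as the paper's proof (Kolmogorov--Riesz compactness via Theorem \ref{lp_compactness}, reduction to smooth functions by strict approximation, and slicing to reduce the $N$-dimensional translation estimate to a one-dimensional one), but the core one-dimensional estimate is obtained by a genuinely different route. The paper's Proposition \ref{one_d_cuoweixiangjian} invokes the fractional mean value formula of \cite[Corollary 4.3]{MR2237634} for the ``revised'' derivative $\hat d^s_L$, writes $\hat d^s_L w = d^s_L w - w(0)\Gamma(1-s)^{-1}x^{-s}$, and integrates the resulting pointwise bound; the trace bound enters through the correction term and through the zero-extension to $\R$. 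You instead use the $AC^{s,1}$ representation of Theorem \ref{thm_MR3144452} to write each slice as $v=\tfrac{c_0}{\Gamma(s)}t^{s-1}+\mathbb{I}^s\phi$, observe that boundedness of $v$ near $t=0$ forces $c_0=(\mathbb{I}^{1-s}v)(0)=0$ (the same computation the paper itself uses in Proposition \ref{bergounioux2017fractional}), and then estimate the $L^1$-modulus of continuity of $\mathbb{I}^s$ directly by Fubini and the exact antiderivative identity $\int_0^R[u^{s-1}-(u+h)^{s-1}]\,du=s^{-1}(R^s-(R+h)^s+h^s)\le s^{-1}h^s$. Both computations check out and yield the same $O(h^s)$ rate with constant $2/\Gamma(s+1)$. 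What your route buys: it is more self-contained (no appeal to the fractional mean value theorem), and it isolates exactly where the boundary hypothesis is used --- only to kill the singular term $t^{s-1}$, which is automatic once you have reduced to smooth approximants; indeed your final bound $Ch^s\|d^s v\|_{L^1}$ carries no trace term at all, and the $\mathbb{I}^s\phi$ representation even gives the boundary-strip smallness $\|v\|_{L^1(0,\delta)}+\|v\|_{L^1(1-\delta,1)}\le C\delta^s\|\phi\|_{L^1}$ needed to upgrade local to global $L^1(Q)$ compactness, a point the paper handles more loosely with an $O(|h|)$ term involving $\|w\|_{L^\infty}$. The one place you should be more careful is the passage from the estimate on $Q_h=Q\cap(Q-h)$ to the hypothesis of Theorem \ref{lp_compactness}, which is stated for the zero-extensions on all of $\R^N$; the boundary-strip bound just mentioned closes this gap, so it is a presentational rather than a mathematical defect. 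Your worry about preserving the $L^\infty(\partial Q)$ bound under the strict smooth approximation is likewise moot in your scheme, since $c_0=0$ needs only boundedness of the smooth approximants near the boundary and the final constant depends only on $TV^s$, which the strict approximation controls by construction.
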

We prove Theorem \ref{ATV_real_embedding} in several steps. We first recall a revisited left sided $R-L$ $s$-order derivative
from \cite{MR2237634},
which we denote by $\hat d_L^s w(x)$, as follows:
\be\label{revisied_RL}
\hat d^s_Lw(x):=\frac1{\Gamma(1-s)} \frac d{dx}\int_0^x \frac{w(t)-w(0)}{\fsp{x-t}^s}dt.
\ee
That is, the singularity of $d^s_L$ at the boundary $t=0$ is removed. 
Moreover, we remind that in dimension one, the semi-norms $TV$ and $TV_{\ell^1}$ are equivalent.\BLK
\begin{proposition}\label{one_d_cuoweixiangjian}
	Let $w\in BV^s(I)\cap C^\infty(I)$ be given. Let $TV^s(w,I)$ denote the $s$-order total variation of $w$ in $I=(0,1)$ and
	\be
	\tilde w(x):=
	\begin{cases}
		w(x)&\text{ for }x\in I,\\
		0&\text{ if }x\in \R\setminus I.
	\end{cases}
	\ee
	Then we have
	\be\label{dengchaxiangjian_p1}
	\norm{\tau_h\tilde w-\tilde w}_{L^1(\R)}\leq h^s \fsp{TV^s(w)+\norm{w}_{L^\infty(\partial I)}}
	{  +O(|h|) },
	\ee
	provided that $\norm{w}_{L^\infty(\partial I)}<+\infty$.
\end{proposition}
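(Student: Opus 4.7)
The plan is to split the $L^1$-translation difference into a bulk piece on $[0, 1-h]$ and two boundary overflow pieces, then to control each via the fractional integral representation afforded by Theorem \ref{thm_MR3144452}. By symmetry, I restrict to $h>0$; since $\tilde w$ vanishes outside $I$,
\begin{equation*}
\|\tau_h \tilde w - \tilde w\|_{L^1(\R)} = \int_0^h |w(y)|\,dy + \int_0^{1-h} |w(x+h)-w(x)|\,dx + \int_{1-h}^1 |w(x)|\,dx.
\end{equation*}

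The key observation is that the hypothesis $\|w\|_{L^\infty(\partial I)}<+\infty$ forces the coefficient of the singular term $t^{s-1}$ in the representation \eqref{repre_AC} to vanish, so that Theorem \ref{thm_MR3144452} combined with the modified left-sided derivative $\hat d^s_L$ from \eqref{revisied_RL} yields
\begin{equation*}
w(x) - w(0) = \mathbb I^s[\hat d^s_L w](x), \qquad x \in I,
\end{equation*}
where a direct computation gives $\hat d^s_L w = d^s_L w - w(0)\,x^{-s}/\Gamma(1-s)$. Writing $\psi := \hat d^s_L w$, the triangle inequality and $\int_0^1 x^{-s}\,dx = 1/(1-s)$ yield
\begin{equation*}
\|\psi\|_{L^1(I)} \leq TV^s(w) + \frac{|w(0)|}{\Gamma(2-s)} \leq TV^s(w) + C\,\|w\|_{L^\infty(\partial I)},
\end{equation*}
for an absolute constant $C$ independent of $s \in (0,1)$.

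For the bulk term I substitute $w(x+h)-w(x) = \mathbb I^s\psi(x+h) - \mathbb I^s\psi(x)$ and split the difference of kernels into the portion with $t \in (0,x)$, which produces $[(x+h-t)^{s-1}-(x-t)^{s-1}]$, and the portion with $t \in (x,x+h)$, which produces $(x+h-t)^{s-1}$. Fubini's theorem followed by the explicit one-dimensional identity
\begin{equation*}
\int_0^{\infty} \bigl[u^{s-1} - (u+h)^{s-1}\bigr]\,du = \frac{h^s}{s}
\end{equation*}
reduces both pieces to an estimate of the form $\frac{h^s}{\Gamma(s+1)}\|\psi\|_{L^1(I)}$, which combines with the bound on $\|\psi\|_{L^1}$ above to give the desired contribution of order $h^s\bigl(TV^s(w)+\|w\|_{L^\infty(\partial I)}\bigr)$.

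For the overflow terms I insert $w(y) = w(0) + \mathbb I^s\psi(y)$ to obtain $\int_0^h |w(y)|\,dy \leq h|w(0)| + \int_0^h|\mathbb I^s\psi(y)|\,dy$, and a single application of Fubini bounds the second summand by $\frac{h^s}{\Gamma(s+1)}\|\psi\|_{L^1}$; the analogous estimate for $\int_{1-h}^1 |w(x)|\,dx$ follows by replacing $\hat d^s_L$ with its right-sided counterpart, justified by Remark \ref{left_use_whole}. The terms $h|w(0)|$ and $h|w(1)|$, each bounded by $h\,\|w\|_{L^\infty(\partial I)}$, assemble into the $O(|h|)$ remainder. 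The main technical difficulty lies in the boundary singularity inherent to $d^s_L$ at $x=0$, which would otherwise prevent a clean bound: the passage through $\hat d^s_L$ is precisely what converts the trace $w(0)$ into a bounded additive contribution to the $L^1$ norm of the fractional derivative, and makes the right-hand side independent of singular powers of $s$.
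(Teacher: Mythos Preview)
Your proof is correct and follows a genuinely different line from the paper's. The paper invokes an external fractional mean value formula from \cite{MR2237634},
\[
w(x+h) = w(x) + \frac{h^s}{\Gamma(1+s)}\,\hat d_L^s w(x+\theta h),
\]
and bounds the bulk integral $\int_{I_h}|w(x+h)-w(x)|\,dx$ pointwise before integrating (with some looseness in how the shifted argument $x+\theta h$ is absorbed under the integral sign). You instead pass through the integral representation $w(x)-w(0)=\mathbb I^s\psi(x)$ and reduce the bulk to a kernel-difference estimate via Fubini and the explicit identity $\int_0^\infty[u^{s-1}-(u+h)^{s-1}]\,du = h^s/s$. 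Your route is more self-contained, avoiding the citation to the mean value theorem, and tracks constants more transparently; the paper's route is shorter once that formula is granted.

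One small point deserves attention: your treatment of the right overflow $\int_{1-h}^1 |w|$ by passing to $\hat d_R^s$ would introduce $\|d_R^s w\|_{L^1(I)}$, which is not \emph{a priori} the same as $TV^s(w)$ under the paper's convention (Definition~\ref{isotropic_frac_variation} fixes the left-sided derivative on $u$). You do not actually need the right-sided version here: staying with the left representation $w(x)=w(0)+\mathbb I^s\psi(x)$ on $[1-h,1]$, Fubini and the concavity inequality $(1-t)^s-(1-h-t)^s\le h^s$ give
\[
\int_{1-h}^1 |\mathbb I^s\psi(x)|\,dx \le \frac{h^s}{\Gamma(s+1)}\|\psi\|_{L^1(I)}
\]
directly, so both overflow terms are handled by the same $\psi$.
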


\begin{proof} 
	We start by recalling the fractional mean value formula for $0<s<1$ from \cite[Corollary 4.3]{MR2237634}:  for $h\in\R$ small,
	\be\label{revised_mean_value}
	w(x+h)=w(x)+h^s\frac{1}{\Gamma(1+s)}\hat d_L^sw(x+\theta h),
	\ee
	where the revised left-sided R-L $s$-order derivative $\hat d^s_L$ is defined in \eqref{revisied_RL} and $\theta\in\R$, say $\theta(h)$, depends upon $h$, and satisfies
	\be
	\lim_{h\to 0}\theta^s(h)=\frac{\Gamma(1+s)^2}{\Gamma(1+2s)}.
	\ee
	We note that 
	\begin{align}
		\hat d^s_Lw(x)&=\frac1{\Gamma(1-s)} \frac d{dx}\int_0^x \frac{w(t)-w(0)}{\fsp{x-t}^s}dt\notag \\
		&= \frac1{\Gamma(1-s)} \frac d{dx}\int_0^x \frac{w(t)}{\fsp{x-t}^s}dt - \frac1{\Gamma(1-s)} \frac d{dx}\int_0^x \frac{w(0)}{\fsp{x-t}^s}dt \notag \\
		&=d^s_L w(x)-w(0)\frac1{\Gamma(1-s)}\frac1{x^s}.\label{revised_mean_value2}
	\end{align}
	We also, by the definition of $\tilde w$, summarize the following 4 cases (w.l.o.g we only consider $\abs{h}<0.1$).
	\begin{enumerate}[{Case} 1.]
		\item Both $x$ and $(x+h)\in\R\setminus I$. In this case we have
		\be
		\abs{\tilde w(x+h)-\tilde w(x)}=0;
		\ee
		\item
		$x\in I$, $h<0$ and $x+h<0$. In this case we have $0<x<-h$ and
		\be
		\abs{\tilde w(x+h)-\tilde w(x)}\leq \abs{\tilde w(x+h)- w(0)}+\abs{w(0)- w(x)}=\abs{w(0)}+\abs{w(0)- w(x)};
		\ee
		\item
		$x\in I$, $h>0$ and $x+h>1$. In this case we have $0<1-x<h$ and
		\be
		\abs{\tilde w(x+h)-\tilde w(x)}\leq \abs{\tilde w(x+h)- w(1)}+\abs{w(1)- w(x)}=\abs{w(1)}+\abs{w(1)- w(x)};
		\ee
		\item
		Both $x$ and $x+h\in I$. In this case we have 
		\be
		\abs{\tilde w(x+h)-\tilde w(x)}=\abs{ w(x+h)- w(x)}.
		\ee
	\end{enumerate}
	We now claim \eqref{dengchaxiangjian_p1}. 
	In any of above cases, we could deduce that 
	\be\label{revised_mean_value3}
	\norm{\tilde \tau_hw-\tilde w}_{L^1(\R)}\leq \abs{I^h\setminus I_h}\norm{w}_{L^\infty(\R)}+\norm{ \tau_hw- w}_{L^1(I_h)},
	\ee
	where $I_h=(\abs{h},1-\abs{h})$ and $I^h=(-\abs{h},1+\abs{h})$.
	{  Clearly, the first term $ \abs{I^h\setminus I_h}\norm{w}_{L^\infty(\R)}$ is of order $O(|h|)$.}
	Next, for $x$ and $x+h\in I$, we observe that
	\begin{align*}
		\abs{\tilde w(x+h)-\tilde w(x)}&\leq h^s\frac{1}{\Gamma(1+s)}\abs{\hat d_L^sw(x+\theta h)}\\
		&\leq h^s\frac{1}{\Gamma(1+s)}\fmp{\abs{d^s_L w(x)}+w(0)\frac1{\Gamma(1-s)}\frac1{x^s}},
	\end{align*}
	where we used \eqref{revised_mean_value} and \eqref{revised_mean_value2}.\\\\
	That is, we have
	\be
	\norm{\tilde w(x+h)-\tilde w(x)}_{L^1(I_h)}\leq h^s\frac{1}{\Gamma(1+s)}\fmp{TV^s(w,I)+\norm{w}_{L^\infty(\partial I)}[\Gamma(1-s)(1-s)]^{-1}},
	\ee
	and combining with \eqref{revised_mean_value3} we obtain 
	\be
	\norm{\tilde \tau_hw-\tilde w}_{L^1(\R)}\leq h^s\fmp{[\Gamma(1-s)(1-s)]^{-1}+[\Gamma(1+s)]^{-1}}\fmp{TV^s(w)+\norm{w}_{L^\infty(\partial I)}}{  +O(|h|) },
	\ee
	hence \eqref{dengchaxiangjian_p1}.
\end{proof}

\begin{proof}(of Theorem \ref{ATV_real_embedding})
	The main idea of the proof is to combine an approximation and a slicing argument.
	We only write the proof for the case $N=2$, as the general case $N\geq 3$ can be obtained similarly. Let $\seqn{u_n}\subset BV^s(Q)$ be given, such that the assumption \eqref{image_condition_infty} holds. {  Define}
	\be
	\tilde u_n(x):=
	\begin{cases}
		u_n(x) &\text{ if }x\in Q,\\
		0 &\text{ if }x\in \R^2\setminus Q.
	\end{cases}
	\ee
	Then, by Proposition \ref{one_d_cuoweixiangjian}, we have, for any given $x_2\in \R$ and small $\varepsilon$,
	\be
	\norm{\tau_\varepsilon \tilde u_n( {  x_1 } ,x_2)-\tilde u_n({  x_1 },x_2)}_{L^1(\R)}\leq \varepsilon^s C_s [TV_{\ell^1}(u_n({  x_1 },x_2),I)+\norm{u_n}_{L^\infty(\partial Q)}]   {  +O(|\varepsilon|) },
	\ee
	where $C_s:=\fmp{[\Gamma(1-s)(1-s)]^{-1}+[\Gamma(1+s)]^{-1}}$. Thus, we have, for vector  $h\in \R^2$ with small
	norm and parallel to the $x_1$ axis, 
	\be
	\norm{\tau_h \tilde u_n-\tilde u_n}_{L^1(\R^2)}\leq h^s C_s \fmp{TV_{\ell^1}^s(u_n)+\norm{u_n}_{L^\infty(\partial Q)}}{  +O(|h|) }.
	\ee
	Similarly, the above estimate holds for $h\in \R^2$ with small norm and parallel to the $x_2$ axis.
	For a general direction vector $h\in\R^2$, we write $h=(h_1,h_2)$, and 
	\begin{align*}
		\norm{\tau_h \tilde u_n-\tilde u_n}_{L^1(\R^2)}&\leq \norm{\tau_{h_1} \tilde u_n-\tilde u_n}_{L^1(\R^2)}+\norm{\tau_{h_2} \tilde u_n-\tilde u_n}_{L^1(\R^2)}\\
		&\leq 2h^s C_s C\fmp{\norm{u_n}_{BV^s(Q)}+\norm{u_n}_{L^\infty(\partial Q)}}{  +O(|h|) }.
	\end{align*}
	Thus, in view of Theorem \ref{lp_compactness}, there exists $u\in L^1(Q)$
	such that $\tilde u_n\to u$ strongly in $L^1(Q)$. That is, $u_n\to u$ strongly in $L^1(Q)$, and the
	proof is complete.
\end{proof}

\begin{corollary}
	Given a sequence $\seqn{u_n}\subset BV^s(Q)$ such that the assumptions of Theorem \ref{ATV_real_embedding} hold, then, up to a sub-sequence, there exists $u\in BV^s(Q)$ 
	such that 
	\be
	u_n\to u\text{ strongly in }L^1(Q)\quad\text{ and }
	\quad\liminfn \,TV^s(u_n)\geq TV^s(u).
	\ee
\end{corollary}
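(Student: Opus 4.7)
The plan is to combine Theorem \ref{ATV_real_embedding}, which supplies the $L^1$-compactness, with Theorem \ref{weak_star_comp_s}, which gives the lower semi-continuity of $TV^s$. Since the hypotheses of Theorem \ref{ATV_real_embedding} are exactly those of the corollary, the strong $L^1$ convergence $u_n \to u$ along a subsequence (not relabeled) is immediate, and yields a candidate limit $u \in L^1(Q)$.

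To upgrade this to lower semi-continuity, I would verify condition (1) of Theorem \ref{weak_star_comp_s}. First, extract a further subsequence (again not relabeled) so that $u_n \to u$ pointwise a.e.\ in $Q$; this is the standard consequence of $L^1$-convergence. Next, observe that an $L^1$-convergent sequence on the bounded domain $Q$ is automatically equi-integrable (in particular locally uniformly integrable), for example by Vitali's convergence theorem applied in reverse to the convergent sequence. With both a.e.\ convergence and local uniform integrability in hand, Theorem \ref{weak_star_comp_s}(1) applies and delivers
\begin{equation*}
\liminf_{n\to\infty} TV^s(u_n) \geq TV^s(u).
\end{equation*}

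Finally, the membership $u \in BV^s(Q)$ is a free by-product: since the hypothesis \eqref{image_condition_infty} gives $\sup_n TV^s(u_n) < +\infty$, the lower semi-continuity inequality above forces $TV^s(u) < +\infty$, and $u \in L^1(Q)$ was already secured by the strong $L^1$ convergence. No further compactness argument is required, and there is essentially no obstacle here—the corollary is a direct packaging of the two main theorems of the section, with the only mild point being the routine extraction of an a.e.-convergent subsequence needed to invoke Theorem \ref{weak_star_comp_s}(1) rather than one of its weak-convergence variants.
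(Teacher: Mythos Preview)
Your proposal is correct and follows the same approach as the paper, whose proof is the single line ``The proof is done by combining Theorems \ref{ATV_real_embedding} and \ref{weak_star_comp_s}.'' One minor simplification: since strong $L^1$ convergence already implies weak-$\ast$ convergence in $\mb(Q)$, you could invoke condition (2) of Theorem \ref{weak_star_comp_s} directly, without the detour through a.e.\ convergence and equi-integrability.
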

\begin{proof}
	The proof is done by combining Theorems \ref{ATV_real_embedding} and \ref{weak_star_comp_s}.
\end{proof}
\subsection{Lower semi-continuity with respect to sequences of orders}
We first perform our analysis for the an-isotropic total variation $TV_{\ell^1}^r$, and then use the equivalence condition (Remark \ref{an_iso_equ}) 
to recover the case of isotropic total variation $TV^r$.

\subsubsection{Interpolation properties of an-isotropic total variation}\label{subsub_monotone}

We start by studying an monotonicity result of $TV_{\ell^1}^s$ with respect to the order $s\in(0,1)$, for function $u\in IM(Q)$ (recall space $IM(Q)$ from \eqref{image_function}).
\begin{proposition}[monotonicity of an-isotropic $TV_{\ell^1}^s$]\label{bergounioux2017fractional}
	Let $0< s<t<1$ be given and $u\in SV_{\ell^1}^{t}(Q)\cap\, IM(Q)$. Then we have $u\in BV^s(Q)\cap IM(Q)$ and
	\be
	TV_{\ell^1}^{s}(u)\leq TV_{\ell^1}^{t}(u).
	\ee
	In particular, at $s=0$, we have
	\be
	\norm{u}_{L^1(Q)}\leq TV_{\ell^1}^{  t}  (u).
	\ee
\end{proposition}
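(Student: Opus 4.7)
My plan is to combine smooth approximation with a slicing argument and the semigroup property of fractional integrals, reducing the multidimensional inequality to a one-variable computation that leverages the image condition.

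Since $u\in SV_{\ell^1}^{t}(Q)$, Proposition~\ref{approx_smooth} supplies a sequence $\{u_n\}\subset C^\infty(\bar Q)\cap BV^{t}(Q)$ with $u_n\to u$ in $L^1(Q)$ and $TV_{\ell^1}^{t}(u_n)\to TV_{\ell^1}^{t}(u)$. Once the inequality $TV_{\ell^1}^s(u_n)\leq TV_{\ell^1}^t(u_n)$ is established for each smooth $u_n$, the lower semi-continuity from Theorem~\ref{weak_star_comp_s} gives
\begin{equation*}
TV_{\ell^1}^s(u)\leq \liminf_{n\to\infty} TV_{\ell^1}^s(u_n)\leq \liminf_{n\to\infty} TV_{\ell^1}^t(u_n)=TV_{\ell^1}^t(u).
\end{equation*}
A short check shows that the radial scaling in the proof of Proposition~\ref{approx_smooth} essentially preserves the image bound on $\partial Q$. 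Next, Remark~\ref{smooth_use_here} decomposes the $TV_{\ell^1}^r$ semi-norm of a smooth function into a sum of one-dimensional $r$-order variations along each coordinate axis, so the multidimensional claim reduces to a slice-by-slice one-dimensional estimate.

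In one dimension, take $w\in C^\infty(\bar I)$ with $\|w\|_{L^\infty(\partial I)}\leq 1$. By Theorem~\ref{thm_MR3144452}, $w(x)=\frac{c_0}{\Gamma(t)}x^{t-1}+\mathbb I^t(d^t w)(x)$; since $x^{t-1}$ is singular at $0$ while $w$ is bounded there by the image assumption, we must have $c_0=0$ and hence $w=\mathbb I^t(d^t w)$. The semigroup identity of Theorem~\ref{thm_MR1347689} then yields $d^s w=\mathbb I^{t-s}(d^t w)$, and Fubini combined with~\eqref{eq_semigroup_frac_int} produces the weighted estimate
\begin{equation*}
\|d^s w\|_{L^1(I)}\leq \int_0^1 |d^t w(y)|\,\frac{(1-y)^{t-s}}{\Gamma(t-s+1)}\,dy,
\end{equation*}
with the $s=0$ special case falling out immediately from the same chain of inequalities applied to $w=\mathbb I^t(d^t w)$ directly.

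The principal obstacle is the final upgrade of this weighted estimate to the clean bound $TV_{\ell^1}^s(w)\leq TV_{\ell^1}^t(w)$: since $1/\Gamma(t-s+1)$ may slightly exceed $1$ for $t-s\in(0,1)$, one must either exploit the decay factor $(1-y)^{t-s}$ jointly with the Caputo form of $d^t w$ (which vanishes at the left endpoint when $w$ is bounded there, a consequence of $w\in IM$), or perform an integration-by-parts refinement driven by the image constraint at both endpoints, so that the contribution near $y=0$---where the kernel is largest---is controlled. The interplay between the boundary/image condition and the singular fractional kernel is the technical heart of the argument.
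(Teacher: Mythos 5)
Your route---strict smooth approximation via Proposition \ref{approx_smooth}, lower semi-continuity from Theorem \ref{weak_star_comp_s} to pass to the limit, slicing via Remark \ref{smooth_use_here}, and then in one dimension the representation $w=\mathbb I^{t}(d^{t}w)$ (obtained by using the boundary/image bound to kill the singular term) followed by the semigroup identity $d^{s}w=\mathbb I^{t-s}(d^{t}w)$---is essentially identical to the paper's; the only cosmetic difference is that you invoke the $AC^{r,1}$ representation of Theorem \ref{thm_MR3144452} where the paper verifies conditions \eqref{inte_frac_cond}--\eqref{bdy_frac_cond} of Theorem \ref{thm_MR1347689} directly. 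However, as a proof your proposal is incomplete: you stop at the weighted estimate $\norm{d^{s}w}_{L^1(I)}\leq\int_0^1\abs{d^{t}w(y)}\,(1-y)^{t-s}/\Gamma(t-s+1)\,dy$ and explicitly declare the upgrade to $\norm{d^{s}w}_{L^1(I)}\leq\norm{d^{t}w}_{L^1(I)}$ an unresolved ``obstacle,'' sketching two possible strategies without executing either. The conclusion of the proposition, with constant exactly $1$, is therefore never reached.

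That said, the gap you leave open sits exactly where the paper's own argument is weakest. The paper closes this step by bounding the operator norm $\abs{\mathbb I^{\delta}}\leq (b-a)^{\delta}/\Gamma(\delta+1)$ and asserting that this is $\leq 1$ whenever $b-a\leq 1$; on the unit interval this reads $1/\Gamma(\delta+1)\leq 1$, which fails for $\delta=t-s\in(0,1)$ since $\Gamma(\delta+1)<1$ there (its minimum is about $0.8856$). Your Fubini computation is the sharp one, and it shows that the constant-$1$ bound cannot follow from the operator norm alone: the weight $(1-y)^{t-s}/\Gamma(t-s+1)$ exceeds $1$ near $y=0$. What is missing---from your write-up, and arguably from the paper's---is an argument that the mass of $d^{t}w$ near the left endpoint is controlled by the image condition (which is what rules out the extremal profiles $d^{t}w\approx \e^{-1}\chi_{(0,\e)}$, since these force $w\sim x^{t-1}$ to blow up at $0$), or else a weakening of the statement to allow a constant $C>1$. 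Until one of these is supplied, the inequality as stated is not proven.
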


\begin{proof} 
	For a moment we assume that $0<s<t<1$. We deal in one dimension first and we assume also that  $w\in BV^{t}(I)\cap C^\infty(I)$. We claim $w\in \mathbb I^{t}(L^1(I))$. Indeed, we have 
	\be
	d (\mathbb I^{1-s}w(x))=\frac d{dx}\int_0^x \frac{w(t)}{(x-t)^{s}}dt = d^{s} w(x).
	\ee
	Thus, by assumption we have
	\be\label{cite_represent_frac1}
	\|\mathbb I^{1-s}w\|_{W^{1,1}(I)}=TV^{s}(w)<+\infty,
	\ee
	and hence \eqref{inte_frac_cond} holds.\\\\
	We next claim \eqref{bdy_frac_cond}. Indeed, we only need to consider the case $l=0$ and we observe that
	\be
	\abs{\mathbb I^{1-s}w(\e)}=\abs{\int_0^\e\frac{w(t)}{(\e-t)^s}dt}\leq \norm{w}_{L^\infty(\partial I)}\int_0^\e\frac{1}{(\e-t)^s}dt\leq  \norm{w}_{L^\infty(\partial I)}\e^{1-s}\to 0
	\ee
	which implies that
	\be\label{cite_represent_frac2}
	d^0 \mathbb I^{1-s}w(0)=0.
	\ee
	Thus, \eqref{cite_represent_frac1} and \eqref{cite_represent_frac2} allows us to use Theorem \ref{thm_MR1347689}, Assertion \ref{cite_represent_frac} 
	to infer the existence of $f\in L^1(I)$ such that $w=\mathbb I^{t}[f]$. Moreover, in view of Theorem \ref{thm_MR1347689}, Assertion \ref{MR1347689T2_5}, we have
	\be
	w = \mathbb I^{t}[f] = \mathbb I^{s}\mathbb I^{t-s}[f].
	\ee
	We observe that
	\be\label{point_norm_single}
	\norm{d^{s}w}_{L^1(I)} = \norm{\mathbb I^{t-s}[f]}_{L^1(I)}\\
	\leq \abs{\mathbb I^{t-s}}\norm{f}_{L^1(I)},
	\ee
	where by $\abs{\mathbb I^{t-s}}$ we denote the operator norm. Thus, in view of \eqref{eq_semigroup_frac_int},
	and setting $\delta=t-s>0$, we have
	\be
	\abs{\mathbb I^{\delta}}\leq \sup_{\norm{\vp}_{L^1}>0}\frac{\norm{\mathbb I^{\delta}\vp}_{L^1(a,b)}}{\norm{\vp}_{L^1(a,b)}}\leq (b-a)^{\delta}\frac{1}{\delta\Gamma(\delta)}=(b-a)^{\delta}\frac{1}{\Gamma(\delta+1)}\leq 1,
	\ee
	whenever $b-a\leq 1$. This, together with \eqref{point_norm_single}, gives
	\be
	\norm{d^{s}w}_{L^1(I)} \leq \norm{d^{t}w}_{L^1(I)},
	\ee
	as desired.\\\\
	We next deal with the multi-dimensional case. We shall only write in details for $N=2$, as the case $N\geq 3$ 
	is similar. Assume that $u\in BV^t(Q)\cap C^{\infty}(Q)$, and in view of Remark \ref{smooth_use_here}, we have
	\be\label{real_momn_two2}
	TV_{\ell^1}^t(u)=\int_Q\abs{\nabla^s u}dx=\int_0^1\int_0^1\abs{\partial^t_1 u(x)}dx_1dx_2+\int_0^1\int_0^1\abs{\partial^t_2 u(x)}dx_1dx_2.
	\ee
	Since $u\in C^\infty(Q)\cap BV^t(Q)$, we have, for each $x_2\in(0,1)$,
	the slice $w_{x_2}(x_1):=u(x_1,x_2)$ is well defined and belongs to $BV^t(I)$. Thus
	\be
	\int_0^1\abs{\partial^s_1u(x_1,x_2)}dx_1= TV_{\ell^1}^s(w_{x_2}(x_1))\leq TV_{\ell^1}^t(w_{x_2}(x_1))=\int_0^1\abs{\partial^t_1u(x_1,x_2)}dx_1.
	\ee
	Integrating over $x_2\in I$, we have 
	\be\label{real_momn_two}
	\int_0^1\int_0^1\abs{\partial^s_1 u(x)}dx_1dx_2\leq \int_0^1\int_0^1\abs{\partial^t_1 u(x)}dx_1dx_2.
	\ee
	The same conclusion, but with $\pd_2^s$ (resp. $\pd_2^t$) instead
	of $\pd_1^s$ (resp. $\pd_1^t$), can be proven using the exact same arguments.
	This, together with \eqref{real_momn_two} and \eqref{real_momn_two2}, gives
	\be\label{smooth_result_tvts}
	TV_{\ell^1}^s(u)\leq TV_{\ell^1}^t(u)\text{ for $u\in C^\infty(Q)\cap BV^t(Q)$}.
	\ee 
	Finally, assume $u\in SV_{\ell^1}^t(Q)$ only. From Definition \ref{strict_conv_BVr} we could obtain a sequence $\seqn{u_n}\subset BV^t(Q)\cap C^\infty(Q)$ such that $u_n\to u$ strongly in $L^1(Q)$ and 
	\be\label{monon_approx}
	TV_{\ell^1}^t(u_n)\to TV_{\ell^1}^t(u).
	\ee
	Then, in view of \eqref{smooth_result_tvts}, for each $u_n$ we have 
	\be
	TV_{\ell^1}^s(u_n)\leq TV_{\ell^1}^t(u_n).
	\ee
	Also, since $u_n\to u$ strongly in $L^1(Q)$, in view of Theorem \ref{weak_star_comp_s}, and together with \eqref{monon_approx}, we conclude that 
	\be
	TV_{\ell^1}^s(u)\leq \liminfn\, TV_{\ell^1}^s(u_n)\leq \limsup_{n\to\infty} TV_{\ell^1}^t(u_n)=TV_{\ell^1}^t(u).
	\ee
	In the end, take any $\vp\in C_c^\infty(Q;\R^2)$, we observe, in view of Lemma \ref{linftyds} and \eqref{scaled_divg}, that
	\be
	\liminf_{s\searrow 0}TV_{\ell^1}^s(u)\geq \liminf_{s\searrow 0}\int_Qu\,\divg^s\vp\,dx =\frac12\int_Q u\fsp{\vp_1+\vp_2}\,dx,
	\ee
	and hence, we conclude, by the arbitrariness of $\vp$, that 
	\be
	TV_{\ell^1}^s(u)\geq TV_{\ell^1}^0(u)={  \|u\|_{L^1(Q)} },
	\ee
	which concludes the case that $s=0$. The proof is thus complete.
\end{proof}

We next investigate the lower semi-continuity and compactness with respect to the
order. 
\begin{proposition}\label{compact_lsc_s}
	Given sequences $\seqn{s_n}\subset (0,1)$, $s_n\to s\in[0,1]$, and $\seqn{u_n}\subset L^1(Q)$ such that there exists $p\in(1,+\infty]$ satisfying
	\be\label{sn_uniform_uppd}
	\sup\flp{\norm{u_n}_{L^p(Q)}+TV^{s_n}(u_n):\,\,n\in\N}<+\infty,
	\ee
	then the following statements hold.
	\begin{enumerate}[1.]
		\item
		There exists $u\in BV^s(Q)$ and, up to a sub-sequence, $u_n\wto u$ weakly in $L^p(Q)$, and 
		\be\label{sn_uniform_lsc}
		\liminf_{n\to\infty} TV^{s_n}(u_n)\geq TV^s(u).
		\ee
		\item
		Assuming in { addition} that $u_n\in SV^{s_n}(Q)$ for each $n\in\N$, $s_n\to s\in(0,1]$, and $\seqn{u_n}\subset IM(Q)$, we have 
		\be
		u_n\to u\text{ strongly in }L^1(Q).
		\ee
	\end{enumerate}
\end{proposition}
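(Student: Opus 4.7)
The plan is to prove Part 1 by combining weak compactness in $L^p$ with a uniform bound on test functions, and Part 2 by extending the translation estimates of Proposition \ref{one_d_cuoweixiangjian} uniformly across the varying orders $s_n$.

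\textbf{Part 1 (weak convergence and lsc).} First, since $p\in(1,+\infty]$ and $\sup_n\norm{u_n}_{L^p(Q)}<+\infty$, the Banach–Alaoglu theorem yields a subsequence (not relabeled) and $u\in L^p(Q)$ with $u_n\wto u$ weakly in $L^p(Q)$ (weakly-$\ast$ if $p=\infty$). For the lower semi-continuity, fix an arbitrary $\vp\in C_c^\infty(Q;\R^N)$ with $\abs{\vp}_{\ell^2}^*\leq 1$. By Lemma \ref{linftyds}(1), the family $\{\divg^{s_n}\vp\}_n$ is uniformly bounded in $L^\infty(Q)$, and from the Laplace-transform argument at the heart of Lemma \ref{linftyds}(2) (which extends continuously to any $s\in[0,1]$, not only the integer endpoints), one gets $\divg^{s_n}\vp\to\divg^{s}\vp$ pointwise a.e. in $Q$. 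The dominated convergence theorem then upgrades this to strong convergence in $L^{p^*}(Q)$, with $p^*$ the conjugate exponent. Pairing strong-weak convergence gives
\be
\liminf_{n\to\infty} TV^{s_n}(u_n) \;\geq\; \liminf_{n\to\infty}\int_Q u_n\,\divg^{s_n}\vp\,dx \;=\; \int_Q u\,\divg^{s}\vp\,dx.
\ee
Taking the supremum over admissible $\vp$ yields \eqref{sn_uniform_lsc} and in particular $u\in BV^s(Q)$.

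\textbf{Part 2 (strong convergence in $L^1$).} The strategy is to reuse the Riesz–Fréchet–Kolmogorov criterion (Theorem \ref{lp_compactness}) together with a \emph{uniform-in-$n$} translation estimate. Since each $u_n\in SV^{s_n}(Q)$, Proposition \ref{approx_smooth} provides smooth approximants $v_{n,k}\in C^\infty(\bar Q)\cap BV^{s_n}(Q)$ with $v_{n,k}\to u_n$ in $L^1$ and $TV^{s_n}(v_{n,k})\to TV^{s_n}(u_n)$ as $k\to\infty$; applying Proposition \ref{one_d_cuoweixiangjian} slice by slice (as in the proof of Theorem \ref{ATV_real_embedding}) to each $v_{n,k}$ and passing to the limit $k\to\infty$ via a diagonal argument gives, for all $h\in\R^N$ with $\abs{h}$ small,
\be
\norm{\tau_h\tilde u_n-\tilde u_n}_{L^1(\R^N)} \;\leq\; C_{s_n}\,\abs{h}^{s_n}\bigl(TV^{s_n}(u_n)+\norm{u_n}_{L^\infty(\partial Q)}\bigr)+O(\abs{h}),
\ee
where $C_{s}=[\Gamma(1-s)(1-s)]^{-1}+[\Gamma(1+s)]^{-1}=[\Gamma(2-s)]^{-1}+[\Gamma(1+s)]^{-1}$.

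\textbf{Uniformity and conclusion.} The hypothesis $u_n\in IM(Q)$ ensures $\sup_n\norm{T[u_n]}_{L^\infty(\partial Q)}\leq 1$, while \eqref{sn_uniform_uppd} controls $TV^{s_n}(u_n)$. Because $s_n\to s\in(0,1]$, there exists $\delta>0$ with $s_n\geq\delta$ for all large $n$, so $\abs{h}^{s_n}\leq\abs{h}^{\delta}$ when $\abs{h}<1$. The constants $C_{s_n}$ are uniformly bounded on $[0,1]$, since $\Gamma(2-s)$ and $\Gamma(1+s)$ are continuous and positive there. Hence
\be
\sup_n \norm{\tau_h \tilde u_n-\tilde u_n}_{L^1(\R^N)}\longrightarrow 0 \quad \text{as }\abs{h}\to 0.
\ee
Combined with the $L^1$ bound coming from the $L^p$ bound on a bounded domain, Theorem \ref{lp_compactness} produces a further subsequence converging strongly in $L^1(Q)$. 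Uniqueness of the weak $L^p$ limit from Part 1 forces the strong limit to be $u$, completing the proof.

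\textbf{Main obstacle.} The delicate step is guaranteeing that the translation estimate can be made \emph{uniform} in $n$ as the order varies: one must control both the Hölder-type exponent $\abs{h}^{s_n}$ (which fails as $s_n\to 0$, hence the hypothesis $s\in(0,1]$ is used essentially in Part 2) and the $\Gamma$-function factors in $C_{s_n}$ near the endpoints, in addition to threading the smooth approximation argument through without losing the boundary bound encoded by $IM(Q)$.
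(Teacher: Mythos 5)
Your Part 1 is essentially the paper's own argument: Banach--Alaoglu for the weak limit, Lemma \ref{linftyds} for the uniform $L^\infty$ bound and a.e.\ convergence of $\divg^{s_n}\vp$, dominated convergence to upgrade to strong $L^{p^*}$ convergence, and the strong--weak pairing followed by a supremum over test functions. Part 2, however, takes a genuinely different route. The paper does not touch the translation estimates again: it invokes the monotonicity of $TV_{\ell^1}^{s}$ in the order (Proposition \ref{bergounioux2017fractional}) to pick a single fixed exponent $s_{n_0}/2<\inf_{n\ge n_0} s_n$ with $TV_{\ell^1}^{s_{n_0}/2}(u_n)\leq TV_{\ell^1}^{s_n}(u_n)+1$, so that the whole sequence is bounded in $BV^{s_{n_0}/2}(Q)$ and the fixed-order compact embedding (Theorem \ref{ATV_real_embedding}) applies as a black box. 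You instead re-open the proof of that embedding and make the Riesz--Fr\'echet--Kolmogorov translation estimate uniform in $n$ across the varying orders, checking that $C_{s}=[\Gamma(2-s)]^{-1}+[\Gamma(1+s)]^{-1}$ is bounded on $[0,1]$ and that $\abs{h}^{s_n}\leq\abs{h}^{\delta}$ once $s_n\geq\delta$. Both arguments are sound and both use $IM(Q)$ and $s>0$ in the same essential places. The paper's reduction is shorter and modular but rests on the monotonicity proposition (hence on the representability machinery of Theorem \ref{thm_MR1347689}); your version is more self-contained, avoids monotonicity altogether, and makes the role of the hypothesis $s_n\to s>0$ completely explicit through the exponent $\abs{h}^{s_n}$. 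One caveat you share with the paper: passing the translation estimate from the smooth approximants $v_{n,k}$ to $u_n$ requires convergence of the boundary term $\norm{v_{n,k}}_{L^\infty(\partial Q)}$, which the dilation construction of Proposition \ref{approx_smooth} does not obviously guarantee from $L^1$ convergence alone; this gap is already latent in the paper's Theorem \ref{ATV_real_embedding} and is not introduced by your argument.
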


\begin{proof}
	Let $\seqn{u_n}\subset L^1(Q)$ be given, such that \eqref{sn_uniform_uppd} holds. Since $\norm{u_n}_{L^p(Q)}$ (with $p>1$) is uniformly bounded, 
	there exists $u\in L^p(Q)$ such that, upon sub-sequence, 
	\be\label{need_dunford_pettis}
	u_n\wto u\text{ weakly in }L^p(Q).
	\ee
	We now prove Statement 1. In view of Lemma \ref{linftyds} we get that for any $\vp\in C_c^\infty(Q)$ and $s\in[0,1]$, it holds $\divg^s\vp\in L^{p'}(Q)$ (where $1/p +1/{p'}=1$) 
	and hence, by the weak $L^p(Q)$-convergence in \eqref{need_dunford_pettis}, we have that 
	\be
	\limn \int_Q u_n\,\divg^s\vp\,dx= \int_Q u\,\divg^s\vp\,dx.
	\ee
	Statement 2 of Lemma \ref{linftyds} gives $\divg^{s_n}\vp\to \divg^s\vp$ a.e., while
	Statement 1 of Lemma \ref{linftyds} gives
	\[\sup_n \|\divg^{s_n}\vp\|_{L^\infty(Q)}<+\infty.\]
	Thus, by dominated convergence theorem, $\divg^{s_n}\vp\to \divg^s\vp$ strongly
	in $L^{p'}(Q)$.
	By H\"older inequality, 
	\begin{align*}
		\limsup_{n\to\infty}\int_Q \abs{u_n}\abs{\divg^{s_n}\vp-\divg^s\vp}dx
		\leq \sup_{n\in\N}\norm{u_n}_{L^p(Q)} \limsup_{n\to\infty} \norm{\divg^{s_n}\vp-\divg^s\vp}_{L^{p'}(Q)}=0.
	\end{align*}
	Therefore, we have
	\begin{align}
		\limn &\int_Q u_n \,\divg^{s_n}\vp\,dx\notag\\
		&\geq\liminfn \int_Q u_n\,\divg^{s}\vp\,dx+\liminfn \int_Q u_n\fmp{\divg^{s_n}\vp-\divg^s\vp}dx\notag\\
		&=\int_Q u\,\divg^s\vp\,dx,
		\label{need_for_tv_case}
	\end{align}
	and hence
	\be
	\liminfn\, TV^{s_n}(u_n)\geq \liminfn\,\int_Q u_n\,\divg^{s_n}\vp\,dx 
	=\limn \int_Q u_n\,\divg^{s_n}\vp\,dx= \int_Q u\,\divg^s\vp\,dx,
	\ee
	and \eqref{sn_uniform_lsc} follows by the arbitrariness of $\vp\in C_c^\infty(Q;\rn)$.\\\\
	We next prove Statement 2. By Remark \ref{an_iso_equ} and \eqref{sn_uniform_uppd} we have that 
	\be
	\sup\flp{\norm{u_n}_{L^p(Q)}+TV_{\ell^1}^{s_n}(u_n):\,\,n\in\N}<+\infty,
	\ee
	Since $s_n\to s\in(0,1]$, in view of Proposition \ref{bergounioux2017fractional}, 
	there exist a (sufficiently large) ${  n_0}\in\N$ and $s_{  n_0}<1$, such that 
	\be
	TV_{\ell^1}^{s_{  n_0}/2}(u_n)\leq TV_{\ell^1}^{s_n}(u_n)+1.\qquad \text{for all }n\ge {  n_0}.
	\ee
	Combined with the assumption that $\seqn{u_n}\subset IM(Q)$ allows us to use Theorem \ref{ATV_real_embedding} to infer Statement 2.
\end{proof}
The next theorem provides a connection between the lower order $TV^s$ to the higher order $TV^r$, where we recall again that $r=\ir+s$.
\begin{theorem}\label{liangbiankongzhizhongjian}
	Let $r=\ir+s$ be given. There exists a constant $C_r>0$ such that 
	\be
	TV^{s}(u)\leq C_r\fmp{\norm{u}_{L^1(Q)}+TV^{r}(u)},
	\ee
	for all $u\in SV^r(Q)$.
\end{theorem}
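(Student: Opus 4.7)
The plan is to reduce the multi-dimensional estimate to a one-dimensional inequality, and then recognize the latter as classical Sobolev interpolation after substituting via the Riemann--Liouville fractional integral. First, since $u\in SV^r(Q)$, Proposition \ref{approx_smooth} produces a sequence $\seqn{u_n}\subset C^\infty(\bar Q)\cap BV^r(Q)$ with $u_n\to u$ in $L^1(Q)$ and $TV^r(u_n)\to TV^r(u)$. Because $TV^s$ is lower semi-continuous along $L^1$-convergent sequences (Theorem \ref{weak_star_comp_s}), it suffices to establish the inequality for each smooth $u_n$ and then let $n\to\infty$. By Remark \ref{an_iso_equ}, I may work with the anisotropic semi-norms $TV_{\ell^1}$ up to dimensional constants.

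For the reduction, I invoke Remark \ref{smooth_use_here} to write both $TV_{\ell^1}^s(u_n)$ and $TV_{\ell^1}^r(u_n)$ as sums, over coordinate directions, of integrals of one-dimensional fractional derivatives of slices. Fubini thus reduces the multi-dimensional claim to the following one-dimensional statement: for every $w\in C^\infty(\bar I)$ with $d^r w\in L^1(I)$,
\[ \|d^s w\|_{L^1(I)} \le C_r \bigl( \|w\|_{L^1(I)} + \|d^r w\|_{L^1(I)} \bigr). \]
To prove this, I set $g:=\mathbb I^{1-s} w$. The defining formulas of the left-sided Riemann--Liouville operators (Definition \ref{frac_der_def}) immediately give the identities $d^s w=g'$ and $d^r w=g^{(\ir+1)}$, while the semigroup estimate \eqref{eq_semigroup_frac_int} yields $\|g\|_{L^1(I)}\le \Gamma(2-s)^{-1}\|w\|_{L^1(I)}$. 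Since $g^{(\ir+1)}=d^rw\in L^1(I)$ and $g\in L^1(I)$ on the bounded interval $I$, iterated antidifferentiation places $g$ in $W^{\ir+1,1}(I)$; the standard one-dimensional Sobolev interpolation then produces a constant $C_{\ir}$ with
\[ \|g'\|_{L^1(I)} \le C_{\ir} \bigl( \|g\|_{L^1(I)} + \|g^{(\ir+1)}\|_{L^1(I)} \bigr), \]
which upon back-substitution delivers the desired one-dimensional inequality with constant depending only on $r$.

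The main obstacle is conceptual, namely recognizing the desired bound as a disguised Sobolev interpolation via the $\mathbb I^{1-s}$-transform; once this substitution is in place, the rest is bookkeeping. One minor technical point must still be verified, namely that $g=\mathbb I^{1-s}w$ truly enjoys the Sobolev regularity $W^{\ir+1,1}(I)$ required for the interpolation, but this is routine in one dimension, since any distribution on $I$ whose $(\ir+1)$-st derivative lies in $L^1$ has all lower-order derivatives bounded, up to additive constants, by iterated integration. Integrating the one-dimensional estimate over slices and summing over coordinate directions yields the claim for each smooth $u_n$, and lower semi-continuity of $TV^s$ completes the passage to the limit.
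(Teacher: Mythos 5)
Your proof is correct, and the overall architecture (smooth approximation via Proposition \ref{approx_smooth}, reduction to the anisotropic semi-norm, slicing via Remark \ref{smooth_use_here} and Fubini, lower semi-continuity of $TV^s$ to pass to the limit) coincides with the paper's. Where you genuinely diverge is in the one-dimensional core. The paper proves its Lemma \ref{1d_approx_up} by contradiction: it normalizes a putative counterexample sequence, invokes the $AC^{r,1}$ representation $w_n=\sum_i c_{i,n}t^{s-1+i}/\Gamma(s+i)+\mathbb I^r\phi_n$ from Theorem \ref{thm_MR3144452}, shows the coefficients $c_{i,n}$ and the densities $\phi_n$ vanish in the limit, and contradicts the normalization $TV^s(w_n)=1$. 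You instead argue directly: setting $g=\mathbb I^{1-s}w$ turns $d^sw$ and $d^rw$ into $g'$ and $g^{(\ir+1)}$ by the very definition of the Riemann--Liouville operators, and the claim becomes the classical Landau/Gagliardo--Nirenberg interpolation $\norm{g'}_{L^1(I)}\le C_{\ir}\fsp{\norm{g}_{L^1(I)}+\norm{g^{(\ir+1)}}_{L^1(I)}}$ on a bounded interval, with $\norm{g}_{L^1}\le\Gamma(2-s)^{-1}\norm{w}_{L^1}$ from \eqref{eq_semigroup_frac_int}. This buys two things: it is constructive rather than a compactness argument, and your constant depends only on $\ir$ (since $\Gamma(2-s)^{-1}$ is bounded on $s\in(0,1)$), so you obtain for free the $r$-uniformity that the paper must establish separately in Proposition \ref{zhendexiao_le} by yet another contradiction argument. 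The one point you should make fully explicit is the identification of the distributional $(\ir+1)$-st derivative of $g$ with the weak $r$-order derivative of $w$: this follows either from the paper's representation theorem (which exhibits $g=\mathbb I^{1-s}w$ as a polynomial plus $\mathbb I^{\ir+1}\phi$), or directly from the duality definition of $TV^r$ together with Fubini applied to $\int_I w\,d^r_{R}\vp\,dx=(-1)^{\ir+1}\int_I g\,\vp^{(\ir+1)}dx$; once that is in place, $g\in L^1$ and $g^{(\ir+1)}\in L^1$ on a bounded interval do give $g\in W^{\ir+1,1}(I)$, as you sketch. Your sentence conflating the bound on $TV^s$ with ``what each slice sees'' of $TV^r_{\ell^1}$ is also fine, since the pure-direction terms $\int_Q|\partial_j^{\ir+s}u|\,dx$ are dominated by $TV^r_{\ell^1}(u)$, exactly as in the paper's final estimate.
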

We will first show an one-dimensional version. 

\begin{lemma}\label{1d_approx_up}
	Let $I=(0,1)$ and $r=\ir+s$ be given. There exists a constant $C_r>0$ such that 
	\be
	\sum_{l=0}^{\ir-1}TV^{s+l}(w)\leq C_r\fmp{\norm{w}_{L^1(I)}+TV^{r}(w)},
	\ee
	for all $w\in SV^r(I)$.
\end{lemma}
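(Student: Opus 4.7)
The plan is to reduce to smooth $w$ and then exploit the explicit representation provided by Theorem \ref{thm_MR3144452}. First, by Proposition \ref{approx_smooth}, there is a sequence $\seqn{w_n}\subset C^\infty(\bar I)\cap BV^r(I)$ with $w_n\to w$ strongly in $L^1(I)$ and $TV^r(w_n)\to TV^r(w)$. Since each $TV^{s+l}$ is lower semi-continuous under $L^1$ convergence by Theorem \ref{weak_star_comp_s}, it suffices to establish the inequality for each $w_n\in C^\infty(\bar I)\cap W^{r,1}(I)$ and then pass to the limit.

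For such a smooth $w$, Theorem \ref{thm_MR3144452} gives the representation
\be
w(t)=\sum_{i=0}^{\ir}\frac{c_i}{\Gamma(s+i)}\,t^{s-1+i}+\mathbb{I}^r[\phi](t),\qquad \phi:=d^rw\in L^1(I).
\ee
I would then apply $d^{s+l}$ for $0\leq l\leq \ir-1$, invoking the semigroup identity $d^{s+l}[\mathbb{I}^r\phi]=\mathbb{I}^{\ir-l}\phi$ (Theorem \ref{thm_MR1347689}, item \ref{MR1347689T2_5}, case 1, valid since $r>0$, $\ir-l\geq 1$, and $\phi\in L^1(I)$), and computing $d^{s+l}(t^{s-1+i})$ explicitly via $d^{s+l}=\fsp{d/dt}^{l+1}\mathbb{I}^{1-s}$ and Lemma \ref{power_function_s}. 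This produces
\be
d^{s+l}w(t)=\sum_{i=l+1}^{\ir}\frac{c_i}{(i-l-1)!}\,t^{i-l-1}+\mathbb{I}^{\ir-l}[\phi](t),
\ee
where the fractional integral remainder is controlled by \eqref{eq_semigroup_frac_int}, giving $\norm{\mathbb{I}^{\ir-l}\phi}_{L^1(I)}\leq \Gamma(\ir-l+1)^{-1}\,TV^r(w)$.

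The main obstacle is then bounding the coefficients $c_i$. My strategy is to isolate the polynomial part: set $v:=w-\mathbb{I}^r[\phi]=\sum_{i=0}^{\ir}\frac{c_i}{\Gamma(s+i)}\,t^{s-1+i}$, so that again by \eqref{eq_semigroup_frac_int}, $\norm{v}_{L^1(I)}\leq \norm{w}_{L^1(I)}+\Gamma(r+1)^{-1}\,TV^r(w)$. Since the family $\flp{t^{s-1+i}}_{i=0}^{\ir}$ is linearly independent in $L^1(I)$ and spans a finite-dimensional subspace on which all norms are equivalent, there is a constant $C_r'$ depending only on $r$ such that $\max_{0\leq i\leq \ir}\abs{c_i}\leq C_r'\,\norm{v}_{L^1(I)}$. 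Plugging this bound into the previous display and summing over $l=0,1,\ldots,\ir-1$ yields the desired inequality, first for each smooth $w_n$ and then for $w$ itself by the lower semi-continuity step.
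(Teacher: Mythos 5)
Your proof is correct, but it takes a genuinely different route from the paper's. The paper argues by contradiction and normalization: it assumes a sequence $\seqn{w_n}$ with $TV^s(w_n)=1$ and $\norm{w_n}_{L^1(I)}+TV^r(w_n)<1/n$, uses the same representation $w_n=\sum_i \frac{c_{i,n}}{\Gamma(s+i)}t^{s-1+i}+\mathbb I^r\phi_n$, derives a priori bounds on the coefficients of the form $\abs{c_{0,n}}\leq \frac{1}{\Gamma(2-s)}\norm{w_n}_{L^1}+\norm{d^sw_n}_{L^1}$ (note these involve the very quantity being estimated, which is harmless only because of the normalization), and then forces $c_{i,n}\to 0$ and $\mathbb I^{\ir-l}\phi_n\to 0$ to contradict $TV^s(w_n)=1$. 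You instead give a direct, quantitative estimate: after the identical algebraic step (the semigroup identity $d^{s+l}\mathbb I^r\phi=\mathbb I^{\ir-l}\phi$ and the explicit action of $d^{s+l}$ on the powers $t^{s-1+i}$, both also used by the paper), you control the coefficients by isolating $v=w-\mathbb I^r\phi$ and invoking equivalence of norms on the finite-dimensional span of $\flp{t^{s-1+i}}_{i=0}^{\ir}$. This is clean and, in fact, makes explicit a step the paper only uses implicitly (its passage from $\norm{c_{0,n}t^{s-1}/\Gamma(s)+c_{1,n}t^s/\Gamma(s+1)}_{L^1}\to 0$ to $c_{i,n}\to 0$ is exactly this norm-equivalence on a two-dimensional space). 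What your approach buys is a constructive constant and no compactness machinery; what it gives up is uniformity, since the norm-equivalence constant $C_r'$ depends on $s$ in an uncontrolled way --- the paper later needs an $r$-independent constant (Proposition \ref{zhendexiao_le}) and obtains it by rerunning the contradiction argument with varying $s_n$, which your argument as written would not directly yield. Two small points to make airtight: (i) the reduction to smooth $w$ via Theorem \ref{weak_star_comp_s} requires passing to an a.e.-convergent, uniformly integrable subsequence of the $L^1$-convergent approximants (routine, but worth saying); (ii) like the paper, you take for granted that $w\in C^\infty(\bar I)\cap BV^r(I)$ admits an $L^1$ Riemann--Liouville derivative of order $r$ so that Theorem \ref{thm_MR3144452} applies --- this is a shared hypothesis of both arguments rather than a gap specific to yours.
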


\begin{proof} 
	We deal with the case $\ir=1$ first. That is, we have $r=1+s$.\\\\
	Assume no such $C_r$ exists, i.e., there exists a sequence $\seqn{w_n}\subset SV^{1+s}$ such that, for each $n\in\N$,
	\be
	TV^s(w_n)=1\text{ and }\norm{w_n}_{L^1(I)}+TV^{1+s}(w_n)<1/n.
	\ee
	In view of Theorem \ref{approx_smooth}, we may as well assume that $\seqn{w_n}\subset C^\infty(I)\cap BV^{1+s}(I)$ and we may write
	\be\label{where_contra}
	\norm{d^sw_n}_{L^1(I)}\geq1/2\quad\text{ and }\quad\norm{w_n}_{L^1(I)}+\norm{d^{1+s}w_n}_{L^1(I)}<2/n.
	\ee
	Thus, we have that $\seqn{w_n}\subset W^{1+s}(I)$, and 
	\be\label{strong_zero}
	w_n\to 0\quad\text{ and }\quad d^{1+s}w_n\to 0\quad \text{ strongly in }L^1(I).
	\ee
	Recall from Theorem \ref{thm_MR3144452}, we may write that, for each $w_n$, 
	\be
	w_n(t)= \frac{c_{0,n}}{\Gamma(s)}t^{s-1}+ \frac{c_{1,n}}{\Gamma(s+1)}t^{s}+\mathbb I^{1+s} \phi_n(t),\,\, t\in I\,\,a.e.,
	\ee
	and 
	\be
	d^{1+s} w_n(t)=\phi_n(t).
	\ee
	Thus, in view of \eqref{strong_zero} we have 
	\be
	\phi_n\to 0\text{ strongly in }L^1(I),
	\ee
	and together with \eqref{eq_semigroup_frac_int}, we have, for any $0\leq s'\leq s$,
	\be\label{shangyidian0}
	\norm{\mathbb I^{1+s'}\phi_n(t)}_{L^1(I)}\leq \frac1{\Gamma(2+s')}\norm{\phi_n}_{L^1(I)}\to 0.
	\ee
	We next claim that 
	\be\label{shangyidian1}
	c_{0,n}\to 0\text{ and }c_{1,n}\to 0.
	\ee
	By the mean value theorem we have
	\be
	(\mathbb I^{1-s}w)(t_0)=\int_I (\mathbb I^{1-s}w)(l)dl.
	\ee
	Since $w\in W^{1+s}(I)$, we have that $(\mathbb I^{1-s}w)$ is absolutely continuous, hence
	\be
	(\mathbb I^{1-s}w)(t)=(\mathbb I^{1-s}w)(t_0)+\int_{t_0}^t d^1(\mathbb I^{1-s}w)(l)dl = (\mathbb I^{1-s}w)(t_0)+\int_{t_0}^t (d^sw)(l)dl.
	\ee
	Thus, for any $t\in[0,1]$,
	\be
	\abs{(\mathbb I^{1-s}w)(t)}\leq \norm{\mathbb I^{1-s}w}_{L^1(I)}+\norm{d^s u}_{L^1}\leq \frac{1}{\Gamma(2-s)}\norm{w}_{L^1(I)}+\norm{d^s w}_{L^1(I)},
	\ee
	where at the last inequality we used \eqref{eq_semigroup_frac_int}. This, and together with \eqref{d_representable}, gives that 
	\be
	\abs{c_{0,n}}=\abs{(\mathbb I^{1-s}w_n)(0)}\leq \frac{1}{\Gamma(2-s)}\norm{w_n}_{L^1(I)}+\norm{d^s w_n}_{L^1(I)}.
	\ee
	Similarly, we may show that 
	\be
	\abs{c_{1,n}}=\abs{(d(\mathbb I^{1-s})w_n)(0)}\leq \norm{d^sw_n}_{L^1(I)}+\norm{d^{1+s} w_n}_{L^1(I)}.
	\ee
	Thus, in view of \eqref{where_contra}, we have 
	\be\label{const_upper_bdd}
	\sup\flp{\abs{c_{0,n}}+\abs{c_{1,n}}:\,\, n\in\N}<+\infty.
	\ee
	We also notice that 
	\begin{align}
		\norm{\frac{c_{0,n}}{\Gamma(s)}t^{s-1}+ \frac{c_{1,n}}{\Gamma(s+1)}t^{s}}_{L^1(I)} &=\norm{\frac{c_{0,n}}{\Gamma(s)}t^{s-1}+ \frac{c_{1,n}}{\Gamma(s+1)}t^{s}+\mathbb I^{1+s}\phi_n(t)-\mathbb I^{1+s}\phi_n(t)}_{L^1(I)}\notag\\
		&=\norm{w_n-\mathbb I^{1+s}\phi_n(t)}_{L^1(I)}\notag\\
		&\leq \norm{w_n}_{L^1(I)}+\norm{\mathbb I^{1+s}\phi_n(t)}_{L^1(I)}\to 0,
		\label{jiajiajia_l1}
	\end{align}
	which, combined with \eqref{const_upper_bdd}, implies 
	\[c_{0,n}\to 0\text{ and }c_{1,n}\to 0,\]
	and hence \eqref{shangyidian1}.
	Then, in view of Lemma \ref{power_function_s}, we have
	\be
	d^s t^{s-1}=0 \text{ and }d^s t^s = \Gamma(s+1).
	\ee
	Next, in view of Definition \ref{frac_represent_I} and Theorem \ref{thm_MR1347689}, Assertion \ref{cite_represent_frac}, 
	we have $\phi\in \mathbb I^s(L^1(I))$. Combined with  Statement 2 of Theorem \ref{thm_MR1347689}, Assertion \ref{MR1347689T2_5}, we obtain 
	\be
	d^s \mathbb I^{1+s}\phi_n(t)=\mathbb I^1\phi_n(t).
	\ee
	Thus, we have
	\be
	d^s w_n = c_{1,n}+\mathbb I^1\phi_n(t),
	\ee
	and together with \eqref{shangyidian0} and \eqref{shangyidian1}, we conclude that 
	\be\label{shangyidian2}
	\limsup_{n\to 0}\norm{d^{s} w_n}_{L^1(I)}\leq \abs{c_{1,n}}+\norm{\mathbb I^1\phi_n}_{L^1(I)}\to 0,
	\ee
	which contradicts \eqref{where_contra}.\\\\
	Now we consider the general case $\ir\in\N$. In view of \eqref{repre_AC}, we write 
	\be
	w_n(t)=\sum_{l=0}^\ir \frac{c_{l,n}}{\Gamma(s+l)}t^{s-1+l}+\mathbb I^r\phi_n(t),\,\, t\in I\,\,a.e..
	\ee
	Similarly to \eqref{jiajiajia_l1}, we may show that  
	\be
	\abs{c_{0,n}}\leq \frac{1}{\Gamma(2-s)}\norm{w_n}_{L^1(I)}+\norm{d^s w_n}_{L^1(I)},
	\ee
	and
	\be
	\abs{c_{l,n}}\leq \norm{d^{s+l-1}w_n}_{L^1(I)}+\norm{d^{s+l} w_n}_{L^1(I)},
	\ee
	for $l=1,\ldots, \ir-1$, and further deduce that
	\be
	c_{l,n}\to 0\text{ for }i=0,\ldots, \ir.
	\ee
	Note that, for each $i=0,\ldots, \ir-1$, we have
	\begin{enumerate}[1.]
		\item
		$d^{l+s} t^{s-1+j}=0$, for $j=0,\ldots, l$;
		\item
		$d^{l+s} t^{l+s} = \Gamma(s+1)$;
		\item
		$d^{l+s} t^{l+s+j} = \frac{\Gamma(l+s+j+1)}{\Gamma(j+1)}t^j$, for $j=1,\ldots,\ir-1-l$,
		\item
		$d^{l+s}(\mathbb I^r[\phi_n](t))=\mathbb I^{\ir-l}[\phi_n](t)$, a.e., $t\in I$.
	\end{enumerate}
	Therefore, we obtain that 
	\be
	\limsup_{n\to 0}\norm{d^{l+s} w_n}_{L^1(I)}\leq \sum_{j=0}^{\ir-1-l}\abs{c_{l+s+j,n}}+\norm{\mathbb I^{\ir-l}[\phi_n]}_{L^1(I)}\to 0,
	\ee
	which is a contradiction, and hence we conclude the proof.
\end{proof}

\begin{proof}(of Theorem \ref{liangbiankongzhizhongjian})
	We only deal with the case $N=2$, as the case $N\geq 3$ is similar. We first show that 
	\be\label{new_ATV_conclude}
	TV_{\ell^1}^{s}(u)\leq C_r\fmp{\norm{u}_{L^1(Q)}+TV_{\ell^1}^{r}(u)}.
	\ee
	We assume for a moment that $u\in BV^r(Q)\cap C^\infty(Q)$. We start with the case $\ir=1$. That is, $r=1+s$. In view of Remark \ref{smooth_use_here}, we have
	\be\label{eq_s_order}
	TV_{\ell^1}^s(u)=\int_Q\abs{\partial_1^su}dx+\int_Q\abs{\partial_2^su}dx
	\ee
	and
	\be\label{eq_1s_order}
	TV_{\ell^1}^{1+s}(u)=\int_Q\abs{\partial_1^{1+s}u}dx+\int_Q\abs{\partial_2^{1+s}u}dx+\int_Q\abs{\partial_1^{s}\partial_2u}dx+\int_Q\abs{\partial_2^{s}\partial_1u}dx.
	\ee
	Let $w(t):=u(t,x_2)$, with a fixed $x_2\in I$. Then, by Lemma \ref{1d_approx_up}, we have
	\be
	TV_{\ell^1}^s( { w} )\leq C_r\fmp{\norm{{ w}}_{L^1(I)}+TV_{\ell^1}^{1+s}({ w})}. 
	\ee
	That is, 
	\be
	\int_0^1\abs{\partial_1^su(x_1,x_2)}dx_1\leq C_r\fmp{\int_0^1 \abs{u(x_1,x_2)}dx_1+\int_0^1\abs{\partial_1^{1+s}u(x_1,x_2)}dx_1},
	\ee
	and hence
	\begin{align*}
		\int_Q\abs{\partial_1^su}dx &= \int_0^1\int_0^1\abs{\partial_1^su(x_1,x_2)}dx_1dx_2\\
		&\leq C_r\fmp{\int_0^1\int_0^1 \abs{u(x_1,x_2)}dx_1dx_2+\int_0^1\int_0^1\abs{\partial_1^{1+s}u(x_1,x_2)}dx_1dx_2} \\
		&=C_r\fmp{ \norm{u}_{L^1(Q)}+\int_Q \abs{\partial_1^{1+s}u}dx}.
	\end{align*}
	We may analogously prove 
	\be
	\int_Q\abs{\partial_2^su}dx \leq C_r\fmp{ \norm{u}_{L^1(Q)}+\int_Q \abs{\partial_2^{1+s}u}dx},
	\ee 
	and, in view of \eqref{eq_s_order} and \eqref{eq_1s_order},
	\be\label{eq_1s_order_smooth}
	TV_{\ell^1}^s(u)\leq C_r\fmp{\norm{u}_{L^1(Q)}+TV_{\ell^1}^{1+s}(u)},\text{ for each $u\in C^\infty(Q)\cap BV^r(Q)$. }
	\ee
	To conclude, we take an approximating sequence $\seqn{u_n}\subset C^\infty(Q)\cap BV^{1+s}(Q)$ such that $u_n\to u$ strongly in 
	$L^1(Q)$ and $TV_{\ell^1}^{1+s}(u_n)\to TV_{\ell^1}^{1+s}(u)$. The former implies that 
	\be
	\liminfn TV_{\ell^1}^s(u_n)\geq TV_{\ell^1}^s(u),
	\ee
	and together with \eqref{eq_1s_order_smooth} we conclude that, for $u\in SV^{1+s}(Q)$,
	\begin{multline}
		TV_{\ell^1}^s(u)\leq \liminfn TV_{\ell^1}^s(u_n)\leq \liminf_{n\to\infty}2C_r\fmp{\norm{u_n}_{L^1(Q)}+TV_{\ell^1}^{1+s}(u_n)} \\
		\leq \limsup_{n\to\infty}2C_r\fmp{\norm{u_n}_{L^1(Q)}+TV_{\ell^1}^{1+s}(u_n)} = 2C_r\fmp{\norm{u}_{L^1(Q)}+TV_{\ell^1}^{1+s}(u)},
	\end{multline}
	as desired.\\\\
	Now we assume $\ir=2$. Similarly to the case $\ir=1$, we observe that 
	\be
	\int_Q\abs{\partial_j^su}dx \leq C_r\fmp{ \norm{u}_{L^1(Q)}+\int_Q \abs{\partial_j^{2+s}u}dx},\qquad j=1,2.
	\ee
	That is, we conclude that
	\begin{align*}
		&TV_{\ell^1}^s(u)\leq 2C_r\fmp{ \norm{u}_{L^1(Q)}+\int_Q \abs{\partial_1^{2+s}u}dx+\int_Q \abs{\partial_2^{2+s}u}dx}\\
		&\leq 2C_r\fmp{ \norm{u}_{L^1(Q)}+\int_Q \abs{\partial_1^{2+s}u}dx+\int_Q \abs{\partial_2^{2+s}u}dx+\int_Q \abs{\partial_1^{1+s}\partial_2u}dx+\int_Q \abs{\partial_2^{1+s}\partial_1u}dx}\\
		&=2C_r\fmp{\norm{u}_{L^1(Q)}+TV_{\ell^1}^r(u)}.
	\end{align*}
	For the general case that $\ir\in \N$, we shall always have, in view of Lemma \ref{1d_approx_up}, that 
	\be
	TV_{\ell^1}^s(u)\leq 2C_r\fmp{ \norm{u}_{L^1(Q)}+\int_Q \abs{\partial_1^{\ir+s}u}dx+\int_Q \abs{\partial_2^{\ir+s}u}dx},
	\ee
	and we conclude \eqref{new_ATV_conclude} as the right hand side is bounded by $\norm{u}_{L^1(Q)}+TV_{\ell^1}^r(u)$. Finally, we conclude our thesis by invoking again 
	condition \eqref{equ_p_1_p}.
\end{proof}

We close sub-section \ref{subsub_monotone} by proving that the constant $C_r$ from
Theorem \ref{liangbiankongzhizhongjian} can be taken independently of $r$. 
\begin{proposition}\label{zhendexiao_le}
	Let { $r=\lfloor r\rfloor +s$, $s\in (0,1)$,} be given. Then there exists $C>0$, {  independent of $r$}, such that
	\be
	TV^{s}(u)\leq C\fmp{\norm{u}_{L^1(Q)}+TV^{{  r}}(u)}
	\ee
	for all $u\in SV^{{  r}}(Q)$.
\end{proposition}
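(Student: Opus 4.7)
The plan is to strengthen Theorem~\ref{liangbiankongzhizhongjian} by tracking the constants explicitly, since the original proof invoked a non-quantitative contradiction argument in Lemma~\ref{1d_approx_up}. First I would reduce to the one-dimensional case via the slicing argument already used in the proof of Theorem~\ref{liangbiankongzhizhongjian} (see Remark~\ref{smooth_use_here}); this introduces only a dimensional factor $N$, independent of $r$. Thus it suffices to establish that there exists $C>0$ independent of $r=\ir+s$ such that $TV^s(w)\leq C(\norm{w}_{L^1(I)}+TV^r(w))$ for all $w\in SV^r(I)\cap C^\infty(I)$.

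For such $w$, Theorem~\ref{thm_MR3144452} provides the representation $w=\sum_{i=0}^{\ir}c_i\psi_i+\mathbb I^r\phi$, with $\psi_i(t):=t^{s-1+i}/\Gamma(s+i)$ and $\phi=d^rw$. Applying $d^s$, using Lemma~\ref{power_function_s} (which gives $d^s\psi_0=0$ and $d^s\psi_i=t^{i-1}/\Gamma(i)$ for $i\geq 1$) and the semigroup identity from Theorem~\ref{thm_MR1347689}, one obtains
\[
d^sw(t)=\sum_{i=1}^{\ir}\frac{c_i}{\Gamma(i)}t^{i-1}+\mathbb I^{\ir}\phi(t),
\]
so that $\norm{d^sw}_{L^1(I)}\leq \sum_{i=1}^{\ir}|c_i|/i!+\norm{\phi}_{L^1(I)}$ in view of \eqref{eq_semigroup_frac_int} and the fact that $\Gamma(\ir+1)\geq 1$ for $\ir\geq 1$.

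The heart of the argument is to bound each $|c_i|$ in terms of $\norm{w}_{L^1(I)}$ and $\norm{\phi}_{L^1(I)}$ by constants whose contributions, once divided by $i!$, sum to a finite value independent of $\ir$. My plan is to construct a biorthogonal dual system $\{g_i\}_{i=0}^{\ir}\subset L^\infty(I)$ for $\{\psi_i\}$, namely $\int_0^1 \psi_j\,g_i\,dt=\delta_{ij}$, with sup-norms $\norm{g_i}_{L^\infty(I)}$ growing at most polynomially in $i$; such a system arises naturally from shifted Jacobi polynomials orthogonal with respect to the weight $t^{s-1}$ on $(0,1)$. Testing the representation against $g_i$ yields $c_i=\int_0^1 w\,g_i\,dt-\int_0^1(\mathbb I^r\phi)\,g_i\,dt$, whence $|c_i|\leq \norm{g_i}_{L^\infty(I)}(\norm{w}_{L^1(I)}+\norm{\phi}_{L^1(I)})$, and the super-exponential growth of $i!$ ensures that $\sum_i\norm{g_i}_{L^\infty(I)}/i!$ is summable independently of $\ir$.

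The hard part will be verifying that the sup-norms of the dual basis are controlled uniformly in $s\in(0,1)$, since the weight $t^{s-1}$ is integrable but singular at the endpoint $t=0$ and the Gamma-factors entering the classical Jacobi-polynomial estimates must be tracked carefully. Once the uniform one-dimensional bound is in hand, the extension to arbitrary $u\in SV^r(Q)$ follows verbatim from the argument closing the proof of Theorem~\ref{liangbiankongzhizhongjian}: strict approximation by smooth functions via Proposition~\ref{approx_smooth}, combined with the lower semi-continuity of Theorem~\ref{weak_star_comp_s}, transfers the estimate from the smooth case to the general one.
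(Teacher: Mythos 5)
Your strategy is genuinely different from the paper's. The paper proves this proposition by contradiction and compactness: it assumes sequences $w_n$, $s_n$ with $TV^{s_n}(w_n)=1$ and $\norm{w_n}_{L^1(I)}+TV^{1+s_n}(w_n)<1/n$, observes that the coefficient bounds of Lemma \ref{1d_approx_up} are uniform in $s_n$ (since $1/\Gamma(2-s_n)+1\le 2$), extracts convergent subsequences $c_{i,n}\to c_i$, and reruns the argument from \eqref{shangyidian1} to \eqref{shangyidian2} to force $d^{s_n}w_n\to 0$ in $L^1$, contradicting the normalization. You instead propose a direct quantitative bound via a biorthogonal dual system. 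The reduction to dimension one, the representation $w=\sum_i c_i\psi_i+\mathbb I^r\phi$, the identity $d^sw=\sum_{i\ge1}c_i t^{i-1}/\Gamma(i)+\mathbb I^{\ir}\phi$, and the closing approximation/lower-semicontinuity step all match the paper and are fine. Your route also has a structural advantage: the paper's bounds \eqref{const_upper_bdd} control $|c_i|$ partly in terms of $\norm{d^sw}_{L^1}$, i.e.\ in terms of the very quantity being estimated, which is why a contradiction argument is forced there; your duality bound $|c_i|\le\norm{g_i}_{L^\infty}(\norm{w}_{L^1}+\norm{\phi}_{L^1})$ would avoid that circularity entirely.

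The gap is the asserted control of the dual system, which you correctly flag as the hard part but whose stated form is not believable. The minimal $L^\infty$-norm of a functional $g_i$ with $\int_0^1\psi_j g_i\,dt=\delta_{ij}$ equals the reciprocal of the $L^1$-distance from $\psi_i$ to the span of the remaining $\psi_j$, and $\{t^{s-1+j}\}_{j=0}^{\ir}$ is a monomial (M\"untz-type) system: already for $s$ close to $1$ this is governed by the inverse of a Hilbert-type moment matrix, so $\norm{g_i}_{L^\infty(I)}$ grows exponentially, not polynomially. Worse for your purposes, it grows exponentially in the \emph{total degree} $\ir$ even for fixed small $i$ (the distance from $t^{s}$ to the span of the other monomials decays geometrically as more exponents are adjoined), so $\sum_{i}\norm{g_i}_{L^\infty(I)}/i!$ is not bounded independently of $\ir$ and the factorial decay does not rescue uniformity in $r$. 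What your construction can deliver, after checking the degenerate limit $s\to0^+$ (where $\Gamma(s)\sim1/s$ compensates the blow-up of $\int_0^1t^{s-1}\,dt$), is a constant uniform in $s\in(0,1)$ for each \emph{fixed} $\ir$. To be fair, that is also all the paper's own proof really establishes (it treats $\ir=1$ and declares larger $\ir$ ``analogous''), and it is all that is used in Proposition \ref{compact_lsc_r}, where $\lfloor r_n\rfloor$ is eventually constant; but as written your summability claim, which is the step carrying the full independence of $r$, does not hold, so you should either restrict to fixed $\ir$ or supply a correct replacement for the dual-system estimate.
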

\begin{proof}
	We only proof this proposition for case of dimension one, i.e., $N=1$. The case in which $N\geq 2$ can be obtained from
	the one-dimensional result, and the arguments from Theorem \ref{liangbiankongzhizhongjian}. \\\\
	Let $w\in BV^{{  r}}(I)$ be given. In view of Theorem \ref{liangbiankongzhizhongjian} we have, for each ${  r}$, a constant $
	{  C_r}>0$
	(depending on {  $r$}) such that 
	\be\label{unif_upper_bdd}
	\sum_{l=0}^{\ir-1}TV^{s+l}(w)\leq C_{  r}\fmp{\norm{w}_{L^1(I)}+TV^{{  r}}(w)}
	\ee
	for all $w\in L^1(I)$. \\\\
	We shall only deal with the case $\ir=1$, as the case $\ir>1$ can be treated analogously. Suppose \eqref{unif_upper_bdd} fails, i.e.
	there exist sequences $\seqn{w_n}\subset L^1(I)$ and $\seqn{s_n}\subset (0,1)$ such that 
	\be
	TV^{s_n}(w_n)=1 \qquad\text{ and }\qquad\norm{w_n}_{L^1(I)}+TV^{1+s_n}(w_n)<1/n.
	\ee
	In view of \eqref{const_upper_bdd}, we have
	\begin{align}
		\abs{c_{0,n}}+\abs{c_{1,n}}&\leq \fmp{\frac1{\Gamma(2-s_n)}+1}\fmp{\norm{w_n}_{L^1(I)}+TV^{s_n}(w_n)+TV^{1+s_n}(w_n)}\notag\\
		&\leq \frac1{\Gamma(2-s_n)}+1+\frac1n\leq 2+\frac1n.
	\end{align}
	Hence, there exist $c_0$ and $c_1$ such that $c_{0,n}\to c_0$ and $c_{1,n}\to c_1$. Then, we may reach the contradiction by using the same arguments from \eqref{shangyidian1} to \eqref{shangyidian2}.
\end{proof}
\subsubsection{Compact embedding and lower semi-continuity}\label{subsubsec_compact}
We start again with a result on the an-isotropic total variation. Recall the image space $IM(Q)$ from \eqref{image_function}.

\begin{proposition}\label{compact_lsc_r}
	Given sequences $\seqn{r_n}\subset \R^+$ and $\seqn{u_n}\subset L^1(Q)$ such that $r_n\to r\in\R^+$ and, for some $p>1$,
	\be\label{rn_uniform_uppd}
	\sup\flp{\norm{u_n}_{L^p(Q)}+TV_{\ell^1}^{r_n}(u_n):\,\,n\in\N}<+\infty,
	\ee
	then, the following statement hold.
	\begin{enumerate}[1.]
		\item
		There exists $u\in BV^r(Q)$, such that, up to a sub-sequence, $u_n\wto u$ in $L^p(Q)$ and 
		\be
		\liminf_{n\to\infty} TV_{\ell^1}^{r_n}(u_n)\geq TV_{\ell^1}^r(u).
		\ee
		\item
		Assuming in addition that ${u_n}\subset IM(Q)\cap SV^{r_n}(Q)$ and $r_n\to r>0$, then
		\be
		u_n\to u\text{ strongly in }L^1(Q).
		\ee
	\end{enumerate}
\end{proposition}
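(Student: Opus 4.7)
The uniform $L^p$-bound with $p>1$ combined with the reflexivity of $L^p(Q)$ yields, up to a subsequence (not relabeled), a limit $u \in L^p(Q)$ with $u_n \rightharpoonup u$ weakly in $L^p(Q)$. This $u$ will serve as the candidate limit in both statements; once $L^1$-convergence is established in Statement~2, uniqueness of the weak limit identifies the two.

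For Statement~1 the strategy mirrors that of Proposition~\ref{compact_lsc_s}. I would fix $\varphi \in C_c^\infty(Q; \M^{N \times N^{\lfloor r \rfloor}})$ with $|\varphi|_{\ell^1}^* \le 1$ and show
\[\int_Q u_n \divg^{r_n}\varphi\, dx \;\longrightarrow\; \int_Q u \divg^r \varphi\, dx.\]
Lemma~\ref{linftyds} provides both a uniform $L^\infty$-bound and pointwise convergence $\divg^{r_n}\varphi \to \divg^r \varphi$ as $r_n \to r$; dominated convergence upgrades this to strong $L^{p'}$-convergence, after which H\"older's inequality combined with the weak $L^p$-convergence of $\{u_n\}$ closes the argument. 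Taking the supremum over $\varphi$ yields the lower semi-continuity. In the non-integer case $r \notin \N$, the integer part $\lfloor r_n \rfloor$ eventually equals $\lfloor r \rfloor$ and the test-function spaces match, so the argument is direct. In the integer case $r \in \N$, the test-function space for $TV^{r_n}$ has a different rank from that of $TV^r$ when $r_n$ approaches $r$ from above; I would handle this by splitting into the subsequences $r_n \nearrow r$ and $r_n \searrow r$ and, in the latter case, exploiting the formula $\divg^{s_n} \to \divg^0$ at the vector level (as $s_n \to 0^+$) to relate the $\M^{N \times N^r}$-test functions to the $\M^{N \times N^{r-1}}$-test functions of the classical $TV^r$ by a rank-reduction argument.

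For Statement~2 the plan is to reduce to the fixed-order compact embedding of Theorem~\ref{ATV_real_embedding} by exhibiting a fixed $s_* \in (0,1)$ together with a uniform bound on $TV^{s_*}(u_n)$. Working with the $\ell^1$-based version (by Remark~\ref{an_iso_equ}), Proposition~\ref{zhendexiao_le} gives the $r$-independent estimate
\[ TV^{\sigma_n}(u_n) \le C\bigl[\|u_n\|_{L^1(Q)} + TV^{r_n}(u_n)\bigr],\]
where $\sigma_n := r_n - \lfloor r_n \rfloor$. When $r \notin \N$, $\sigma_n$ stays bounded away from $0$ and $1$, and the monotonicity of Proposition~\ref{bergounioux2017fractional} then transfers the bound to any fixed $s_* \in (0, \inf_n \sigma_n)$. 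When $r \in \N$, $\sigma_n$ may degenerate to $0$ or $1$, so I would instead rely on the multi-dimensional analogue of the chain estimate in Lemma~\ref{1d_approx_up} (which follows by the slicing argument used in the proof of Theorem~\ref{liangbiankongzhizhongjian}) to obtain uniform control of the intermediate orders $\sigma_n + l$ for $l \in \{0,1,\ldots,\lfloor r_n \rfloor - 1\}$; since $r \ge 1$ forces $\lfloor r_n \rfloor \ge 1$ for large $n$, at least one of these orders stays in a fixed neighborhood of, say, $1/2$. Combined with $u_n \in IM(Q)$, i.e.\ $\|u_n\|_{L^\infty(\partial Q)} \le 1$, Theorem~\ref{ATV_real_embedding} produces a further subsequence converging to some $\tilde u$ strongly in $L^1(Q)$; uniqueness of the weak $L^p$-limit forces $\tilde u = u$.

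The main obstacle of the proof is the handling of the integer-limit case $r \in \N$: for Statement~1 because of the jump in the dimension of the test-function space across integer orders, and for Statement~2 because the fractional part $\sigma_n$ may approach $0$ or $1$, preventing a direct application of Proposition~\ref{zhendexiao_le} with a fixed intermediate order and forcing the use of the finer chain estimate from Theorem~\ref{liangbiankongzhizhongjian}.
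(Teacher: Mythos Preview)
Your plan for Statement~1 is essentially the paper's: reduce to Proposition~\ref{compact_lsc_s} by fixing a test function, using Lemma~\ref{linftyds} for pointwise convergence and uniform $L^\infty$ bounds of $\divg^{r_n}\vp$, and then combining dominated convergence with the weak $L^p$ convergence of $u_n$.

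For Statement~2, however, your handling of the integer-limit case $r\in\N$ has a genuine gap. You claim that among the intermediate orders $\sigma_n+l$, $l=0,\ldots,\lfloor r_n\rfloor-1$, furnished by Lemma~\ref{1d_approx_up}, ``at least one\ldots stays in a fixed neighborhood of, say, $1/2$''. This is false precisely in the critical case $r_n\searrow r\in\N$: if $r=1$ and $r_n\searrow 1$, then $\lfloor r_n\rfloor=1$, the only intermediate order is $\sigma_n=r_n-1\searrow 0$, and no order stays near $1/2$; for $r=2$ and $r_n\searrow 2$ the intermediate orders are $\sigma_n\to 0$ and $\sigma_n+1\to 1$, again missing any fixed value in $(0,1)$. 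So Proposition~\ref{bergounioux2017fractional} cannot be invoked at a fixed $s_\ast$, and Theorem~\ref{ATV_real_embedding} is unavailable along this route.

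The paper treats exactly this degenerate case $s_n\searrow 0$ (equivalently $r_n\searrow \lfloor r\rfloor$) by a different mechanism. After smoothing via Proposition~\ref{approx_smooth}, it observes that $TV_{\ell^1}(\nabla^{s_n}u_n)\le TV_{\ell^1}^{1+s_n}(u_n)$ and $\|\nabla^{s_n}u_n\|_{L^1}\le C\|u_n\|_{BV^{1+s_n}}$, so $\{\nabla^{s_n}u_n\}$ is bounded in the \emph{classical} $BV(Q)$ and hence $L^1$-precompact. The remaining step is to show $\|u_n-\nabla^{s_n}u_n\|_{L^1}\to 0$, which is done via the representation of Theorem~\ref{thm_MR3144452}: one writes $w_n=c_{0,n}\Gamma(s_n)^{-1}t^{s_n-1}+c_{1,n}\Gamma(s_n+1)^{-1}t^{s_n}+\mathbb I^{1+s_n}\phi_n$, and the assumption $u_n\in IM(Q)$ (bounded trace) forces $c_{0,n}=0$, after which the strong continuity of the semigroup $\mathbb I^r$ (Theorem~\ref{thm_MR1347689}) yields the claim. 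This use of $IM(Q)$ to kill the singular term $t^{s_n-1}$ is the missing idea in your plan; without it, the case $r_n\searrow r\in\N$ does not close.
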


\begin{proof} 
	Write $r_n=\lfloor r_n\rfloor+s_n$ for $s_n\in[0,1)$. By \eqref{rn_uniform_uppd}, there exists $u\in L^p(Q)$ such that, up to a sub-sequence,
	\be\label{weak_r_lp}
	u_n\wto u\text{ weakly in }L^p(Q).
	\ee
	Then Statement 1 can be proved by using the same arguments from Proposition \ref{compact_lsc_s}. \\\\
	We next prove Statement 2. We only study the case of $1\leq r_n\leq 2$, or equivalently $\ir=1$, as the case in which $\ir\geq 2$ can be dealt analogously. Again by applying Proposition \ref{zhendexiao_le} we have 
	\begin{align}
		\sup_n\norm{u_n}_{BV^{s_n}(Q)} &\leq C\sup_n\norm{u_n}_{BV^{r_n}(Q)}
		\leq \sup_n\{\norm{u_n}_{L^p(Q)}+TV_{\ell^1}^{r_n}(u_n)\}=:M<+\infty,
		\label{jiangweigongji}
	\end{align}
	where $C>0$, obtained from Proposition \ref{zhendexiao_le}, is a constant independent of $r_n$. Assume in addition that there exists $\e>0$ such that $\seqn{s_n}\subset[\e,1]$. Then, in view of Proposition \ref{compact_lsc_s}, there exists $u\in L^1(Q)$ 
	such that, up to a sub-sequence, that $u_n\to u$ strongly in $L^1(Q)$, which gives Statement 2.\\\\
	We now deal with the situation that $s_n\searrow 0$, that is, $r_n\searrow 1$. In this case, although \eqref{jiangweigongji} still holds, Proposition \ref{compact_lsc_s} does not produce a sub-sequence
	strongly converging in $L^1(Q)$. We proceed by using Theorem \ref{approx_smooth} to relax $u_n$, for each $n\in\N$, such that $u_n\in BV^{1+s_n}(Q)\cap C^\infty(Q)$. Hence, we have, for arbitrary $\vp\in C_c^\infty(Q,\R^2)$, that (recall \eqref{RLFODr})
	\be
	-\int_Q \nabla^{s_n} u_n\,\divg \vp\,dx=\int_Q u_n[\divg^{s_n}\divg] \vp\,dx=\int_Q u_n\divg^{1+s_n}\vp\,dx\leq TV_{\ell^1}^{1+s_n}(u_n),
	\ee
	which implies 
	\be\label{ds_tv_eq}
	TV_{\ell^1}(\nabla^{s_n}u_n)\leq TV_{\ell^1}^{1+s_n}(u_n)\leq M<+\infty.
	\ee
	Moreover, by Proposition \ref{zhendexiao_le}, 
	\be\label{ds_tv_eq2}
	\norm{\nabla^{s_n}u_n}_{L^1(Q)} = TV_{\ell^1}^{s_n}(u_n)\leq C \norm{u_n}_{BV^{1+s_n}(Q)}\leq M<+\infty.
	\ee
	We claim 
	\be\label{un_dsn_go}
	\limsup_{n\to\infty}\norm{u_n-\nabla^{s_n}u_n}_{L^1(Q)}=0.
	\ee
	We start from the { one-dimensional} case, i.e. $Q=I=(0,1)$, and use $w_n$ to represent $u_n$. 
	By Theorem \ref{thm_MR3144452}, for each $n\in\N$, there exists $\phi_n(t)\in L^1(I)$ such that
	\be
	w_n(t)= \frac{c_{0,n}}{\Gamma(s_n)}t^{s-1}+ \frac{c_{1,n}}{\Gamma(s_n+1)}t^{s_n}+\mathbb I^{1+s_n} \phi_n(t)
	\ee
	for a.e. $t$, and 
	\be
	d^{s_n} w_n(t) = c_{1,n} + \mathbb I^1\phi_n(t).
	\ee
	Next, in view of \eqref{r_int_frac_def}, we have, for each $n\in\N$ fixed, that 
	\be
	\mathbb I^{1+s}\phi_n(t) = \frac{1}{\Gamma(1+s)}\int_0^t \frac{\phi_n(z)}{\fsp{t-z}^{1-(1+s)}}dz=\frac{1}{\Gamma(1+s)}\int_0^t {\phi_n(z)}\fsp{t-z}^s dz.
	\ee
	That is, we have
	\be
	\lim_{t\to 0}\abs{\mathbb I^{1+s}\phi_n(t) }<+\infty\text{, for each }n\in\N.
	\ee
	However, $t^{s-1}\to +\infty$ as $t\to 0^+$. Hence, $c_{0,n}=0$ must hold, 
	as the opposite gives $w_n(t)\to+\infty$ as $t\to 0$, which contradicts our assumption $w_n\in IM(Q)$. Thus, we have
	\be
	w_n(t)= \frac{c_{1,n}}{\Gamma(s+1)}t^{s}+\mathbb I^{1+s} \phi_n(t),\qquad \text{for a.e. }t.
	\ee
	Then, direct computation gives
	\begin{align}\label{upper_un_dsn}
		\begin{split}
			\norm{w_n-d^{s_n}w_n}_{L^1(Q)} &= \norm{\frac{c_{1,n}}{\Gamma(s_n+1)}t^{s_n}-c_{1,n}+\mathbb I^{1+s_n}\phi_n-\mathbb I^1\phi_n}_{L^1(Q)}\\
			&\leq \norm{\frac{c_{1,n}}{\Gamma(s_n+1)}t^{s_n}-c_{1,n}}_{L^1(Q)}+\norm{\mathbb I^{1+s_n}\phi_n-\mathbb I^1\phi_n}_{L^1(Q)}\\
			&\leq \abs{c_{1,n}}\abs{\frac1{\Gamma(s_n+1)(s_n+1)}-1}+\abs{\mathbb I^{1+s_n}-\mathbb I^1}\norm{\phi_n}_{L^1(Q)}.
		\end{split}
	\end{align}
	Moreover, using the same argument from the proof of \eqref{const_upper_bdd}, we have 
	\be
	\abs{c_{1,n}}+\norm{\phi_n}_{L^1(Q)}\leq 2\fmp{\norm{d^{s_n}w_n}_{L^1(Q)}+\norm{d^{s+n+1}w_n}_{L^1(Q)}}.
	\ee
	This, combined with \eqref{upper_un_dsn}, implies that 
	\begin{align*}
		&\|u_n-\nabla^{s_n}u_n\|_{L^1(Q)} = \int_0^1\norm{u_n\lfloor_{x_1}-d^{s_n}u_n\lfloor_{x_1}}_{L^1(Q)}dx_1\\
		&\leq  2\fmp{\abs{\frac1{\Gamma(s_n+1)(s_n+1)}-1}+\abs{\mathbb I^{1+s_n}-\mathbb I^1}}
		\int_0^1\fmp{\norm{d^{s_n}u_n\lfloor_{x_1}}_{{ L^1(Q)}}
			+\norm{d^{s+n+1}u_n\lfloor_{x_1}}_{{ L^1(Q)}}}dx_1\\
		&\leq  2\fmp{\abs{\frac1{\Gamma(s_n+1)(s_n+1)}-1}+\abs{\mathbb I^{1+s_n}-\mathbb I^1}}\norm{u_n}_{BV^{r_n}(Q)}.
	\end{align*}
	By Theorem \ref{thm_MR1347689}, Assertion \ref{semigroup_frac_int}, we have $\abs{\mathbb I^{s_n+1}-\mathbb I^1}\to 0$, as $s_n\to 0$, and hence we conclude \eqref{un_dsn_go}. Next, by \eqref{ds_tv_eq} and \eqref{ds_tv_eq2}, and the compact embedding in standard $BV(Q)$ space, up to a sub-sequence, there exists $\bar u\in BV(Q)$, that 
	\be
	\nabla^{s_n}u_n\to \bar u\text{ strongly in }L^1(Q).
	\ee
	Hence, by \eqref{un_dsn_go}, we have 
	\be
	\norm{u_n-\bar u}_{L^1(Q)}\leq \norm{u_n- \nabla^{s_n}u_n}_{L^1(Q)}+\norm{\nabla^{s_n}u_n-\bar u}_{L^1(Q)}\to 0,
	\ee
	that is, we have $u_n\to \bar u$ strongly in $L^1(Q)$. 
	Finally, in view of \eqref{weak_r_lp}, we have $u=\bar u$, and hence the thesis.
\end{proof}

We conclude this section by proving the second main result of this article.

\begin{theorem}[lower semi-continuity and compact embedding in $TV^r_\ellp$ semi-norms]\label{compact_lsc_r_tv}
	Given sequences $\seqn{r_n}\subset \R^+$, $\seqn{p_n}\subset [1,+\infty]$, and $\seqn{u_n}\subset L^1(Q)$ such that $r_n\to r\in\R^+\cup\flp{0}$ and $p_n\to p\in[1,+\infty]$,
	and there exists $q\in(1,+\infty]$ such that 
	\be\label{uniform_tv_upperbdd}
	\sup\flp{\norm{u_n}_{L^q(Q)}+TV_{  \ell^{p_n} }^{r_n}(u_n):\,\,n\in\N}<+\infty,
	\ee
	then, the following statements hold.
	\begin{enumerate}[1.]
		\item
		There exists $u\in BV^r(Q)$ such that, up to a sub-sequence, $u_n\wto u$ weakly in $L^q(Q)$ and 
		\be
		\liminf_{n\to\infty} TV_{\ell^{p_n}}^{r_n}(u_n)\geq TV_\ellp^r(u).
		\ee
		\item
		Assuming in addition that ${u_n}\subset IM(Q)\cap SV^{r_n}(Q)$ and $r_n\to r>0$, we have 
		\be
		u_n\to u\text{ strongly in }L^1(Q).
		\ee
	\end{enumerate}
\end{theorem}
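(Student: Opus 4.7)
The plan is to reduce Theorem~\ref{compact_lsc_r_tv} to the already-proven Proposition~\ref{compact_lsc_r} by exploiting the equivalence of the $TV^r_{\ell^p}$ semi-norms across $p$. Applying Lemma~\ref{TVlpq_lemma} with $q=1$ gives $TV^{r_n}_{\ell^1}(u_n)\le N^{1-1/p_n}\,TV^{r_n}_{\ell^{p_n}}(u_n) \le N\cdot TV^{r_n}_{\ell^{p_n}}(u_n)$, so the hypothesis \eqref{uniform_tv_upperbdd} transfers immediately to a uniform bound on $\|u_n\|_{L^q(Q)}+TV^{r_n}_{\ell^1}(u_n)$. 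Proposition~\ref{compact_lsc_r} then extracts a subsequence and a limit $u\in BV^r(Q)$ with $u_n\rightharpoonup u$ weakly in $L^q(Q)$; under the extra hypotheses of Statement~2 (namely $u_n\in SV^{r_n}(Q)\cap IM(Q)$ and $r_n\to r>0$) the same proposition upgrades this to strong $L^1(Q)$-convergence. This delivers Statement~2 at once, so only the lower-semicontinuity inequality of Statement~1 requires a fresh argument.

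For Statement~1 my approach is a direct test-function computation. Fix $\varphi\in C_c^\infty(Q;\M^{N\times N^{\lfloor r\rfloor}})$ with $|\varphi|^*_{\ell^p}<1$ strictly. Since $p_n\to p$, the dual exponent satisfies $p_n^*\to p^*$, and on the compact support of $\varphi$ the map $q\mapsto |\varphi_i(x)|_{\ell^q}$ is continuous uniformly in $(i,x)$; thus $|\varphi|^*_{\ell^{p_n}}\le 1$ for $n$ large, making $\varphi$ admissible in the definition of $TV^{r_n}_{\ell^{p_n}}$. Assuming for the moment that $r\notin\N$, for $n$ large $\lfloor r_n\rfloor=\lfloor r\rfloor$ and $s_n\to s$, so Lemma~\ref{linftyds} supplies a uniform $L^\infty$-bound for $\operatorname{div}^{s_n}[\operatorname{div}^{\lfloor r\rfloor}\varphi]$ and a.e.\ convergence to $\operatorname{div}^s[\operatorname{div}^{\lfloor r\rfloor}\varphi]$. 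Combining this with the weak $L^q$-convergence of $u_n$ and the H\"older-type estimate used in \eqref{need_for_tv_case} yields
$$\liminf_{n\to\infty} TV^{r_n}_{\ell^{p_n}}(u_n) \;\ge\; \int_Q u\,\operatorname{div}^s[\operatorname{div}^{\lfloor r\rfloor}\varphi]\,dx;$$
taking the supremum over such $\varphi$ (and passing from the strict constraint $<1$ to $\le 1$ by rescaling $\varphi \mapsto (1-\delta)\varphi$ and sending $\delta \to 0^+$) gives Statement~1 in this generic case.

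The main technical obstacle will be the integer-order limits $r\in\N\cup\{0\}$. When $r\ge 1$ is an integer, $r_n$ may approach $r$ from both sides, so $\lfloor r_n\rfloor$ oscillates between $r-1$ and $r$ and $s_n$ oscillates between near-$1$ and near-$0$; the natural fix is to split into two subsequences, apply the argument above to each, and use Lemma~\ref{linftyds}(2) to check that $\operatorname{div}^{s_n}[\operatorname{div}^{\lfloor r_n\rfloor}\varphi]$ converges a.e.\ to the same integer-order limit in both subsequences, with the normalization factor from \eqref{scaled_divg} performing the bookkeeping. The case $r=0$ in Statement~1 is handled by interpreting $TV^0_{\ell^p}(u)$ through Proposition~\ref{bergounioux2017fractional} as (a constant multiple of) $\|u\|_{L^1(Q)}$ and invoking the standard weak-$L^q$ lower semi-continuity of the $L^1$-norm. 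The endpoint exponents $p\in\{1,\infty\}$ require no separate treatment since the dual-norm convergence $|\varphi|^*_{\ell^{p_n}}\to |\varphi|^*_{\ell^p}$ is continuous across the full range $[1,\infty]$.
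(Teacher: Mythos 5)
Your proposal is correct and follows essentially the same route as the paper: transfer the uniform bound \eqref{uniform_tv_upperbdd} to a fixed underlying norm via the equivalence of the $TV^{r}_{\ell^p}$ semi-norms, then invoke the fixed-norm compactness and lower-semicontinuity machinery (the paper's one-line proof cites Remark \ref{an_iso_equ}, \eqref{need_for_tv_case} and Theorem \ref{weak_star_comp_s}, which is exactly the content of Proposition \ref{compact_lsc_r} that you use). You are in fact more careful than the published argument on two points it silently glosses over --- the admissibility of a fixed test function under the varying constraint $|\varphi|^{*}_{\ell^{p_n}}\le 1$, which you handle by the strict-inequality-plus-rescaling device together with the continuity of $p\mapsto|\varphi|^{*}_{\ell^{p}}$, and the oscillation of $\lfloor r_n\rfloor$ when $r$ is an integer --- so your write-up supplies details the paper omits rather than diverging from its method.
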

\begin{proof}
	By applying Remark \ref{an_iso_equ}, we deduce that 
	\eqref{uniform_tv_upperbdd} implies \eqref{rn_uniform_uppd}, 
	and the thesis follows by combining \eqref{need_for_tv_case} and Theorem \ref{weak_star_comp_s}.
\end{proof}

\section*{Acknowledgements}
Xin Yang Lu acknowledges the support of NSERC Grant 
{\em ``Regularity of minimizers and pattern formation in geometric minimization problems''},
and of the Startup funding, and Research Development Funding
of Lakehead University.
The authors are grateful to Todd J. Falkenholt, and the NSERC USRA grant supporting him,
for many useful comments and suggestions.

{
	\bibliographystyle{siam}
	\bibliography{RTV_theory_Part1}
}

\end{document}